\numberwithin{equation}{section} 
\newtheorem{theorem}{Theorem}[section]
\newtheorem{lem}[theorem]{Lemma}
\newtheorem{cor}[theorem]{Corollary}
\newtheorem{prop}[theorem]{Proposition}
\theoremstyle{remark}
\newtheorem{rem}[theorem]{Remark}
\newcommand{\R}{\mathbb{R}}
\newcommand{\N}{\mathbb{N}}
\newcommand{\Ltwoa}{L^2(\Omega)}
\newcommand{\Ltwoh}{L^2(\R^3_+)}
\newcommand{\Ltwohdom}{L^2(G)}
\newcommand{\Ltwohn}[1]{\|#1\|_{\Ltwoh}}
\newcommand{\Ltwoan}[1]{\|#1\|_{L^2_\gamma(\Omega)}}
\newcommand{\Ltwoanwgamma}[1]{\|#1\|_{L^2(\Omega)}}
\newcommand{\Ltwohnt}[1]{\Gnorm{0}{#1}}
\newcommand{\Ltwodomh}{L^2(G)}
\newcommand{\Ltwodomhn}[1]{\|#1\|_{\Ltwodomh}}
\newcommand{\Hh}[1]{H^{#1}(\R^3_+)}
\newcommand{\Hhdom}[1]{H^{#1}(G)}
\newcommand{\Ha}[1]{H^{#1}(\Omega)}
\newcommand{\Hadom}[1]{H^{#1}(J \times G)}
\newcommand{\Hagamma}[1]{H^{#1}_\gamma(\Omega)}
\newcommand{\Hhta}[1]{H_{\operatorname{ta}}^{#1}(\R^3_+)}
\newcommand{\Hata}[1]{H_{\operatorname{ta}}^{#1}(\Omega)}
\newcommand{\Hatan}[2]{\|#2\|_{H_{\operatorname{ta},\gamma}^{#1}(\Omega)}}
\newcommand{\Hhtan}[2]{\|#2\|_{H_{\operatorname{ta}}^{#1}(\R^3_+)}}
\newcommand{\Hhn}[2]{\|#2\|_{\Hh{#1}}}
\newcommand{\Hhndom}[2]{\|#2\|_{\Hhdom{#1}}}
\newcommand{\Hangamma}[2]{\|#2\|_{\Hagamma{#1}}}
\newcommand{\Hangammadom}[2]{\|#2\|_{H^{#1}_\gamma(J \times G)}}
\newcommand{\G}[1]{G_{#1}(\Omega)}
\newcommand{\GPrime}[1]{G_{#1}(\Omega')}
\newcommand{\Gvar}[1]{\tilde{G}_{#1}(\Omega)}
\newcommand{\Ggamma}[1]{G_{#1, \gamma}(\Omega)}
\newcommand{\Gnorm}[2]{\|#2\|_{G_{#1, \gamma}(\Omega)}}
\newcommand{\Gnormdom}[2]{\|#2\|_{G_{#1, \gamma}(J \times G)}}
\newcommand{\Gnormdomwg}[2]{\|#2\|_{G_{#1}(J \times G)}}
\newcommand{\GnormPrime}[2]{\|#2\|_{G_{#1, \gamma}(\Omega')}}
\newcommand{\Gdom}[1]{G_{#1}(J \times G)}
\newcommand{\Gdomvar}[1]{\tilde{G}_{#1}(J \times G)}
\newcommand{\Gdomnorm}[2]{\|#2\|_{G_{#1, \gamma}(J \times G)}}
\newcommand{\Gdomnormwg}[2]{\|#2\|_{G_{#1}(J \times G)}}
\newcommand{\F}[1]{F_{#1}(\Omega)}
\newcommand{\Fpdw}[2]{F_{#1,#2}(\Omega)}
\newcommand{\Fuwl}[2]{F_{#1}^{\operatorname{#2}}(\Omega)}
\newcommand{\Fupdwl}[3]{F_{#1,#3}^{\operatorname{#2}}(\Omega)}
\newcommand{\Fk}[2]{F_{#1,#2}(\Omega)}
\newcommand{\Fpdk}[3]{F_{#1,#3,#2}(\Omega)}
\newcommand{\Fuwlk}[3]{F_{#1,#3}^{\operatorname{#2}}(\Omega)}
\newcommand{\Fnorm}[2]{\|#2\|_{F_{#1}(\Omega)}}
\newcommand{\Fdom}[1]{F_{#1}(J \times G)}
\newcommand{\Fdomk}[2]{F_{#1,#2}(J \times G)}
\newcommand{\Fdompdw}[2]{F_{#1,#2}(J \times G)}
\newcommand{\Fdomuwl}[2]{F_{#1}^{\operatorname{#2}}(J \times G)}
\newcommand{\Fdomupdwl}[3]{F_{#1,#3}^{\operatorname{#2}}(J \times G)}
\newcommand{\Fdompdk}[3]{F_{#1,#3,#2}(J \times G)}
\newcommand{\Fdomuwlk}[3]{F_{#1,#3}^{\operatorname{#2}}(J \times G)}
\newcommand{\Fdomupdwlk}[4]{F_{#1,#4,#3}^{\operatorname{#2}}(J \times G)}
\newcommand{\Fdomnorm}[2]{\|#2\|_{F_{#1}(J \times G)}}
\newcommand{\Fvardom}[1]{F^0_{#1}(G)}
\newcommand{\Fvarwkdom}[2]{F^0_{#1,#2}(G)}
\newcommand{\Fcoeff}[2]{F_{#1, \operatorname{coeff}}^{\operatorname{#2}}(\R^3_+)}
\newcommand{\Fvarnorm}[2]{\|#2\|_{F^0_{#1}(\R^3_+)}}
\newcommand{\Fvarnormdom}[2]{\|#2\|_{F^0_{#1}(G)}}
\newcommand{\Fnormdom}[2]{\|#2\|_{F_{#1}(J \times G)}}
\newcommand{\E}[1]{E_{#1}(J \times \partial \R^3_+)}
\newcommand{\Edom}[1]{E_{#1}(J \times \partial G)}
\newcommand{\Enorm}[2]{\|#2\|_{E_{#1, \gamma}(J \times \partial \R^3_+)}}
\newcommand{\Enormwgdom}[2]{\|#2\|_{E_{#1}(J \times \partial G)}}
\newcommand{\Enormdom}[2]{\|#2\|_{E_{#1, \gamma}(J \times \partial G)}}
\newcommand{\curl}{\operatorname{curl}}
\newcommand{\Div}{\operatorname{Div}}
\renewcommand{\div}{\operatorname{div}}
\newcommand{\Tr}{\operatorname{Tr}}
\newcommand{\tr}{\operatorname{tr}}
\newcommand{\Hhtanw}[3]{\|#2\|_{H^{#1}_{\operatorname{ta}, #3}(\R^3_+)}}
\newcommand{\Hhtanwt}[3]{\|#2\|_{L^2_\gamma(J,H^{#1}_{\operatorname{ta}, #3}(\R^3_+))}}
\newcommand{\clJ}{\overline{J}}
\newcommand{\dist}{\operatorname{dist}}
\newcommand{\supp}{\operatorname{supp}}
\renewcommand{\epsilon}{\varepsilon}
\renewcommand{\phi}{\varphi}
\title[Regularity theory for Maxwell equations]{Regularity theory for nonautonomous Maxwell equations with perfectly conducting boundary conditions}
\author{Martin Spitz}
\address[Martin Spitz]{Department of Mathematics, Karlsruhe Institute of Technology, Englerstr. 2, 76131 Karlsruhe}
\email{martin.spitz@kit.edu}
\subjclass[2010]{35L50, 35Q61}
\keywords{Maxwell equations, perfectly conducting boundary conditions, hyperbolic system, initial boundary 
value problem, characteristic 
boundary, a priori estimates, regularity theory}
\begin{document}
\begin{abstract}
 In this work we study linear Maxwell equations with time- and space-dependent matrix-valued 
 permittivity and permeability
 on domains with a perfectly conducting boundary. This leads to an initial boundary value problem 
 for a first order hyperbolic system with characteristic boundary. We prove a priori estimates for solutions in 
 $H^m$. Moreover, we show the existence of a unique $H^m$-solution if the coefficients 
 and the data are accordingly regular and
 satisfy certain compatibility conditions. Since the boundary is characteristic for the 
 Maxwell system, we have to exploit the divergence conditions in the Maxwell equations in order 
 to derive the energy-type $H^m$-estimates. The combination of these estimates with several regularization 
 techniques then yields the existence of solutions in $H^m$.
\end{abstract}
 \maketitle

\section{Introduction and main result}
\label{Introduction}
The Maxwell equations are the mathematical formulation of the theory of electromagnetism and therefore 
one of the most significant partial differential equations in physics.
In this paper we establish a detailed regularity theory in the case of nonautonomous linear material 
laws and perfectly conducting boundary conditions. 
Such results are known in the autonomous case, where e.g. semigroup methods can be applied. For the 
nonautonomous problem one only has satisfactory results in the full space case~\cite{KatoLinearI, KatoLinearII} 
or for other (absorbing) boundary conditions~\cite{CagnolEller, MajdaOsher, PicardZajaczkowski}. 
The general theory of symmetric hyperbolic systems merely yields partial regularity 
results~\cite{Gues, MajdaOsher, Rauch85}. In this article we obtain a full regularity theory 
using the special structure of Maxwell's equations. Based on these results, in the companion paper~\cite{SpitzQuasilinearMaxwell} 
we develop a complete local wellposedness theory for quasilinear Maxwell equations in~$H^3$, which 
so far was only known for the full space case, see~\cite{KatoQuasilinear}.

In the presence of a 
linear heterogeneous anisotropic medium, the macroscopic Maxwell equations in a domain $G$ read 
\begin{align}
\label{EquationMaxwellSystem}
\begin{aligned}
 \partial_t (\epsilon \boldsymbol{E}) &= \curl \boldsymbol{H} - (\sigma \boldsymbol{E} + \boldsymbol{J}), \qquad  &&\text{for } x \in G, \quad  &&t \in (t_0,T), \\
 \partial_t (\mu \boldsymbol{H}) &= -\curl \boldsymbol{E},  	 &&\text{for } x \in G, &&t \in (t_0,T),  \\
 \div (\epsilon \boldsymbol{E}) &= \rho,  \quad \div (\mu \boldsymbol{H}) = 0, &&\text{for } x \in G, &&t \in (t_0,T),  \\
 \boldsymbol{E} \times \nu &= 0, \quad (\mu \boldsymbol{H}) \cdot \nu = 0, &&\text{for } x \in \partial G, &&t \in (t_0,T),  \\
 \boldsymbol{E}(t_0) &= \boldsymbol{E}_0, \quad \boldsymbol{H}(t_0) = \boldsymbol{H}_0, &&\text{for } x \in G,
\end{aligned}
 \end{align}
for an initial time $t_0 \in \R$. Here $\boldsymbol{E}(t,x) \in \R^3$ and $\boldsymbol{H}(t,x) \in \R^3$ denote the electric 
respectively magnetic field. The conductivity $\sigma(t,x) \in \R^{3 \times 3}$ and current density
$\boldsymbol{J}(t,x) \in \R^3$ are given. The charge density $\rho(t,x)$ depends on the current and 
the electric field via
\begin{equation*}
 \rho(t) = \div(\epsilon(t_0) \boldsymbol{E}_0) - \int_{t_0}^t \div(\sigma \boldsymbol{E} + \boldsymbol{J})(s) ds
\end{equation*}
for all $t \geq t_0$. We further assume that the permittivity $\epsilon(t,x) \in \R^{3 \times 3}$ 
and permeability $\mu(t,x) \in \R^{3 \times 3}$ are symmetric and uniformly positive definite on $(t_0,T) \times G$.
In~\eqref{EquationMaxwellSystem} we have equipped the Maxwell system with the boundary conditions of a 
perfect conductor, where $\nu$ denotes the outer normal unit vector of $G$. 
In order to write the Maxwell equations~\eqref{EquationMaxwellSystem} in the standard form of first 
order systems, we introduce the matrices
\begin{align*}
 J_1 = \begin{pmatrix}
	  0 &0 &0 \\
	  0 &0 &-1 \\
	  0 &1 &0
       \end{pmatrix},
        \quad
 J_2 =  \begin{pmatrix}
         0 &0 &1 \\
         0 &0 &0 \\
         -1 &0 &0
        \end{pmatrix},
        \quad
 J_3 = \begin{pmatrix}
        0 &-1 &0 \\
        1 &0 &0 \\
        0 &0 &0
       \end{pmatrix}
\end{align*}
and
\begin{align}
\label{EquationDefinitionOfAj}
 A_j^{\operatorname{co}} = \begin{pmatrix}
        0 & -J_j \\
        J_j &0 
       \end{pmatrix}
\end{align}
for $j = 1, 2, 3$. Note that $\sum_{j = 1}^3 J_j \partial_j = \curl$. Setting
\begin{equation}
\label{EquationFormForCoefficientsAndInhomogeneityForMaxwell}
 A_0 = \begin{pmatrix}
        \epsilon &0 \\
        0 &\mu
       \end{pmatrix},
\quad 
 D = \begin{pmatrix}
      \partial_t \epsilon + \sigma &0 \\
      0 &\partial_t \mu
     \end{pmatrix}, 
    \quad
 f = \begin{pmatrix}
	- \boldsymbol{J} \\ 0  
     \end{pmatrix} 
\end{equation}
and introducing $u = (\boldsymbol{E}, \boldsymbol{H})$ as new variable, we can write the evolutionary part 
of the Maxwell equations~\eqref{EquationMaxwellSystem} as
\begin{equation}
\label{EquationFirstOrderSystemEvolutionPart}
 A_0 \partial_t u + \sum_{j = 1}^3 A_j \partial_j u + D u = f.
\end{equation}
Under mild regularity conditions on the fields and the coefficients, e.g. $\epsilon \boldsymbol{E}, 
\mu \boldsymbol{H} \in C([t_0,T], \Hhdom{1}) \cap C^1([t_0,T], \Ltwohdom)$ and 
$\operatorname{div}(\sigma \boldsymbol{E} + \boldsymbol{J}) \in L^1((t_0,T), \Ltwoh)$, a solution 
$u = (\boldsymbol{E}, \boldsymbol{H})$ of~\eqref{EquationFirstOrderSystemEvolutionPart}
satisfies the divergence conditions in~\eqref{EquationMaxwellSystem} if they hold at the initial time
$t = t_0$. Similarly, the second part of the boundary conditions, i.e., $(\mu \boldsymbol{H}) \cdot \nu = 0$ on 
$(t_0,T) \times \partial G$ is true if $\boldsymbol{E} \times \nu = 0$ on $(t_0,T) \times \partial G$ 
and $(\mu \boldsymbol{H})(t_0) \cdot \nu = 0$ on $\partial G$. We refer to~\cite[Lemma~7.25]{SpitzDissertation} 
for details. Defining the matrix
\begin{equation*}
 B = \begin{pmatrix}
      0 &\nu_3 &-\nu_2 &0 &0 &0 \\
      -\nu_3 &0 &\nu_1 &0 &0 &0 \\
      \nu_2 &-\nu_1 &0 &0 &0 &0 
     \end{pmatrix},
\end{equation*}
we can thus cast the Maxwell system~\eqref{EquationMaxwellSystem} into the first order linear 
initial boundary value problem
\begin{equation}
  \label{IBVPDomain}
\left\{\begin{aligned}
   A_0 \partial_t u + \sum_{j=1}^3 A_j^{\operatorname{co}} \partial_j u + D u  &= f, \quad &&x \in G, \quad &t \in (t_0,T); \\
   B u &= g, \quad &&x \in \partial G, &t \in (t_0,T); \\
   u(t_0) &= u_0, \quad &&x \in G,
\end{aligned}\right.
\end{equation}
with additional conditions for the initial value.
We also consider inhomogeneous boundary conditions here. Besides being of mathematical interest, 
inhomogeneous boundary conditions for the perfect conductor also have physical relevance, see~\cite{DautrayLionsI}.

The goal of this article is to prove a priori estimates for and the existence of regular 
solutions of~\eqref{IBVPDomain}
provided that the coefficients and the data fulfill suitable regularity and
compatibility conditions, see~\eqref{EquationCompatibilityConditionPrecised} below.  Our main Theorem~\ref{TheoremMainResultOnDomain} 
describes quite detailed how the constants in the a priori estimates depend on suitable norms of 
the coefficients. This precise information is crucial for the nonlinear results in~\cite{SpitzQuasilinearMaxwell}.
In view of 
the above observations our results for problem~\eqref{IBVPDomain} directly
transfer to~\eqref{EquationMaxwellSystem}.

Problem~\eqref{IBVPDomain} is a symmetric hyperbolic system with conservative boundary conditions.
Since the classical work of Friedrichs~\cite{Friedrichs} and Lax-Phillips~\cite{LaxPhillips} on 
symmetric hyperbolic boundary value problems with dissipative boundary conditions, a lot of 
progress has been made. We refer to~\cite{BenzoniGavage} and~\cite{Eller} for an overview of 
the state of the art for hyperbolic systems. 

For Lipschitz coefficients, it is known that the system~\eqref{IBVPDomain} has a unique solution in 
$L^2(J \times G)$ if the data satisfy $u_0 \in \Ltwohdom$, $g \in L^2(J, H^{1/2}(\partial G))$, 
and $f \in L^2(J \times G)$, 
see~\cite{Eller}. Here we set $J = (t_0,T)$. Moreover, one has the basic $L^2$-estimate~\eqref{EquationEllerEstimateInL2} 
for the solutions. We start from this result and use a classical strategy. A 
localization procedure transforms the problem to the half-space. In order to derive a 
priori estimates for more regular solutions, one then differentiates in tangential directions 
and applies the basic $L^2$-estimate to these derivatives, as they again solve an initial boundary 
value problem with known initial value, boundary value, and inhomogeneity, see~\cite{Rauch, Rauch85,RauchMassey}.
But this 
procedure does not work for the derivative in normal direction since we cannot control its 
behavior at the boundary. If the boundary 
matrix $A(\nu) = \sum_{j = 1}^d A_j \nu_j$ is regular, one can express the normal derivative of the solution via the equation 
by tangential derivatives of the solution and lower order terms and thus obtains the desired 
full regularity. Even if $A(\nu)$ is singular (the characteristic case), one can recover normal 
from tangential regularity under certain structural conditions on the problem, see e.g.~\cite{MajdaOsher, OhkuboLinear}.
However, these conditions fail for the Maxwell system~\eqref{IBVPDomain} (which is characteristic 
as the boundary matrix $\sum_{j = 1}^3 A_j^{\operatorname{co}} \nu_j$ is singular) 
with perfectly conducting boundary conditions, cf.~\cite{MajdaOsher}. It also seems that 
Kato's approach from~\cite{KatoLinearI, KatoLinearII} cannot be applied here. On the other hand, 
for general symmetric hyperbolic systems a loss of derivatives in normal direction may occur, 
see e.g.~\cite{Gues,MajdaOsher}.

In our paper we use the structure of Maxwell's equations to prove the full regularity of solutions 
of~\eqref{IBVPDomain}. We proceed as indicated above and focus on the  half-space problem on 
$\R^3_+ = \{x \in \R^3 \colon x_3 > 0\}$. 
The main difficulty is to control the derivative in normal direction $\partial_3 u$. 
Although the boundary matrix is not invertible, using the equation we can bound four components
of $\partial_3 u$ by $\partial_1 u$, $\partial_2 u$, $\partial_t u$, and $f$.
The key step is then to prove that the structure of the Maxwell operator allows us to estimate 
the remaining two components. Here we exploit the divergence conditions for the Maxwell operator, 
respectively for a generalized variable coefficient Maxwell operator which arises due to the 
localization. By means of a Gronwall argument, we can then control these two components. 

Also in the regularization procedure the characteristic boundary poses several challenges. 
It is no longer sufficient (as in the noncharacteristic case) to regularize only in tangential 
directions. However, applying a mollifier in normal direction leads to a loss of derivatives across the boundary. 
We overcome this problem by studying a family of spatially restricted problems. The regularity of the 
corresponding solutions then implies the smoothness of the solution of the original problem. 
To derive the regularity in tangential directions we apply classical techniques from~\cite{Hoermander}. 
Here we rely on the structure of the Maxwell operator which allows us to transform the half-space problem 
to an equivalent one with $A_3 = A_3^{\operatorname{co}}$ so that no commutator terms between 
mollifier and $A_3$ appear. For the differentiability in time yet another regularization technique 
is necessary as the a priori estimates do not allow for a mollifier in time. Moreover, these three 
regularization steps have to be subtly intertwined to retrieve the full regularity of the solution. 

Already for the wellposedness in $L^2$ in~\cite{Eller}
coefficients in $W^{1,\infty}(J \times G)$ (constant outside of a compact set) are needed. 
To treat the initial boundary value problem in higher regularity, we require the coefficients 
$A_0$ and $D$ to belong to 
\begin{align*}
 &\Fdomk{m}{k} = \{A \in W^{1,\infty}(J \times G)^{k \times k} \colon  \partial^\alpha A \in L^\infty(J , \Ltwohdom)^{k \times k} 
 \text{ for all } \alpha \in \N_0^4 \\
 &\hspace{18em} \text{with } 1 \leq |\alpha| \leq m\}, \\
 &\Fdomnorm{m}{A} = \max\{\|A\|_{W^{1,\infty}(J \times G)}, \max_{1 \leq |\alpha| \leq m} \|\partial^\alpha A\|_{L^\infty(J, \Ltwohdom)}\},
\end{align*}
where $m \in \N_0$, see Remark~\ref{RemarkAboutMainTheorem} below for the motivation of this particular space. 
The smoothness of time evaluations of these coefficients will be measured in
\begin{align*}
 &\Fvarwkdom{m}{k} = \{A \in L^\infty(G)^{k \times k} \colon \partial^\alpha A \in \Ltwodomh^{k \times k} \text{ for all } \alpha \in \N_0^3  \text{ with }  1\leq |\alpha| \leq m\}, \\
  &\Fvarnormdom{m}{A} = \max\{\|A\|_{L^\infty(G)}, \max_{1 \leq |\alpha| \leq m}\|\partial^\alpha A\|_{\Ltwohdom}\}.
\end{align*}
By $\Fdompdk{m}{\eta}{k}$ we mean those functions $A$ from $\Fdomk{m}{k}$ with 
$A(t,x)^T = A(t,x) \geq \eta$ for all $(t,x) \in J \times G$, by $\Fdomupdwl{m}{cp}{k}$ those which 
are constant outside of a compact subset of $\overline{J \times G}$, and by $\Fdomupdwl{m}{c}{k}$ those 
which have a limit as $|(t,x)| \rightarrow \infty$. Finally, $\Fdomupdwlk{m}{cp}{\eta}{k}$ and $\Fdomupdwlk{m}{c}{\eta}{k}$ 
are defined as the intersection of $\Fdompdk{m}{\eta}{k}$ with $\Fdomupdwl{m}{cp}{k}$ respectively $\Fdomupdwl{m}{c}{k}$.
We only use the parameters $k = 1$ and $k = 6$ in the following.
As it will be clear from the context which 
parameter we consider, we usually drop it from our notation.

The analysis in~\cite{Eller} requires that the boundary values~$g$ belong to $L^2(J, \! H^{1/2}(\partial G))$. 
In higher regularity we thus take $g$ from the spaces
\begin{align*}
 &\Edom{m} = \bigcap_{j = 0}^m H^j(J, H^{m + \frac{1}{2} - j}(\partial G)), \\
 &\Enormwgdom{m}{g} = \max_{0 \leq j \leq m} \|\partial_t^j g\|_{L^2(J, H^{m + 1/2 - j}(\partial G))}.
\end{align*}
We want to show that under suitable assumptions the solutions of~\eqref{IBVPDomain}
belong to
\begin{equation*}
 \Gdom{m} = \bigcap_{j = 0}^m C^j(\clJ, \Hhdom{m-j}).
\end{equation*}
We equip these spaces with the family of time-weighted norms
\begin{equation*}
  \Gnormdom{m}{v} = \max_{0 \leq j \leq m} \| e_{-\gamma} \partial_t^j v \|_{L^\infty(J, \Hhdom{m-j})}
\end{equation*}
for all $\gamma \geq 0$, where $e_{-\gamma}$ denotes the exponential function $t \mapsto e^{-\gamma t}$.
If $\gamma = 0$, we also abbreviate $\|v\|_{G_{m,0}(J \times G)}$ by $\Gnormdomwg{m}{v}$.
Analogously, any time-space norm indexed by $\gamma$ means the usual norm complemented by
the time weight $e_{-\gamma}$.

As soon as we look for solutions in $\Gdom{m}$ (with data $u_0 \in \Hhdom{m}$, $g \in \Edom{m}$, 
and $f \in \Hadom{m}$) 
with $m \geq 1$, we have to note that the time evaluation of $u$ still has a trace on $\partial G$ which equals 
the time evaluation of the trace of $u$ on $\partial G$. In the case $m = 1$, we thus obtain 
$B u_0 = g(t_0)$ as necessary condition for the existence of a $\Gdom{m}$-solution.
For $m > 1$ there are more of these so called compatibility conditions which have to be satisfied. 
We discuss them in detail in Section~\ref{SectionLocalizationAndFunctionSpaces} below.
We can now state our main result.
\begin{theorem}
  \label{TheoremMainResultOnDomain}
  Let $\eta > 0$, $m \in \N_0$, and $\tilde{m} = \max\{m,3\}$. Fix $r \geq  r_0 >0$. Take a domain $G$ 
  with compact $C^{\tilde{m} + 2}$-boundary.
  Choose $t_0 \in \R$, $T' > 0$ and $T \in (0, T')$ and set 
  $J = (t_0, t_0 + T)$. Take coefficients $A_0 \in \Fdomupdwl{\tilde{m}}{c}{\eta}$ and  
  $D \in \Fdomuwl{\tilde{m}}{c}$ with
  \begin{align*}
    &\Fnormdom{\tilde{m}}{A_0} \leq r, \quad \Fnormdom{\tilde{m}}{D} \leq r, \\
    &\max \{\Fvarnormdom{\tilde{m}-1}{A_0(t_0)},\max_{1 \leq j \leq \tilde{m}-1} \Hhndom{\tilde{m}-j-1}{\partial_t^j A_0(t_0)}\} \leq r_0, \\
    &\max \{\Fvarnormdom{\tilde{m}-1}{D(t_0)},\max_{1 \leq j \leq \tilde{m}-1} \Hhndom{\tilde{m}-j-1}{\partial_t^j D(t_0)}\} \leq r_0.
  \end{align*}
  Choose data $f \in \Hadom{m}$, $g \in \Edom{m}$, and $u_0 \in \Hhdom{m}$ 
  such that the tuple $(t_0, A_0, A_1^{\operatorname{co}}, A_2^{\operatorname{co}}, A_3^{\operatorname{co}}, D, B, f, g,u_0)$
  fulfills the compatibility conditions~\eqref{EquationCompatibilityConditionPrecised} of 
  order $m$. 
   
  Then the linear initial boundary value problem~\eqref{IBVPDomain} has a unique solution $u$ in 
  $\Gdom{m}$. Moreover,  there is a number
 $\gamma_m = \gamma_m(\eta, r, T') \geq 1$ such that
 \begin{align}
  &\Gdomnorm{m}{u}^2  \leq (C_{m,0} + T C_m) e^{m C_1 T} \Big(  \sum_{j = 0}^{m-1} \Hhndom{m-1-j}{\partial_t^j f(t_0)}^2 + \Enormdom{m}{g}^2  \nonumber\\
      &\hspace{18em} + \Hhndom{m}{u_0}^2 \Big) + \frac{C_m}{\gamma}  \Hangammadom{m}{f}^2    \nonumber
 \end{align}
 for all $\gamma \geq \gamma_m$, where $C_i = C_i(\eta,  r,T') \geq 1$ and $C_{i,0} = C_{i,0}(\eta,r_0) \geq 1$ 
 for $i \in \{1,m\}$.
 \end{theorem}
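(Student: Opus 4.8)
The proof proceeds in the classical three-step pattern—localize, prove a priori estimates, regularize—but each step must be adapted to the characteristic boundary. First I would reduce the problem on $G$ to a finite family of half-space problems on $\R^3_+$ via a partition of unity subordinate to a cover of $\overline G$ by coordinate charts flattening $\partial G$. Under such a chart the operator $\sum_j A_j^{\operatorname{co}}\partial_j$ becomes a variable-coefficient operator $\sum_j \tilde A_j(x)\partial_j$ with $\tilde A_3$ singular at $x_3=0$ (the boundary stays characteristic), and, crucially, the divergence constraints transform into constraints for a \emph{generalized Maxwell operator}. I would also perform the change of unknown (hinted at in the introduction) that normalizes the normal coefficient to $A_3 = A_3^{\operatorname{co}}$, so that later the tangential mollifier commutes with $\partial_3$-terms up to harmless lower order. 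The upshot of this step is: it suffices to prove the estimate, and existence, for the localized half-space problem, with the constants tracked as asserted.

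\medskip

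\emph{A priori estimate in $\Ha{m}$.} Assume first a solution $u\in\Ga{m}$ exists and derive the bound. The scheme is an induction on $m$. For the base case one has the $L^2$-estimate \eqref{EquationEllerEstimateInL2} from~\cite{Eller}. For the inductive step, apply a tangential derivative $\partial^\beta$ (with $\beta$ involving only $\partial_t,\partial_1,\partial_2$, $|\beta|\le m$): then $\partial^\beta u$ solves the same type of IBVP with inhomogeneity $f_\beta = \partial^\beta f - [\partial^\beta, \text{coeffs}]u$, boundary datum $\partial^\beta g$ (here the normalization to $A_3^{\operatorname{co}}$ matters so that $B$ still only sees tangential components), and initial value obtained by solving the equation for $\partial_t^j u(t_0)$ recursively in terms of the data—this is exactly where the compatibility conditions \eqref{EquationCompatibilityConditionPrecised} enter and produce the terms $\sum_{j}\Hhndom{m-1-j}{\partial_t^j f(t_0)}^2 + \Hhndom{m}{u_0}^2$. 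Applying the basic estimate to $\partial^\beta u$ and summing controls all \emph{tangential} derivatives of order $\le m$. The remaining task—and the heart of the paper—is to bound the purely normal derivatives $\partial_3^k u$. Using the equation, one solves algebraically for the four components of $\partial_3 u$ lying in the range of $A_3^{\operatorname{co}}(\nu)$ in terms of $\partial_1 u,\partial_2 u,\partial_t u$, $Du$, $f$; iterating gives those components of $\partial_3^k u$ in terms of quantities already estimated. For the two missing components one differentiates the (generalized) divergence conditions $\div(\epsilon\bs E)=\rho$, $\div(\mu\bs H)=0$: these express a normal derivative of the "bad" components through tangential derivatives of \emph{all} components plus the already-controlled ones, closing the loop. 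A Gronwall argument in $t$ then absorbs the coupling and yields the stated estimate, with the time-weight $e_{-\gamma t}$ producing the $\frac{C_m}{\gamma}\Hangammadom{m}{f}^2$ term and with $\gamma_m$ chosen large depending on $\eta,r,T'$.

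\medskip

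\emph{Existence via regularization.} To produce a solution in $\Gdom{m}$ I would start from the $L^2$-solution guaranteed by~\cite{Eller} and bootstrap its regularity. Tangential smoothness is obtained by the difference-quotient / Friedrichs-mollifier technique of~\cite{Hoermander}, applied in the $t,x_1,x_2$ variables; the normalization $A_3=A_3^{\operatorname{co}}$ ensures no commutator with the normal term obstructs this. Normal smoothness cannot be gotten by a mollifier in $x_3$ (loss of derivatives across $\partial G$), so instead I would introduce the family of spatially restricted problems on $\{x_3>\delta\}$—where the boundary at $x_3=0$ is no longer seen, the problem is noncharacteristic there, and interior elliptic-type regularity in the normal variable is available—together with uniform-in-$\delta$ bounds coming precisely from the a priori estimate just proved; letting $\delta\downarrow 0$ recovers $\partial_3$-regularity up to the boundary. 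Time regularity needs a separate argument (the estimate does not tolerate a time mollifier): one differences in time and uses the equation to trade time derivatives for spatial ones, invoking the compatibility conditions to control traces at $t_0$. These three regularization procedures must be interleaved—raising regularity one order at a time in a coordinated induction—so that at each stage the quantities fed into the next mollification already lie in the required space. Finally, undoing the localization and patching the local solutions (uniqueness, from the $L^2$-theory, guarantees consistency) gives the global $\Gdom{m}$-solution, and passing the uniform a priori bound to the limit yields the displayed inequality.

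\medskip

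\textbf{Main obstacle.} The genuinely hard point is the control of the two "bad" normal-derivative components: here the general theory of characteristic hyperbolic systems fails, and one must use the specific algebraic structure of the Maxwell curl-operator together with the divergence constraints (in their localized, variable-coefficient form) and a carefully set up Gronwall loop. Getting the constants in this step to depend only on $\eta,r,T'$ (and the initial-time data only through $r_0$), as required for the companion quasilinear paper, is what forces the delicate bookkeeping throughout.
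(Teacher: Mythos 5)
Your outline reproduces the overall architecture of the paper's proof: localization that normalizes $A_3=A_3^{\operatorname{co}}$, an inductive a priori estimate splitting derivatives into tangential ones (apply $\partial^\beta$ and the $L^2$-estimate of~\cite{Eller}) and normal ones (use the cancellation structure of the generalized Maxwell operator and a Gronwall argument, Proposition~\ref{PropositionCentralEstimateInNormalDirection}), and a three-pronged regularization coupled with the localization. That skeleton is right. But there is a genuine gap you do not address. The tangential regularization via the weighted-norm commutator machinery of~\cite{Hoermander} (Lemma~\ref{LemmaPropertiesOfWeightedNorms}) and the time regularization both require coefficients smoother than $\F{\tilde{m}}$ provides, so one must approximate $A_0$ and $D$ by $C^\infty(\overline{\Omega})$-coefficients. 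Replacing the coefficients, however, generically destroys the compatibility conditions~\eqref{EquationCompatibilityConditionPrecised}: the traces $B S_{m,p}$ depend on $A_0$, $D$, and their time derivatives at $t_0$, so after perturbing them they no longer match $\partial_t^p g(t_0)$, and the regularized problem admits no $\G{m}$-solution to which the a priori bound could be applied. The paper resolves this by simultaneously perturbing the initial datum, constructing $u_{0,\epsilon}\to u_0$ in $\Hh{m}$ so that $(t_0,A_{0,\epsilon},\ldots,D_\epsilon,B,f,g,u_{0,\epsilon})$ is compatible of order $m$ for every small $\epsilon$ (Lemma~\ref{LemmaExistenceOfApproximatingSequence}, which rests on inverting $(-A_{0,\epsilon}(t_0)^{-1}A_3)^p$ on the range of $A_3$, Lemma~\ref{LemmaConstructionForCC}). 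Your sketch tacitly assumes compatibility survives the approximation; it does not, and without this construction the bootstrap does not launch.

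Two smaller inaccuracies in the regularization step: the H\"ormander mollifier is applied only in $x_1,x_2$ (you first list $t$ as well, then correctly retract), and the time regularity is not obtained by ``trading time derivatives for spatial ones'' -- it is a contraction-mapping construction of the IBVP formally solved by $\partial_t u$ on short time subintervals, whose time integral is identified with $u$ by $L^2$-uniqueness and then iterated (Lemma~\ref{LemmaBasicRegularityInTime}). Likewise the normal-direction step on $\{x_3>\delta\}$ is not an elliptic interior gain: it is again the hyperbolic estimate of Proposition~\ref{PropositionCentralEstimateInNormalDirection} applied to $M_\epsilon T_\delta\partial^{\alpha'}u$ to show this family is Cauchy in $\G{1}$ as $\epsilon\to 0$, followed by a Fatou-type passage $\delta\to 0$ (Lemma~\ref{LemmaRegularityInNormalDirection}).
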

 Several remarks are in order.
 \begin{rem}
  \label{RemarkAboutMainTheorem}
  \begin{enumerate}
   \item Since in the $L^2$-result in~\cite{Eller} the coefficients belong to $W^{1,\infty}(J \times G)$,
   one may ask why we use coefficients from $\Fdom{m}$ and not from 
   $W^{m,\infty}(J \times G)$. The reason is that the space $\Fdom{m}$ occurs naturally if one applies the above result to 
   quasilinear problems, cf.~\cite{SpitzQuasilinearMaxwell}.
   
   In fact, in Theorem~\ref{TheoremMainResultOnDomain} and the other results in this article 
   we could replace $\Fdom{m}$ by the space consisting of those $W^{1,\infty}(J \times G)$-functions whose 
   derivatives up to order $m$ belong to $L^\infty(J, \Ltwohdom) + L^\infty(J \times G)$. Since the part of 
   the derivatives in 
   $L^\infty(J \times G)$ is much easier to treat, we however concentrate on $\Fdom{m}$ here.
   
   \item The assumption that $G$ has a compact boundary is not necessary. We can also treat more general domains 
   with a 
   uniform $C^{\tilde{m} + 2}$-boundary satisfying some extra properties. See~\cite[Chapter~5]{SpitzDissertation} 
   for details.
   
   \item In~\cite{SpitzQuasilinearMaxwell} we also show the finite speed of propagation of solutions of~\eqref{IBVPDomain}.
  \end{enumerate}
 \end{rem}
 
 The proof of Theorem~\ref{TheoremMainResultOnDomain} proceeds in several steps. In Section~\ref{SectionLocalizationAndFunctionSpaces} 
 we describe the localization procedure which transforms~\eqref{IBVPDomain} into a half-space problem. 
 By an additional transformation we manage to keep the matrix $A_3 = A_3^{\operatorname{co}}$ 
 unchanged, which is a crucial ingredient in the regularization procedure.
 In this preliminary section we further define the aforementioned compatibility conditions 
 and provide crucial properties of the function spaces $\Fdom{m}$. We then start to derive a priori estimates 
 in Section~\ref{SectionAPrioriEstimates}. Differentiating in tangential directions and 
 applying the basic $L^2$-estimate, we obtain bounds for the tangential derivatives of the solution.
 As explained above, the crucial step is to derive an a priori bound for the derivative in normal direction 
 via the properties of the Maxwell operator, which is done in Proposition~\ref{PropositionCentralEstimateInNormalDirection}. 
 An iteration argument then yields the a priori estimates of higher order. In Section~\ref{SectionRegularity} 
 we show that the solution of the initial boundary value problem has the same level of regularity as
 the data and the coefficients, roughly speaking. Analogous to the derivation of 
 the a priori estimates, also the regularization procedure is more difficult than in the 
 noncharacteristic case. We use three different 
 regularization techniques in normal, tangential, and time direction,  which also have to be subtly intertwined, 
 see Lemma~\ref{LemmaRegularityInNormalDirection}, Lemma~\ref{LemmaRegularityInSpaceTangential}, 
 and Lemma~\ref{LemmaBasicRegularityInTime}. For several arguments there we need more regular coefficients. 
 However, approximating only the coefficients violates the compatibility conditions.
 We therefore have to construct smooth coefficients and data which approximate the original ones in 
 suitable spaces and which satisfy the compatibility conditions, see~Lemma~\ref{LemmaExistenceOfApproximatingSequence}. 
 Combining these ingredients we finally obtain the claimed regularity of the solution.
 
 \textbf{Notation:} By $m$ we always mean a nonnegative integer. We further denote 
 the differential operator $A_0 \partial_t + \sum_{j = 1}^3 A_j \partial_j + D$ by 
 $L(A_0, \ldots, A_3, D)$. If it is clear from the context which coefficients we consider, we 
 often suppress the argument and only write $L$ for the differential operator. We also set 
 $\partial_0 = \partial_t$.
 
  We further fix a number $T' > t_0$ and take $T \in (t_0, T']$. We set 
 $J = (t_0,T)$ and $\Omega = J \times \R^3_+$. Due to the translation invariance 
 of~\eqref{IBVPDomain}, we often assume without loss of 
 generality $t_0 = 0$.


\section{Compatibility conditions, function spaces, and localization}
\label{SectionLocalizationAndFunctionSpaces}
In this section we denote by $G$ a domain with a compact $C^{m+2}$-boundary or 
the half-space.
With $\Fdom{m}$ and $\Fvardom{m}$ respectively $\Gdom{m}$ we have identified the function spaces
for the coefficients and the solution of~\eqref{IBVPDomain}. In view of the strategy 
described in the introduction, it is clear that we will need corresponding bilinear estimates
for functions from these spaces. Also the spaces $\Gdomvar{m}$, consisting of those functions $v$ 
with $\partial^\alpha v \in L^\infty(J, \Ltwohdom)$ for all $\alpha \in \N_0^4$ with 
$0 \leq |\alpha| \leq m$, will be useful for the fixed point argument of the nonlinear problem,
cf.~\cite{SpitzQuasilinearMaxwell}.
\begin{lem}
 \label{LemmaRegularityForA0}
  Take  $m_1, m_2 \in \N$ with $m_1 \geq m_2$ and $m_1 \geq 2$ and a parameter $\gamma \geq 0$.
 \begin{enumerate}
  \item \label{ItemProductInG0} Let $k \in \{0,\ldots,m_1\}$, $f \in  \Gdomvar{m_1 - k}$,
      and $g \in \Gdomvar{k}$. Then $f  g \in \Gdomvar{0}$ and
      \begin{align*}
       \Gdomnorm{0}{ f g} \leq C \Gdomnormwg{m_1 - k}{f} \Gdomnorm{k}{g}.
      \end{align*}

  \item \label{ItemProductHkappa}  Let $f \in \Gdomvar{m_1}$ and 
      $g \in \Gdomvar{m_2}$. Then $f g \in \Gdomvar{m_2}$ and 
      \begin{align*}
       \Gdomnorm{m_2}{f g} \leq C \min\{&\Gdomnormwg{m_1}{f} \Gdomnorm{m_2}{g}, \\
       &\Gdomnorm{m_1}{f} \Gdomnormwg{m_2}{g}\}.
      \end{align*}
      The result remains true if we replace $\Gdomvar{m_1}$ by $\Fdom{m_1}$ and if 
      we replace both $\Gdomvar{m_1}$ and $\Gdomvar{m_2}$ 
      by $\Fdom{m_1}$ and $\Fdom{m_2}$.
    
  \item \label{ItemProductInL2}  Let $k \in \{0,\ldots,m_1\}$, $f \in  \Hhdom{m_1 -k}$,
      and $g \in \Hhdom{k}$. Then $f  g \in \Ltwodomh$ and
      \begin{align*}
       \Ltwodomhn{ f g} \leq C \Hhndom{m_1 - k}{f} \Hhndom{k}{g}.
      \end{align*}
      
  \item \label{ItemProductIn0} Let $f \in \Hhdom{m_1}$ and $g \in \Hhdom{m_2}$. 
	 Then $f g \in \Hhdom{m_2}$ and
	 \begin{align*}
	  \Hhndom{m_2}{f g} \leq C \Hhndom{m_1}{f}\Hhndom{m_2}{g}.
	 \end{align*}
	 The result remains true if we replace $\Hhdom{m_1}$ by $\Fvardom{m_1}$.
 \end{enumerate}
 The assertions in~\ref{ItemProductInG0} and~\ref{ItemProductHkappa} remain true if we remove the 
 tildes there.
\end{lem}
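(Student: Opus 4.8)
The plan is to prove this as a collection of product estimates of "Moser type" for functions in mixed $L^\infty$/$L^2$-based Sobolev spaces, reducing everything to the classical Gagliardo--Nirenberg interpolation inequalities on $\R^n$ (here $n=3$ for the spatial slices and $n=4$ for the space-time cube $J\times G$), together with the defining structure of $G_m$ and $F_m$. First I would record the standard Gagliardo--Nirenberg inequality: for $0\le k\le m$ one has $\|\partial^\alpha u\|_{L^{2m/k}} \lesssim \|u\|_{L^\infty}^{1-k/m}\|u\|_{H^m}^{k/m}$ for $|\alpha|=k$, and more generally, for a product $fg$ with derivatives distributed as $|\beta|+|\delta|\le m_2$ and $|\beta|$ falling on $f$, a H\"older split with exponents $2m_1/|\beta|$ and its dual lands the $f$-factor in an $H^{m_1}$-type norm and the $g$-factor in an $H^{m_2}$-type norm. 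The key point throughout is that membership in $\Gdomvar{m}$ (resp.\ $\Fvardom{m}$) means all derivatives of order $1,\dots,m$ lie in $L^\infty(J,\Ltwohdom)$ (resp.\ $\Ltwohdom$) while the function itself is merely bounded; so the interpolation is always between an $L^\infty$-bound and an $L^2$-bound on top derivatives, which is exactly the regime Gagliardo--Nirenberg covers.

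For item~\ref{ItemProductInL2} I would fix $x$-slices (and, in the space-time setting, also the time variable is just another coordinate in the $4$-cube, so nothing changes), apply Leibniz: $\Ltwodomhn{fg}$ only involves $fg$ with no derivatives, so the claim is simply the Sobolev multiplication $H^{m_1-k}\cdot H^{k}\hookrightarrow L^2$ for $n=3$ (or $n=4$), which holds whenever $(m_1-k)+k=m_1\ge \lceil n/2\rceil$, and the hypothesis $m_1\ge 2$ is slightly too weak for $n=4$ at the endpoint, so I would either use that we actually only need $m_1\ge 2$ for the $n=3$ spatial statement and invoke the space-time version only for $m_1\ge m_2$ with enough room, or more cleanly prove the bound by H\"older with Gagliardo--Nirenberg as above — splitting trivially ($k'=0$ on one factor) reduces to $\|fg\|_{L^2}\le \|f\|_{L^\infty}\|g\|_{L^2}$ when one factor has all the smoothness, and Gagliardo--Nirenberg interpolates the intermediate cases. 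Item~\ref{ItemProductIn0} is then obtained from~\ref{ItemProductInL2} by the Leibniz rule: for $|\alpha|\le m_2$, write $\partial^\alpha(fg)=\sum_{\beta\le\alpha}\binom{\alpha}{\beta}\partial^\beta f\,\partial^{\alpha-\beta}g$ and apply~\ref{ItemProductInL2} to each term with $m_1$ replaced by $m_1-|\alpha|+|\beta|\ge m_2-|\alpha|+|\beta|=|\alpha-\beta|+(m_2-|\alpha|)\ge|\alpha-\beta|$, absorbing the lower-order norms into $\Hhndom{m_1}{f}\Hhndom{m_2}{g}$; the statement for $\Fvardom{m_1}$ in place of $\Hhdom{m_1}$ follows identically because the $L^\infty$-part of the $F$-norm substitutes for the $L^\infty$-embedding that one would otherwise extract from $\Hhdom{m_1}$, $m_1\ge 2$, $n=3$.

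Items~\ref{ItemProductInG0} and~\ref{ItemProductHkappa} are the $G$-space analogues, and these follow from the pointwise-in-$t$ versions~\ref{ItemProductInL2} and~\ref{ItemProductIn0} by inserting the weight $e_{-\gamma}$ and taking the supremum over $t\in J$. Concretely, for~\ref{ItemProductHkappa} and $0\le j\le m_2$, I would write $e^{-\gamma t}\partial_t^j(fg)=\sum_{i\le j}\binom{j}{i}(\partial_t^{\,i}f)(e^{-\gamma t}\partial_t^{\,j-i}g)$, take the $\Hhdom{m_2-j}$-norm in space, and apply~\ref{ItemProductIn0} on each spatial slice with $(m_1-i, \ m_2-j)$ in the role of the two regularity indices (note $\partial_t^{\,i}f(t)\in\Hhdom{m_1-i}$ and $e^{-\gamma t}\partial_t^{\,j-i}g(t)\in\Hhdom{m_2-(j-i)}\subset\Hhdom{m_2-j}$, the latter since removing time derivatives only raises spatial regularity), then bound $\|\partial_t^{\,i}f(t)\|_{\Hhdom{m_1-i}}\le\Gdomnormwg{m_1}{f}$ and take $\sup_t$ of the $g$-factor to get $\Gdomnorm{m_2}{g}$; the symmetric split gives the other term of the minimum. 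The replacement of $\Gdomvar{m_1}$ by $\Fdom{m_1}$ uses item~\ref{ItemProductIn0}'s $\Fvardom{m_1}$-variant on slices together with the bound $\|A\|_{W^{1,\infty}(J\times G)}$ controlling the needed $L^\infty$ pieces, and the definition of $\Fdomnorm{m_1}{A}$ absorbs the rest. Item~\ref{ItemProductInG0} is the special case $m_2=0$ of the argument, now distributing a total of $\le m_1$ derivatives as $k$ on one factor. The main obstacle I anticipate is purely bookkeeping: keeping the weight $e_{-\gamma}$ on exactly one factor throughout the Leibniz expansions so that the final estimate has the asserted form with a $\gamma$-independent constant $C$, and checking the endpoint dimension count ($n=4$ for $J\times G$) so that the hypotheses $m_1\ge m_2$, $m_1\ge 2$ really suffice — one resolves this by never using a bare Sobolev embedding but always the Gagliardo--Nirenberg H\"older split, which degrades gracefully and needs no strict inequality. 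The final sentence of the lemma — that~\ref{ItemProductInG0} and~\ref{ItemProductHkappa} hold with the tildes removed, i.e.\ for the genuine $\Gdom{m}=\bigcap_j C^j(\clJ,\Hhdom{m-j})$ spaces — is immediate since $\Gdom{m}\subset\Gdomvar{m}$ with equal norms on the relevant quantities, and products of continuous-in-time functions are continuous in time, so no separate argument is required.
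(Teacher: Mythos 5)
Your proof follows essentially the same route the paper indicates for this lemma: a term-by-term Leibniz expansion of the product combined with H\"older's inequality and Sobolev embedding on spatial slices, with the time-weighted $G$- and $F$-estimates obtained by applying the spatial bounds pointwise in $t$ and taking suprema (the paper defers the full details to the dissertation). Two small remarks: the $n=4$ dimension concern you raise never actually arises, since every estimate in the lemma factors over three-dimensional slices in $G$ as your own reduction of items~\ref{ItemProductInG0} and~\ref{ItemProductHkappa} to~\ref{ItemProductInL2} and~\ref{ItemProductIn0} shows; and in item~\ref{ItemProductHkappa} the rote assignment of indices $(m_1-i,\,m_2-j)$ to item~\ref{ItemProductIn0} can violate its hypothesis $m_1-i\ge 2$ at the endpoint $i$ close to $m_1$, so it is cleanest there to invoke item~\ref{ItemProductInL2} directly for each Leibniz term, using that the total index $m_1+m_2-|\alpha|\ge m_1\ge 2$.
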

The proof relies on a term by term analysis of the derivatives of the products combined 
with an appropriate application of the Sobolev embedding theorem and H{\"o}lder's inequality, 
see~\cite[Lemma~2.22]{SpitzDissertation}.

For the regularity results in Section~\ref{SectionRegularity} we have to apply techniques which only 
work for smooth coefficients. We then need an approximating sequence for 
coefficients in $\F{m}$ with properties strong enough to transfer regularity.
\begin{lem}
 \label{LemmaApproximationOfCoefficients}
 Let $m \in \N$.
 Choose $A \in \F{m}$.
 Then there exists a family $\{A_\epsilon\}_{\epsilon > 0}$ 
 in $C^\infty(\overline{\Omega})$ with
 \begin{enumerate}
  \item \label{ItemDerivativeInFmkc} 
	$\partial^\alpha A_\epsilon \in \F{m}$ for all $\alpha \in \N_0^4$ and $\epsilon > 0$,
  \item \label{ItemApproximatingSequenceEstimate}
	$\|A_\epsilon\|_{W^{1,\infty}(\Omega)} \leq C \|A\|_{W^{1,\infty}(\Omega)}$ and 
	$\|\partial^\alpha A_\epsilon\|_{L^\infty(J, \Ltwoh)} \leq C \Fnorm{m}{A}$ 
	for all multiindices $1 \leq |\alpha| \leq m$ and $\epsilon > 0$,
  \item \label{ItemApproximatingSequenceConvergence}
	$A_\epsilon \rightarrow A$ in $L^\infty(\Omega)$ as $\epsilon \rightarrow 0$, and
  \item \label{ItemApproximatingSequenceConvergenceInZero}
	$A_\epsilon(0) \rightarrow A(0)$ in $L^\infty(\R^3_+)$ and $\partial^\alpha  A$ and $\partial^\alpha A_\epsilon$
	have a 	representative in the space $C(\overline{J},\Ltwoh)$ with
	$\partial^\alpha  A_\epsilon(0) \rightarrow \partial^\alpha  A(0)$ in $\Ltwoh$ as 
	$\epsilon \rightarrow 0$ for all $\alpha \in \N_0^4$ with $0 < |\alpha| \leq m-1$.
 \end{enumerate}
 If $A$ is independent of time, the same is true for $A_\epsilon$ for all $\epsilon > 0$. If $A$ 
 additionally belongs to $\Fuwl{m}{cp}$, $\Fuwl{m}{c}$,  $\Fpdw{m}{\eta}$ for a number
 $\eta > 0$, or the 
 intersection of two of these spaces, then the same is true for $A_\epsilon$ for all $\epsilon > 0$.
\end{lem}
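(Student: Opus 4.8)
The plan is to construct $A_\epsilon$ by a standard mollification in the time–tangential directions combined with a reflection/extension across the boundary $\partial\R^3_+ = \{x_3 = 0\}$ in the normal direction, adapted so that the structural constraints (time-independence, constancy outside a compact set, positive definiteness) are preserved. More precisely, first extend $A$ from $\overline\Omega = \overline J \times \overline{\R^3_+}$ to all of $\R^4$: one may extend in $x_3$ by an even (Sobolev–Stein type) reflection and in $t$ across the endpoints of $J$, obtaining $\tilde A \in W^{1,\infty}(\R^4)$ with $\partial^\alpha \tilde A \in L^\infty(\R;L^2(\R^3))$ for $1 \le |\alpha| \le m$ and with norms controlled by $\|A\|_{W^{1,\infty}(\Omega)}$ and $\Fnorm{m}{A}$. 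Then set $A_\epsilon = \rho_\epsilon * \tilde A$ for a standard mollifier $\rho_\epsilon$ on $\R^4$ (or on $\R^3$ in the time-independent case), and finally restrict back to $\overline\Omega$. Smoothness on $\overline\Omega$, and property \ref{ItemDerivativeInFmkc} (each $\partial^\alpha A_\epsilon$ again lies in $\F{m}$, since differentiating a convolution with a fixed smooth kernel only moves derivatives onto $\rho_\epsilon$ and costs a constant depending on $\epsilon$ but leaves membership in $\F{m}$ intact), are then immediate from basic properties of convolution.

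The estimates in \ref{ItemApproximatingSequenceEstimate} follow from Young's inequality: $\|A_\epsilon\|_{W^{1,\infty}(\Omega)} \le \|\rho_\epsilon\|_{L^1}\|\tilde A\|_{W^{1,\infty}(\R^4)} = \|\tilde A\|_{W^{1,\infty}(\R^4)} \le C\|A\|_{W^{1,\infty}(\Omega)}$, and similarly, for $1 \le |\alpha| \le m$, $\|\partial^\alpha A_\epsilon\|_{L^\infty(J,\Ltwoh)} = \|\rho_\epsilon * \partial^\alpha\tilde A\|_{L^\infty(J,\Ltwoh)} \le \|\partial^\alpha\tilde A\|_{L^\infty(\R,L^2(\R^3))} \le C\Fnorm{m}{A}$; here one uses that convolution in the $x$-variables is bounded on $L^2(\R^3)$ and convolution in $t$ is bounded on $L^\infty(\R)$, applied in the appropriate order. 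The convergences \ref{ItemApproximatingSequenceConvergence} and \ref{ItemApproximatingSequenceConvergenceInZero} are the usual statements that mollification converges to the identity: $A_\epsilon \to A$ in $L^\infty(\Omega)$ because $A$ (being in $W^{1,\infty}$) is uniformly continuous on $\overline\Omega$, hence $\tilde A$ is uniformly continuous on a neighborhood of $\overline\Omega$; and for $0 < |\alpha| \le m-1$, the fact that $\partial^\alpha \tilde A \in L^\infty(\R, L^2(\R^3)) \cap (\text{one more derivative in } L^\infty(\R,L^2))$ — since $|\alpha|+1 \le m$ — gives, via $\partial_t$ of $\partial^\alpha\tilde A$ lying in $L^\infty(\R,L^2)$, a representative in $C(\overline J, \Ltwoh)$, and $\partial^\alpha A_\epsilon(0) = (\rho_\epsilon * \partial^\alpha\tilde A)(0) \to \partial^\alpha\tilde A(0) = \partial^\alpha A(0)$ in $\Ltwoh$. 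Time-independence is obviously preserved if one mollifies only in $x$; the compact-support-of-variation property ($\Fuwl{m}{cp}$) and the limit-at-infinity property ($\Fuwl{m}{c}$) are preserved because convolution with a compactly supported kernel only enlarges the relevant compact set by $\epsilon$ and commutes with taking limits at infinity; and positive definiteness $A \ge \eta$ is preserved because $A_\epsilon(t,x)$ is an average of values $A(t-s,x-y) \ge \eta$, so $A_\epsilon \ge \eta$ as well (symmetry is preserved pointwise under averaging).

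The main obstacle is the interaction between the boundary $\{x_3=0\}$ and the structural conditions: one cannot simply mollify, since $\rho_\epsilon * A$ near the boundary sees values of $A$ from outside $\R^3_+$, so the extension step must be done carefully to retain $W^{1,\infty}$-regularity, the $L^\infty_t L^2_x$-control of derivatives up to order $m$, \emph{and} positive definiteness simultaneously — an even reflection across $x_3=0$ is Lipschitz-continuous and preserves the lower bound $\eta$ and symmetry, but one must check it does not destroy the higher-order $L^2$-bounds; using a higher-order (Stein-type) reflection would give more $x_3$-regularity but could break the pointwise bound $A \ge \eta$, so the even reflection is the right choice here and the verification that it respects all three spaces $\F{m}$, $\Fpdw{m}{\eta}$, $\Fuwl{m}{c}$, $\Fuwl{m}{cp}$ is the technical heart of the argument. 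A secondary subtlety is the restriction of $\epsilon$ to a left-neighborhood of $0$ so that the enlarged supports and time-extensions stay within the fixed reference sets; this is harmless. I would refer to \cite[Lemma~2.22]{SpitzDissertation} or the analogous statement there for the bilinear bookkeeping and concentrate the written proof on the reflection and the preservation of the structural properties.
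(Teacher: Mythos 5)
Your overall strategy (extend across the boundary, mollify, restrict back) is natural, and the bookkeeping you propose for properties \ref{ItemDerivativeInFmkc}--\ref{ItemApproximatingSequenceConvergenceInZero} and for preservation of symmetry and positivity under convex averaging is correct. The gap is in the extension step itself. You flag precisely the right worry -- ``one must check it does not destroy the higher-order $L^2$-bounds'' -- but then conclude that the even reflection passes this check, and it does not once $m\geq 2$. If $\tilde A(t,x_1,x_2,x_3)=A(t,x_1,x_2,|x_3|)$, then $\partial_3 \tilde A(t,x)=\operatorname{sign}(x_3)\,\partial_3 A(t,x_1,x_2,|x_3|)$ has a jump of size $2\,\partial_3 A(t,x_1,x_2,0)$ across $\{x_3=0\}$, so in the sense of distributions
\begin{equation*}
  \partial_3^2\tilde A \,=\, \partial_3^2 A(\,\cdot\,,|x_3|) \,+\, 2\,\partial_3 A(\,\cdot\,,\,\cdot\,,0)\otimes\delta_{\{x_3=0\}},
\end{equation*}
and the singular part is not in $L^\infty(\R,L^2(\R^3))$. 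After convolution with $\rho_\epsilon$, the delta contributes to $\partial_3^2 A_\epsilon$ a term of size $\sim\epsilon^{-1}\,\partial_3 A(\,\cdot\,,\,\cdot\,,0)$ on the strip $\{0<x_3<\epsilon\}$, whose $\Ltwoh$-norm scales like $\epsilon^{-1/2}\|\partial_3 A(\,\cdot\,,\,\cdot\,,0)\|_{L^2(\R^2)}$. For a generic $A\in\F{m}$ the normal trace $\partial_3 A(\,\cdot\,,\,\cdot\,,0)$ does not vanish, so the uniform bound $\|\partial_3^2 A_\epsilon\|_{L^\infty(J,\Ltwoh)}\le C\Fnorm{m}{A}$ of property \ref{ItemApproximatingSequenceEstimate} fails, and with it your Young-inequality argument. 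The same phenomenon occurs at the temporal endpoints if one reflects evenly in $t$. The dilemma you describe (Hestenes/Stein-type reflections control the higher derivatives but, having alternating-sign coefficients, destroy $A\ge\eta$) is therefore a genuine obstruction to any reflection-based extension; ``even reflection is the right choice'' is not a resolution of it.

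The way around it is not to extend across the boundary at all but to translate inward before mollifying: with $T_\delta v(x)=v(x_1,x_2,x_3+\delta)$ and a mollifier $M_\epsilon$ with kernel supported in a ball of radius $\epsilon<\delta$, the restriction of $M_\epsilon T_{2\epsilon} A$ to $\R^3_+$ sees only values of $A$ taken strictly inside $\R^3_+$, so no extension is needed. This is exactly the regularization the paper already uses in the proof of Lemma~\ref{LemmaRegularityInNormalDirection}. Then $\partial^\alpha(M_\epsilon T_{2\epsilon}A)=M_\epsilon T_{2\epsilon}\partial^\alpha A$ for every $\alpha$, translation is an isometry of $L^2$ onto the relevant translated half-space and $M_\epsilon$ is an $L^2$-contraction, so \ref{ItemApproximatingSequenceEstimate} holds with a constant independent of $\epsilon$; convex averaging and translation preserve symmetry, the lower bound $\eta$, time-independence, and constancy outside a ($O(\epsilon)$-enlarged) compact set. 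In the time variable one cannot shift inward from both endpoints of the bounded interval $J$ simultaneously, so one still needs a little more care there (e.g.\ a one-sided kernel, or a Taylor-type extrapolation in $t$ together with a small shift $+c\epsilon I$ to restore $A_\epsilon\ge\eta$); but in the normal direction the reflection in your proposal must be replaced by this translation to make the estimate in \ref{ItemApproximatingSequenceEstimate} hold.
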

The proof again follows standard ideas, see~\cite[Lemma~2.21]{SpitzDissertation} for details.

As indicated in the introduction, we will reduce~\eqref{IBVPDomain} via localization 
to a half-space problem with variable coefficients below. In order to discuss the compatibility 
conditions in a unified framework, we thus consider~\eqref{IBVPDomain} with 
variable coefficients $A_1, A_2, A_3 \in \Fdom{m}$ independent of time for a moment. We further 
fix a positive definite coefficient $A_0 \in \Fdom{m}$, as well as
$D \in \Fdom{m}$, $B \in W^{m+1,\infty}(G)$ and data $f \in \Hadom{m}$, 
$g \in \Edom{m}$, and $u_0 \in \Hhdom{m}$. If~\eqref{IBVPDomain}
has a solution $u$ which belongs to $\Gdom{m}$, then we can 
differentiate the differential equation in~\eqref{IBVPDomain} by Lemma~\ref{LemmaRegularityForA0}
up to $m-1$-times in time to 
obtain that
\begin{equation}
\label{EquationTimeDerivativesOfSolution}
 \partial_t^p u(t) = S_{G,m,p}(t,A_0, A_1,A_2,A_3,D,f,u(t)),
\end{equation}
for all $t \in \clJ$ and $p \in \{0, \ldots, m\}$, where $S_{G,m,p} = S_{G,m,p}(t_0,A_0, A_1,A_2,A_3,D,f,u_0)$ is defined by
\begin{align}
\label{EquationDefinitionSmp}
 &S_{G,m,0} = u_0, \nonumber \\
 &S_{G,m,p}= A_0(t_0)^{-1} \Big(\partial_t^{p-1} f(t_0) 
  - \sum_{j = 1}^3 A_j \partial_j S_{G,m, p-1} - \sum_{l=1}^{p-1} \binom{p-1}{l} \partial_t^l A_0(t_0) S_{G,m, p-l} \nonumber \\
  &\hspace{10em}- \sum_{l=0}^{p-1} \binom{p-1}{l} \partial_t^l D(t_0) S_{G,m,p-1-l}\Big), 
\end{align}
for $1 \leq p \leq m$. On the other hand, differentiating the boundary condition 
in~\eqref{IBVPDomain} up to $m-1$-times in time and inserting any $t \in \clJ$, we derive
\begin{equation}
\label{EquationDifferentiatedBoundaryCondition}
 B \partial_t^p u(t) = \partial_t^p g(t)
\end{equation}
on $\partial G$ for all $0 \leq p \leq m-1$ and $t \in \clJ$. Combining~\eqref{EquationTimeDerivativesOfSolution} 
 with~\eqref{EquationDifferentiatedBoundaryCondition} in $t = t_0$, we obtain the compatibility 
 conditions of order $m$ 
 \begin{equation}
 \label{EquationCompatibilityConditionPrecised}
 B S_{G,m,p}(t_0, A_0, \ldots, A_3, D, f, u_0) = \partial_t^p g(t_0) \quad \text{on } \partial G \text{ for } 0 \leq p \leq m-1
\end{equation}
for the coefficients and data. These conditions are thus necessary for the existence of a 
solution in $\Gdom{m}$. We will show in Section~\ref{SectionRegularity} that they are also sufficient.
If it is clear from the context which domain $G$ we consider, we will often suppress it in the notation.

The operators $S_{G,m,p}$ for $0 \leq p \leq m$ appear frequently in the following and 
corresponding estimates are indispensable.
\begin{lem}
 \label{LemmaEstimatesForHigherOrderInitialValues}
 Take $\eta > 0$, $m \in \N$, and set $\tilde{m} := \max\{m,3\}$.  Pick $r_0 > 0$. Choose
 $A_0 \in \Fdompdw{\tilde{m}}{\eta}$,  $A_1, A_2, A_3 \in \F{\tilde{m}}$ independent of time, 
 and $D \in \Fdom{\tilde{m}}$
 with
 \begin{align*}
  &\Fvarnormdom{\tilde{m}-1}{A_i(t_0)} \leq r_0, \quad \Fvarnormdom{\tilde{m}-1}{D(t_0)} \leq r_0, \\
  &\max_{1 \leq j \leq m-1}\Hhndom{\tilde{m}-1-j}{\partial_t^j A_0(t_0)} \leq r_0, \quad
  \max_{1 \leq j \leq m-1}{\Hhndom{\tilde{m}-1-j}{\partial_t^j D(t_0)}} \leq r_0
 \end{align*}
 for all $i \in \{0, \ldots, 3\}$.
 Take $f \in \Hadom{m}$ and $u_0 \in \Hhdom{m}$. 
 Then the function $S_{G,m,p}(t_0,A_0,\ldots,A_3, D, f,u_0)$ is contained in $\Hhdom{m-p}$ for all $p \in \{0,\ldots,m\}$.
 Moreover, there exist constants
 $C_{m,p} = C_{m,p}(\eta,r_0) > 0$ such that
 \begin{align*}
  \Hhndom{m-p}{S_{G,m,p}} \leq C_{m,p} \Big(\sum_{j = 0}^{p-1} \Hhndom{m-1-j}{\partial_t^j f(t_0)} + \Hhndom{m}{u_0} \Big)
 \end{align*}
 for $0 \leq p \leq m$.
\end{lem}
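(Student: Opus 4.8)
The plan is to prove the statement by induction on $p$, using the explicit recursion~\eqref{EquationDefinitionSmp} that defines $S_{G,m,p}$. The base case $p=0$ is immediate: $S_{G,m,0}=u_0\in\Hhdom{m}$ by hypothesis, with constant $C_{m,0}=1$. For the inductive step, I would assume that for all $q\le p-1$ the function $S_{G,m,q}$ lies in $\Hhdom{m-q}$ and satisfies the asserted bound, and then estimate each of the three sums appearing in the definition of $S_{G,m,p}$ in the norm of $\Hhdom{m-p}$.

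First I would handle the term $\sum_{j=1}^3 A_j\partial_j S_{G,m,p-1}$. Since $\partial_j S_{G,m,p-1}\in\Hhdom{m-p}$ by the inductive hypothesis (losing one derivative from $\Hhdom{m-(p-1)}$), and since $A_j\in\F{\tilde m}$ with $\tilde m\ge m$, the product estimate Lemma~\ref{LemmaRegularityForA0}\ref{ItemProductIn0} (with $\Fvardom{\tilde m}$ in place of $\Hhdom{m_1}$, noting $A_j$ is time-independent so its restriction lies in $\Fvardom{\tilde m}$) gives $\|A_j\partial_j S_{G,m,p-1}\|_{\Hhdom{m-p}}\le C\|A_j\|_{\Fvardom{m-p}}\|\partial_j S_{G,m,p-1}\|_{\Hhdom{m-p}}$, and $\|A_j\|_{\Fvardom{m-p}}\le\|A_j\|_{\Fvardom{\tilde m-1}}\le r_0$. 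The sums involving $\partial_t^l A_0(t_0)$ and $\partial_t^l D(t_0)$ are treated the same way: for $1\le l\le p-1$ we have $\partial_t^l A_0(t_0)\in\Hhdom{\tilde m-1-l}$ with norm $\le r_0$, and $\tilde m-1-l\ge m-1-l\ge m-p\ge\ldots$; one checks $m-1-l\ge m-(p-l)$ exactly when $l\ge 1$, so $\partial_t^l A_0(t_0)$ has enough regularity relative to the factor $S_{G,m,p-l}\in\Hhdom{m-(p-l)}$, and Lemma~\ref{LemmaRegularityForA0}\ref{ItemProductIn0} applies with $m_1=m-1-l\ge m-(p-l)=m_2$. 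The lower-order term $\partial_t^l D(t_0) S_{G,m,p-1-l}$ is analogous, noting the index shift gives $m-1-l$ versus $m-1-l$. Finally the inhomogeneity contributes $\partial_t^{p-1}f(t_0)\in\Hhdom{m-1-(p-1)}=\Hhdom{m-p}$, which is one of the terms on the right-hand side of the claimed bound, and multiplication by $A_0(t_0)^{-1}$ is controlled because $A_0\in\Fdompdw{\tilde m}{\eta}$ forces $A_0(t_0)^{-1}\in\Fvardom{\tilde m-1}$ with norm bounded in terms of $\eta$ and $r_0$ (standard: the entries of the inverse are rational functions of the entries of $A_0(t_0)$ with denominator $\det A_0(t_0)\ge\eta^3$), again invoking Lemma~\ref{LemmaRegularityForA0}\ref{ItemProductIn0}.

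Collecting the estimates, each summand is bounded by a constant depending only on $\eta$ and $r_0$ times either $\|\partial_t^j f(t_0)\|_{\Hhdom{m-1-j}}$ for some $0\le j\le p-1$ or $\|S_{G,m,q}\|_{\Hhdom{m-q}}$ for some $q\le p-1$; applying the inductive hypothesis to the latter and absorbing constants yields the bound with a new constant $C_{m,p}(\eta,r_0)$. I would take $C_{m,p}$ to be the maximum over the finitely many constants produced, so that a single constant works uniformly.

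The only real subtlety — and the step I expect to demand the most care — is bookkeeping the Sobolev indices: one must verify at each occurrence that the coefficient factor ($A_j$, or $\partial_t^l A_0(t_0)$, or $A_0(t_0)^{-1}$, etc.) carries at least as many derivatives as the $S$-factor it multiplies, so that the hypothesis $m_1\ge m_2$ of Lemma~\ref{LemmaRegularityForA0}\ref{ItemProductIn0} is met; this is where the choice $\tilde m=\max\{m,3\}$ and the particular form of the hypotheses on $A_0(t_0)$ and $D(t_0)$ (norms of $\partial_t^j$ evaluated in $\Hhdom{\tilde m-1-j}$) are used. Since $m-q\le m\le\tilde m-1$ is false in general (it fails when $m=\tilde m$, i.e. $m\ge3$), one must instead use that the relevant coefficient derivative $\partial_t^l A_0(t_0)$ lives in $\Hhdom{\tilde m-1-l}$ and that $\tilde m-1-l\ge m-(p-l)$ reduces to $\tilde m-1\ge m-p+2l\ge m-p$, which holds since $\tilde m\ge m$ and $p\ge 1$; for $l=0$ in the $D$-sum one needs $\tilde m-1\ge m-(p-1)$, i.e. $\tilde m\ge m-p+2$, true for $p\ge 2$, and the case $p=1$ uses only $u_0$ and $f(t_0)$ directly. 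A clean way to organize this is to first prove, as an auxiliary claim, that $S_{G,m,p}\in\Hhdom{m-p}$ (regularity only) by the same induction, and then run the quantitative estimate on top of it.
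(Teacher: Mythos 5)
Your overall strategy is exactly the paper's: induct on $p$ and estimate the terms in the recursion~\eqref{EquationDefinitionSmp} via the product estimates of Lemma~\ref{LemmaRegularityForA0}. So the route is right. But the Sobolev-index bookkeeping you singled out as the subtle step is incorrect as written, and the error is not cosmetic.

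You claim that the coefficient factor always carries at least as many derivatives as the $S$-factor it multiplies, and in particular that ``$\tilde m-1-l\ge m-(p-l)$ reduces to $\tilde m-1\ge m-p+2l\ge m-p$, which holds since $\tilde m\ge m$ and $p\ge 1$.'' This is false: $\tilde m-1\ge m-p+2l$ requires $p\ge 2l+1$ when $\tilde m=m$, and for $l$ close to $p-1$ this fails (take $m=\tilde m$, $p=2$, $l=1$: one would need $m-2\ge m-1$). Likewise ``$m-1-l\ge m-(p-l)$ exactly when $l\ge1$'' is wrong --- that inequality is $p\ge 2l+1$, not $l\ge1$. In other words, for large $l$ the roles reverse: $S_{G,m,p-l}\in\Hhdom{m-(p-l)}$ is the \emph{more} regular factor, and $\partial_t^l A_0(t_0)\in\Hhdom{\tilde m-1-l}$ the less regular one. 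As written, your application of Lemma~\ref{LemmaRegularityForA0}\ref{ItemProductIn0} with $m_1=\tilde m-1-l$ and $m_2=m-(p-l)$ simply does not meet the hypothesis $m_1\ge m_2$.

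The fix is easy and restores the argument: observe that \emph{both} indices dominate the target. Since $1\le l\le p-1$, one has $\tilde m-1-l\ge\tilde m-p\ge m-p$ and $m-(p-l)\ge m-p+1>m-p$. So apply Lemma~\ref{LemmaRegularityForA0}\ref{ItemProductIn0} with $m_1=\max\{\tilde m-1-l,\,m-(p-l)\}$ and $m_2=\min\{\ldots\}\ge m-p$, then embed $\Hhdom{m_2}\hookrightarrow\Hhdom{m-p}$; when the target is $L^2$ and neither index reaches $2$ (e.g. $m=2,p=2,l=1$, where both indices are $1$), use Lemma~\ref{LemmaRegularityForA0}\ref{ItemProductInL2} with $m_1=2$ instead of~\ref{ItemProductIn0}. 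The same swap is needed for the $D$-sum (your ``index shift gives $m-1-l$ versus $m-1-l$'' is not correct either --- the two indices are $\tilde m-1-l$ and $m-(p-1-l)$, which agree only when $p=2l+2$). Finally, the claim that the case $p=1$ ``uses only $u_0$ and $f(t_0)$ directly'' understates it: $S_{G,m,1}$ already contains the products $A_j\partial_j u_0$ and $D(t_0)u_0$, so the product lemma is still invoked there, just with $S_{G,m,0}=u_0$ supplied by hypothesis rather than induction.
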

For the proof one applies Lemma~\ref{LemmaRegularityForA0} to the terms appearing in~\eqref{EquationDefinitionSmp},
see~\cite[Lemma~2.33]{SpitzDissertation} for details.

Via localization, we reduce the initial boundary value problem~\eqref{IBVPDomain} on general domains $G$
to the corresponding problem on the half-space. While this procedure simplifies the underlying 
domain, we no longer deal with the curl operators but more general variable coefficient linear first 
order differential operators on the half-space. However, this operator still has 
a structural similarity with the curl operator. Since this structure is utterly 
important in the following, we want at least to indicate how the half-space problem arises from 
the local charts.

To that purpose, assume that we have chosen a finite covering $(U_i)_{i \in \N}$ of $\partial G$ with 
corresponding charts $\phi_i$, which are $C^{m+2}$-diffeomorphisms from $U_i$ to subsets $V_i$ 
of $B(0,1)$. Denoting the composition with $\phi_i^{-1}$ by $\Phi_i$, we consider on the 
half-space the differential operator
\begin{align}
  \label{EquationDerivationTransportedDifferentialOperator}
  L^i v &= \Phi_i \Big( A_0 \partial_t + \sum_{j=1}^3 A_j^{\operatorname{co}} \partial_j + D \Big) \Phi_i^{-1} v \nonumber \\
  &= \Phi_i \Big( A_0 \partial_t v \circ \phi_i + \sum_{j=1}^3 A_j^{\operatorname{co}} \partial_j (v \circ \phi_i) + D v \circ \phi_i \Big) \nonumber \\
  &= \Phi_i A_0 \, \partial_t v + \Phi_i \Big(\sum_{j = 1}^3 \sum_{l = 1}^3 A_j^{\operatorname{co}} \Phi_i^{-1} \partial_l v \,  \partial_j \phi_{i, l} \Big) + \Phi_i D  \,v \nonumber \\
  &= \Phi_i A_0 \, \partial_t v +  \sum_{l = 1}^3 \Big(\sum_{j = 1}^3 A_j^{\operatorname{co}} \Phi_i \partial_j \phi_{i,l}  \Big) \partial_l v + \Phi_i D\, v, 
\end{align}
for $v \in L^2(V_i)$, where $\phi_{i,l}$ denotes the $l$-th component of $\phi_i$ for all $i \in \N$. Extending 
the coefficients of $L^i$ from the bounded set $V_i$ to $\R^3_+$ appropriately, we obtain coefficients
$A_0^i \in \Fuwlk{m}{cp}{\eta}$ and $D^i \in \Fuwl{m}{cp}$ if $A_0$ and $D$ belong to 
$\Fdomuwlk{m}{cp}{\eta}$ respectively $\Fdomuwl{m}{cp}$. Moreover, the other coefficients $A_1^i$, 
$A_2^i$, and $A_3^i$ are contained in
\begin{align*}
 \Fcoeff{m}{cp} := \{&A \in \Fuwlk{m}{cp}{6} \colon \exists \mu_1, \mu_2, \mu_3 \in \Fuwlk{m}{cp}{1} 
 \text{ independent of time, } \\
 &\text{ constant outside of a compact set such that } A = \sum_{j = 1}^3 A_j^{\operatorname{co}} \mu_j\}.
\end{align*}
In fact, they even belong to $W^{m+1,\infty}(\R^3_+)$ as $G$ has a $C^{m+2}$-boundary. Exploiting 
this amount of regularity makes the spatial coefficients easier to treat. However, in order to 
streamline the assumptions in the results below, we treat them in the larger space $\Fcoeff{m}{cp}$.

Since the matrix $B$ is of rank $2$, we eliminate one row of the boundary condition in the 
localization procedure. The boundary condition is then given by a $2 \times 6$ matrix $B^i$. 
On the right-hand sides we obtain localized data $f^i$, $g^i$, and $u_0^i$.
We note that the localized coefficients and data can be estimated by their original counterparts 
in the corresponding norms with a constant only depending on the shape of $\partial G$.

Choosing the covering of $\partial G$ fine enough, there exist numbers $\tau > 0$  
and $k(i) \in \{1,2,3\}$ such that $|\mu^i_{3,k(i)}| \geq \tau$ on $\R^3_+$ for all $i \in \N$, 
see Lemma~5.1 in~\cite{SpitzDissertation}. 
Here we denote by $\mu_{3,j}^i$ the coefficients for which $A_3^i = \sum_{j = 1}^3 A_j^{\operatorname{co}} \mu_{3,j}^i$. 
This property allows us to transform for every $i \in \N$ 
the initial boundary value problem on the half-space into 
the form
\begin{equation}
  \label{LocalizedIBVP}
\left\{\begin{aligned}
   \tilde{A}_0^i \partial_t v + \sum_{j=1}^3 \tilde{A}_j^i \partial_j v + \tilde{D}^i v  &= \tilde{f}^i, \quad &&x \in \R^3_+, \quad &t \in J; \\
   \tilde{B}^i v &= \tilde{g}^i, \quad &&x \in \partial \R^3_+, &t \in J; \\
   v(0) &= \tilde{u}_0^i, \quad &&x \in \R^3_+;
\end{aligned}\right.
\end{equation}
with coefficients $\tilde{A}_0^i \in \Fuwlk{m}{cp}{\eta}$, $\tilde{A}^i_1, \tilde{A}^i_2 \in \Fcoeff{m}{cp}$, $\tilde{A}^i_3 = A_3^{\operatorname{co}}$, 
and $\tilde{B}^i = B^{\operatorname{co}}$, where $B^{\operatorname{co}}$ is a constant  
$2 \times 6$ matrix of rank $2$. (The actual form of $B^{\operatorname{co}}$ 
depends on $k(i)$. E.g., in the case $k(i) = 3$ we have $B^{\operatorname{co}} = (e_2, -e_1)^T$, 
where $e_1$ and $e_2$ are the correspondent unit vectors of $\R^6$.) For the transformed data we have $\tilde{f}^i \in \Ha{m}^6$, $\tilde{g}^i \in \E{m}^2$, and 
$\tilde{u}_0^i \in \Hh{m}^6$ for all $i \in \N$ if $f \in \Hadom{m}^6$, $g \in \Edom{m}^3$, 
and $u_0 \in \Hhdom{m}^6$. The localized and transformed data can be estimated by its original 
counterparts in the corresponding norms with a constant depending on suitable norms of 
the coefficients and the shape of $\partial G$.
The transform is given by
\begin{align*}
   &\tilde{A}_j^i = G_i^{T} A_j^i G_i, \quad \tilde{D}^i = G_i^{T} D^i G_i - \sum_{j = 1}^3 G_i^{T} A_j^i G_i \partial_j G_i^{-1} G_i,  
   \quad \tilde{B}^i = B^i G_i, \\
   &\tilde{f}^i = G_i^T f^i, \quad \tilde{g}^i = g^i, \quad \tilde{u}_0^i = G_i^{-1} u_0^i,
  \end{align*}
for $j \in \{0, \ldots, 3\}$, where
\begin{align*}
  G_i = \begin{pmatrix}
          \hat{G}_i &0 \\
          0 &\hat{G}_i
         \end{pmatrix},
 \quad \text{and} \quad 
 \hat{G}_i = \frac{1}{\sqrt{\mu_{3,3}^i}} \begin{pmatrix}
                               1 &0 &\mu_{1,3}^i \\
                               0 &1 &\mu_{2,3}^i \\
                               0 &0 &\mu_{3,3}^i
                              \end{pmatrix}
\end{align*}
in the case $k(i) = 3$ and $\mu_{3,3}^i \geq \tau$ on $\R^3_+$. In the other cases the matrix 
$\hat{G}_i$ has to be adapted accordingly.  A function $u^i$ then solves the untransformed problem 
if and only if $\tilde{u}^i = G_i^{-1} u^i$ solves the transformed one. Here we employ that 
the coefficients $A_j^i$ belong to $W^{m+1,\infty}(\R^3_+)$ for $j \in \{1,2,3\}$ in order 
to conclude that it is 
enough to study the transformed problem on the half-space in the following.
We refer to~\cite[Theorem~5.6~IV)]{SpitzDissertation} for the details of the localization 
procedure and the subsequent transform.

Finally, we point out that there is a constant $2 \times 6$ matrix $C^{\operatorname{co}}$ such that 
$A_3^{\operatorname{co}} = 1/2 (C^{\operatorname{co}  T} B^{\operatorname{co}} + B^{\operatorname{co}  T} C^{\operatorname{co}})$. 
We further note that $A_3^{\operatorname{co}}$ has exactly two positive and two negative eigenvalues counted with multiplicities.

\begin{rem}
 \label{RemarkAboutLocalization}
 \begin{enumerate}
  \item For the localization procedure we also have to treat the part $U_0$ of $G$ which is not 
	contained in the cover of $\partial G$. On $U_0$ a similar but simpler 
	procedure as described above leads to a full space problem for the localized solution.
	The results in Sections~\ref{SectionAPrioriEstimates} and~\ref{SectionRegularity}
	are also true on the full space and follow by the same strategy, which is 
	much easier in that case. See~\cite[Theorem~5.3]{SpitzDissertation} for a more detailed
	discussion.
  \item The proof that Theorem~\ref{TheoremMainResultOnDomain} indeed follows from the corresponding 
	result for problem~\eqref{LocalizedIBVP} is very technical but follows from standard ideas, 
	see~\cite[Theorem~5.6]{SpitzDissertation} once again.
 \end{enumerate}
\end{rem}


\section{A priori estimates}
\label{SectionAPrioriEstimates}
In the previous section we have reduced~\eqref{IBVP} to the initial boundary value problem
\begin{equation}
  \label{IBVP}
\left\{\begin{aligned}
   A_0 \partial_t u + \sum_{j=1}^3 A_j \partial_j u + D u  &= f, \quad &&x \in \R^3_+, \quad &t \in J; \\
   B u &= g, \quad &&x \in \partial \R^3_+, &t \in J; \\
   u(0) &= u_0, \quad &&x \in \R^3_+;
\end{aligned}\right.
\end{equation}
on the half-space with $A_3 = A_3^{\operatorname{co}}$ and $B = B^{\operatorname{co}}$. In this section we derive a priori estimates for $\G{m}$-solutions of~\eqref{IBVP}.
We note that $L(A_0, \ldots A_3, D) u = f \in \Ltwoa$ implies that the divergence of 
$((A_0 u)_k, \ldots, (A_3 u)_k)$ belongs to $\Ltwoa$ for $1 \leq k \leq 6$, $A_1, \ldots, A_3 \in W^{1,\infty}(\Omega)$,
and $u \in \Ltwoa$. Therefore, this 
vector has a normal trace
$((A_0 u)_k, \ldots, (A_3 u)_k) \cdot \nu$ in $H^{-1/2}(\partial \Omega)$, where 
$\nu$ denotes the normal outer unit of $\partial \Omega$. On $J \times \partial \R^3_+$ this 
product coincides with $- (A_3 u)_k$, which allows us to define a trace for $A_3 u$. One can find a constant 
matrix $M^{\operatorname{co}}$ such that $B^{\operatorname{co}} = M^{\operatorname{co}} A_3^{\operatorname{co}}$ so that we obtain a trace operator $\Tr$ for 
the function $B u$. We refer to Section~2.1 in~\cite{SpitzDissertation} for the details of 
this construction. By a (weak) solution of~\eqref{IBVP} we mean a function $u \in C(\clJ, \Ltwoh)$ 
with $L(A_0, \ldots, A_3, D) u = f$ in the weak sense, $\Tr(B u) = g$ on 
$J \times \partial \R^3_+$, and $u(0) = u_0$.

We first state the fundamental a priori estimate on $L^2$-level which was shown in Proposition~5.1
in~\cite{Eller}. The dependancies of the constants follow from the proof of this result in~\cite{Eller}.
\begin{lem}
 \label{LemmaEllerResult}
 Let $\eta > 0$ and $r \geq r_0 > 0$. Take $A_0 \in \Fupdwl{0}{\operatorname{cp}}{\eta}$, $A_1, A_2 \in \Fcoeff{0}{\operatorname{cp}}$ with $\|A_i\|_{W^{1,\infty}(\Omega)} \leq r$
 and $\|A_i(0)\|_{L^\infty(\R^3_+)} \leq r_0$ for all $i \in \{0, 1, 2\}$, and $A_3 = A_3^{\operatorname{co}}$. Let $D \in L^\infty(\Omega)$ with 
 $\|D\|_{L^\infty(\Omega)} \leq r$ and $B = B^{\operatorname{co}}$.
 Let $f \in \Ltwoa$, $g \in L^2(J, H^{1/2}(\partial \R^3_+))$, and $u_0 \in \Ltwoh$. Then~\eqref{IBVP} has a unique solution $u$ in $C(\clJ, \Ltwoh)$, 
 and there exists a number $\gamma_0 = \gamma_0(\eta,r) \geq 1$ such that
 \begin{align}
 \label{EquationEllerEstimateInL2}
 &\sup_{t \in J}\Ltwohn{e^{-\gamma t}u(t)}^2 + \gamma \Ltwoan{u}^2 \nonumber\\
 &\leq C_{0,0}\Ltwohn{u_0}^2 + C_{0,0} \|g\|_{L^2_\gamma(J, H^{1/2}(\partial \R^3_+))}^2 + C_0  \frac{1}{\gamma} \Ltwoan{f}^2
\end{align}
for all $\gamma \geq \gamma_0$, where $C_0 = C_0(\eta, r)$ 
and $C_{0,0} = C_{0,0}(\eta, r_0)$.
\end{lem}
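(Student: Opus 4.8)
The plan is to \emph{not} reprove this from scratch but to trace it back to~\cite[Proposition~5.1]{Eller}, whose hypotheses concern symmetric hyperbolic systems with coefficients in $W^{1,\infty}$, a constant boundary matrix $A_3 = A_3^{\operatorname{co}}$, and a constant conservative boundary matrix $B = B^{\operatorname{co}}$. The three tasks are: (i) check that the present hypotheses on $A_0, A_1, A_2, A_3, D, B$ indeed place us in the situation of Eller's result, so that existence and uniqueness of $u \in C(\clJ, \Ltwoh)$ follow verbatim; (ii) read off the $L^2$-estimate together with the stated $\gamma$-dependence and the splitting of the constants into $C_{0,0} = C_{0,0}(\eta, r_0)$ (multiplying the data at the initial time and the boundary term) and $C_0 = C_0(\eta, r)$ (multiplying the volume inhomogeneity, with the $1/\gamma$ weight); (iii) verify that these dependencies are visible in Eller's proof.

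First I would recall Eller's energy argument in enough detail to locate where each constant enters. One tests the equation $A_0 \partial_t u + \sum_j A_j \partial_j u + D u = f$ with $e^{-2\gamma t} u$, integrates over $(t_0,t) \times \R^3_+$, and integrates by parts. The term $\langle A_0 \partial_t u, u\rangle$ produces $\tfrac12 \tfrac{d}{dt}\langle A_0 u, u\rangle$ minus $\tfrac12\langle (\partial_t A_0) u, u\rangle$; positivity of $A_0$ (constant $\eta$) controls the energy $\|u(t)\|^2$ from below, while $\|\partial_t A_0\|_{L^\infty}$ and $\|D\|_{L^\infty}$ (hence $\leq r$) feed the Gronwall term. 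The terms $\langle A_j \partial_j u, u\rangle$ for $j=1,2$ integrate by parts into a boundary contribution on $x_3 = 0$ (which vanishes since $A_1, A_2$ have no $x_3$-component there — they lie in $\Fcoeff{0}{\operatorname{cp}}$, i.e.\ combinations of the $A_j^{\operatorname{co}}$, and one uses that $\curl$-type blocks contribute nothing normal) plus a volume term governed by $\|\partial_j A_j\|_{L^\infty} \leq r$. The term $\langle A_3^{\operatorname{co}} \partial_3 u, u\rangle$ produces exactly the boundary integral $\tfrac12 \int_{\partial \R^3_+} \langle A_3^{\operatorname{co}} u, u\rangle$, and here conservativity of $B^{\operatorname{co}}$ relative to $A_3^{\operatorname{co}}$ — via the identity $A_3^{\operatorname{co}} = \tfrac12(C^{\operatorname{co}\,T}B^{\operatorname{co}} + B^{\operatorname{co}\,T}C^{\operatorname{co}})$ noted just above — lets one bound this by $\|g\|_{H^{1/2}(\partial\R^3_+)}$ times the trace of $u$, absorbed by the $H^{-1/2}$-trace bound into the interior energy norm. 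Choosing $\gamma = \gamma_0(\eta,r)$ large enough to dominate all the Gronwall contributions on the left produces~\eqref{EquationEllerEstimateInL2}.

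The key point I want to make transparent is the bookkeeping of constants. The initial-value term $\|u_0\|^2$ and the boundary term $\|g\|_{L^2_\gamma(J,H^{1/2})}^2$ are multiplied only by the lower bound for $A_0$ and by $\|A_0(0)\|_{L^\infty}, \|A_i(0)\|_{L^\infty}$ (all $\leq r_0$) — no time-derivative bounds intervene there — so their constant is $C_{0,0} = C_{0,0}(\eta, r_0)$. By contrast the inhomogeneity $f$ is controlled through the $1/\gamma$-absorption, and $\gamma_0$ together with the coefficient of $\tfrac1\gamma\Ltwoan{f}^2$ depends on $\|A_i\|_{W^{1,\infty}}$ and $\|D\|_{L^\infty}$ (all $\leq r$), whence $C_0 = C_0(\eta,r)$ and $\gamma_0 = \gamma_0(\eta,r)$. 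The main obstacle — really the only one — is that none of this is self-contained: the existence half relies entirely on~\cite{Eller} (duality/functional-analytic construction of the weak solution with the weak trace discussed in the paragraph preceding the lemma), and the constant dependencies must be extracted by inspection of that proof rather than derived here. So the proof reduces to the sentence already in the text: the statement is~\cite[Proposition~5.1]{Eller}, and the stated form of the constants follows from examining its proof.
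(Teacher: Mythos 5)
Your proposal takes exactly the same route as the paper: Lemma~\ref{LemmaEllerResult} is not proved in the text but attributed to Proposition~5.1 of~\cite{Eller}, with the parenthetical remark that the dependencies of $C_{0,0}$, $C_0$, and $\gamma_0$ on $(\eta, r_0, r)$ follow by inspecting Eller's proof, which is precisely your conclusion. Your supplementary sketch of the energy argument is a reasonable reconstruction of where each constant arises, though one small slip is worth flagging: for $j \in \{1,2\}$ the integration by parts of $\langle A_j \partial_j u, u \rangle$ over $\R^3_+$ produces \emph{no} boundary contribution on $\{x_3 = 0\}$ at all, simply because $\partial_1, \partial_2$ are tangential directions and the outward normal has $\nu_1 = \nu_2 = 0$; it is not a matter of $A_1, A_2$ "having no $x_3$-component" or the curl-block structure cancelling a normal part. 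That structural fact ($A_j \in \Fcoeff{0}{\operatorname{cp}}$, hence spanned by the constant $A_l^{\operatorname{co}}$) is used elsewhere (Proposition~\ref{PropositionCentralEstimateInNormalDirection}) but plays no role in the $L^2$-energy identity here. This does not affect the correctness of the overall argument or the constant bookkeeping.
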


We now start to derive the desired a priori estimates. In a first step, we give estimates for the  
tangential derivatives of a solution. The proof is classical but since we are interested in the 
particular structure of the constants, cf. the introduction, we provide the details.

We introduce the space $\Hata{m}$ which 
consists of those functions $v \in \Ltwoa$ with $\partial^\alpha v \in \Ltwoa$ for all 
$\alpha \in \N_0^4$ with $|\alpha| \leq m$ and $\alpha_3 = 0$. We equip this space with its natural norm.
\begin{lem}
\label{LemmaCentralEstimateInTangentialDirections}
Let $\eta > 0$ and  $r \geq r_0 > 0$. Pick $m \in \N$ and set $\tilde{m} = \max\{m,3\}$. 
Take $A_0 \in \Fupdwl{\tilde{m}}{\operatorname{cp}}{\eta}$, $A_1, A_2 \in \Fcoeff{\tilde{m}}{\operatorname{cp}}$, 
$A_3 = A_3^{\operatorname{co}}$, $D \in \Fuwl{\tilde{m}}{\operatorname{cp}}$, and $B = B^{\operatorname{co}}$ with
\begin{align*}
	&\Fnorm{\tilde{m}}{A_i} \leq r, \quad \Fnorm{\tilde{m}}{D} \leq r, \\ 
	&\max\{\Fvarnorm{\tilde{m}-1}{A_i(0)}, \max_{1 \leq j \leq m-1} \Hhn{\tilde{m}-1-j}{\partial_t^j A_0(0)}\} \leq r_0, \\
	&\max\{\Fvarnorm{\tilde{m}-1}{D(0)}, \max_{1 \leq j \leq m-1} \Hhn{\tilde{m}-1-j}{\partial_t^j D(0)}\} \leq r_0,
\end{align*}
for all $i \in \{0, 1, 2\}$.
Choose $f \in \Hata{m}$, $g \in \E{m}$, and $u_0 \in \Hh{m}$. 
Assume that the solution $u$ of~\eqref{IBVP} belongs to $\G{m}$. 
Then there exists $\gamma_m = \gamma_m(\eta, r) \geq 1$ such that
\begin{align}
 \label{EquationCentralEstimateInTangentialDirection}
  &\sum_{\substack{|\alpha| \leq m \\ \alpha_3 = 0}} \Gnorm{0}{\partial^\alpha u}^2  + \gamma \Hatan{m}{u}^2  \\
  &\leq  C_{m,0} \Big(\sum_{j = 0}^{m-1} \Hhn{m-1-j}{\partial_t^j f(0)}^2 + \Enorm{m}{g}^2 + \Hhn{m}{u_0}^2 \Big) \nonumber \\
	&\hspace{2em}  +   \frac{C_m}{\gamma} \Big( \Hatan{m}{f}^2 + \Gnorm{m}{u}^2 \Big), \nonumber
\end{align}
for all $\gamma \geq \gamma_0$, where $C_m = C_m(\eta, r, T')$,  
and $C_{m,0} = C_{m,0}(\eta, r_0)$.
\end{lem}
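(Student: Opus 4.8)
The plan is to establish~\eqref{EquationCentralEstimateInTangentialDirection} by induction over the order of tangential differentiation, using the basic $L^2$-estimate from Lemma~\ref{LemmaEllerResult} applied to tangentially differentiated versions of the solution. First I would fix a multiindex $\alpha \in \N_0^4$ with $\alpha_3 = 0$ and $1 \leq |\alpha| \leq m$, and apply the tangential derivative $\partial^\alpha$ to the differential equation in~\eqref{IBVP}. Since $\partial^\alpha$ commutes with $\partial_t$ and $\partial_j$ (in particular with $A_3 = A_3^{\operatorname{co}}$, which is a \emph{constant} matrix, so no commutator appears with the normal derivative term), the function $u_\alpha := \partial^\alpha u$ solves an initial boundary value problem of the same type,
\begin{equation*}
 A_0 \partial_t u_\alpha + \sum_{j=1}^3 A_j \partial_j u_\alpha + D u_\alpha = f_\alpha, \quad B u_\alpha = g_\alpha, \quad u_\alpha(0) = u_{0,\alpha},
\end{equation*}
where $f_\alpha = \partial^\alpha f - \sum_{\beta < \alpha} \binom{\alpha}{\beta}\big(\partial^{\alpha-\beta}A_0 \, \partial^\beta \partial_t u + \sum_{j=1,2} \partial^{\alpha-\beta}A_j \, \partial^\beta \partial_j u + \partial^{\alpha-\beta}D\, \partial^\beta u\big)$ collects the commutator terms (only $A_0, A_1, A_2, D$ contribute since $A_3$ is constant), $g_\alpha = \partial^\alpha g$ is the tangentially differentiated boundary datum, and $u_{0,\alpha}$ is obtained from~\eqref{EquationTimeDerivativesOfSolution}: writing $\alpha = (\alpha_0,\alpha_1,\alpha_2,0)$ with $p = \alpha_0$ time derivatives and $\alpha' = (\alpha_1,\alpha_2)$ spatial tangential ones, we have $u_{0,\alpha} = \partial_x^{\alpha'} S_{m,p}(0,A_0,\ldots,A_3,D,f,u_0)$, whose $H^{m-|\alpha|}$-norm is controlled by Lemma~\ref{LemmaEstimatesForHigherOrderInitialValues} in terms of $\sum_{j=0}^{p-1}\Hhn{m-1-j}{\partial_t^j f(0)}$ and $\Hhn{m}{u_0}$ with a constant $C_{m,0}(\eta,r_0)$.

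\textbf{Key steps.} Applying Lemma~\ref{LemmaEllerResult} to $u_\alpha$ yields
\begin{equation*}
 \Gnorm{0}{\partial^\alpha u}^2 + \gamma \Ltwoan{\partial^\alpha u}^2 \leq C_{0,0}\Hhn{0}{u_{0,\alpha}}^2 + C_{0,0}\|g_\alpha\|_{L^2_\gamma(J,H^{1/2}(\partial\R^3_+))}^2 + \frac{C_0}{\gamma}\Ltwoan{f_\alpha}^2
\end{equation*}
for all $\gamma \geq \gamma_0(\eta,r)$. I would then sum over all admissible $\alpha$ and estimate the three right-hand terms in turn. The initial-value contribution is handled as above. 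For the boundary term, $\sum_{|\alpha|\leq m,\alpha_3=0}\|\partial^\alpha g\|_{L^2_\gamma(J,H^{1/2}(\partial\R^3_+))}^2 \leq C \Enorm{m}{g}^2$ by the very definition of $\E{m}$ (tangential derivatives up to order $m$ land in $L^2_\gamma$ of $H^{1/2}$). For the inhomogeneity term, the leading piece $\partial^\alpha f$ sums to $\Hatan{m}{f}^2$; each commutator term is a product of a derivative of a coefficient of order $|\alpha-\beta| \geq 1$ with a derivative of $u$ of order $|\beta|+1 \leq |\alpha| \leq m$, and Lemma~\ref{LemmaRegularityForA0}~\ref{ItemProductInL2} (with $m_1 = \tilde m \geq m$) bounds its $L^2_\gamma$-norm by $\Fnorm{\tilde m}{A_i}$ respectively $\Fnorm{\tilde m}{D}$ times $\Gnorm{m}{u}$, giving a contribution $\frac{C_m}{\gamma}\Gnorm{m}{u}^2$ with $C_m = C_m(\eta,r,T')$. (The dependence on $T'$ enters through the time-weighted norms and the equivalences between $\Ltwoan{\cdot}$ and $\sup$-norms on the finite interval $J \subset (t_0,t_0+T')$.) Collecting everything and taking $\gamma_m := \gamma_0(\eta,r)$ gives~\eqref{EquationCentralEstimateInTangentialDirection}.

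\textbf{Main obstacle.} The bookkeeping that needs the most care is tracking the precise dependence of the constants: one must check that every commutator term only ever pairs a coefficient-derivative with a solution-derivative of total order $\leq m$, so that a single factor of $\Gnorm{m}{u}$ (and not a higher norm) suffices, and that the coefficient factors are absorbed into $r$-dependent constants via the $\Fnorm{\tilde m}{\cdot} \leq r$ bounds — using $\tilde m = \max\{m,3\}$ to guarantee enough regularity of the coefficients for the bilinear estimate in Lemma~\ref{LemmaRegularityForA0}. A secondary subtlety is that the half-space problem for $u_\alpha$ must genuinely be of the form covered by Lemma~\ref{LemmaEllerResult}: this is exactly where the reduction in Section~\ref{SectionLocalizationAndFunctionSpaces} to $A_3 = A_3^{\operatorname{co}}$ pays off, since otherwise $[\partial^\alpha, A_3\partial_3]$ would produce an uncontrollable normal-derivative commutator. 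No control of $\partial_3 u$ is attempted here — that is deferred to Proposition~\ref{PropositionCentralEstimateInNormalDirection}.
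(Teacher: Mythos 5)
Your proposal is correct and follows essentially the same route as the paper's own proof: apply the $L^2$-estimate of Lemma~\ref{LemmaEllerResult} to each tangentially differentiated solution $\partial^\alpha u$ (noting that $A_3 = A_3^{\operatorname{co}}$ is constant so no normal-derivative commutator arises), bound the initial value $u_{0,\alpha}$ via Lemma~\ref{LemmaEstimatesForHigherOrderInitialValues}, bound the commutator contributions to $f_\alpha$ via Lemma~\ref{LemmaRegularityForA0}, and sum over $\alpha$ with $\alpha_3 = 0$, $|\alpha| \leq m$. Your accounting of the constant dependencies, including where the $T'$ factor enters (passing between $L^2_t$ and $L^\infty_t$ norms), matches what the paper does.
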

\begin{proof}
Let $\alpha \in \N_0^4$ with $|\alpha| \leq m$ and $\alpha_3 = 0$. It is straightforward to show 
that $\partial^\alpha u$ solves the initial boundary value problem
\begin{equation}
\label{EquationOneTimesDiff}
  \left\{\begin{aligned}
 L(A_0,\ldots, A_3, D) v &=  f_{\alpha},  \quad &&x \in \R^3_+, \quad &t \in J;\\
 B  v &= \partial^\alpha g,   &&x \in \partial \R^3_+, \quad &t \in J; \\
  v(0) &= u_{0,\alpha}, && x \in \R^3_+,
  \end{aligned}\right.
\end{equation}
where
\begin{align*}
  f_{\alpha} &= \partial^\alpha f - \sum_{j = 0}^2 \sum_{0 < \beta \leq \alpha} \binom{\alpha}{\beta} \partial^\beta A_j \partial^{\alpha - \beta} \partial_j u
	- \sum_{0 < \beta \leq \alpha} \binom{\alpha}{\beta} \partial^\beta D \partial^{\alpha - \beta} u, \\
  u_{0,\alpha} &= \partial^\alpha u(0) = \partial^{(0,\alpha_1, \alpha_2, 0)} S_{\R^3_+,m,\alpha_0}(0,A_0, \ldots, A_3, D, f, u_0),
\end{align*}
and where we employed that $A_3 = A_3^{\operatorname{co}}$ and $B = B^{\operatorname{co}}$, see Section~3.1 in~\cite{SpitzDissertation} for details.
We note that $f_\alpha$ is an element of $\Ha{m - |\alpha|}$ with
\begin{align}
\label{EquationEstimateForfalpha}
 \Ltwoan{f_\alpha} \leq \Hatan{m}{f} + C(r,T') \Gnorm{m}{u}
\end{align}
by Lemma~3.4 of~\cite{SpitzDissertation}, which uses Lemma~\ref{LemmaRegularityForA0} above. Lemma~\ref{LemmaEstimatesForHigherOrderInitialValues} 
further yields that $u_{0,\alpha}$ belongs to $\Hh{m - |\alpha|}$ and 
\begin{equation}
\label{EquationEstimateForu0alpha}
 \Hhn{m - |\alpha|}{u_{0,\alpha}} \leq C_{\ref{LemmaEstimatesForHigherOrderInitialValues}; m, |\alpha|}\Big(\sum_{k = 0}^{m-1} \Hhn{m-1-k}{\partial_t^k f(0)} + \Hhn{m}{u_0} \Big),
\end{equation}
where $C_{\ref{LemmaEstimatesForHigherOrderInitialValues}; m,|\alpha|} = C_{\ref{LemmaEstimatesForHigherOrderInitialValues}; m,|\alpha|}(\eta,r_0)$ is 
the constant from Lemma~\ref{LemmaEstimatesForHigherOrderInitialValues}.

Since $\partial^\alpha u$ solves the initial boundary value problem~\eqref{EquationOneTimesDiff}, we can apply 
estimate~\eqref{EquationEllerEstimateInL2} to $\partial^\alpha u$ and then insert estimates~\eqref{EquationEstimateForfalpha} 
and~\eqref{EquationEstimateForu0alpha} to deduce
\begin{align*}
 &\Gnorm{0}{\partial^\alpha u}^2 + \gamma \Ltwoan{\partial^\alpha u}^2 \\
 &\leq C_{0,0} \Ltwohn{u_{0,\alpha}}^2 + C_{0,0} \|\partial^\alpha g\|_{L^2_\gamma(J, H^{1/2}(\partial \R^3_+))}^2 + C_0 \frac{1}{\gamma} \Ltwoan{f_\alpha}^2  \\
 &\leq \tilde{C}_{m,0} \Big( \sum_{k = 0}^{m-1} \Hhn{m-1-k}{\partial_t^k f(0)}^2 + \Hhn{m}{u_0}^2 \Big) 
	+ \tilde{C}_{m,0} \Enorm{m}{g}^2 \\
  &\quad  + \tilde{C}_m \frac{1}{\gamma} \Big( \Gnorm{m}{u}^2 +  \Hatan{m}{f}^2 \Big)
\end{align*}
for all $\gamma \geq \gamma_{0}$. Here $\gamma_{0}(\eta,r) = \gamma_{\ref{LemmaEllerResult};0}(\eta,r)$ is 
the corresponding number from Lemma~\ref{LemmaEllerResult} and $\tilde{C}_{m,0} = \tilde{C}_{m,0}(\eta,r_0)$
and $\tilde{C}_m = \tilde{C}_m(\eta,r,T')$ denote constants with the 
described dependancies. Summing over all multiindices $\alpha \in \N_0^4$ with $\alpha_3 = 0$ and $|\alpha| \leq m$, we 
thus obtain the assertion.
\end{proof}


The above procedure only works in tangential directions because differentiation in the normal direction does 
not preserve the boundary condition. Since the boundary matrix $A_3$ is not invertible, we neither obtain
the normal derivative from the equation itself.
Instead, we will use the structure of the Maxwell equations to get an estimate for the normal derivative.
We consider the 
initial value problem
\begin{equation}
  \label{IVP}
 \left\{\begin{aligned}
   L(A_0,\ldots, A_3, D) u &= f, \qquad &&x \in \R^3_+, \qquad &&t \in J; \\
   u(0) &= u_0,  &&x \in \R^3_+.
 \end{aligned} \right.
\end{equation}
We define a solution of~\eqref{IVP} 
to be a function $u \in C(\clJ, \Ltwoh)$ with $u(0) = u_0$ in $\Ltwoh$ and $L u = f$ in $H^{-1}(\Omega)$.
In the iteration and regularization process it will be important that we do not impose a boundary condition 
in~\eqref{IVP} and the next result.

For the formulation of Proposition~\ref{PropositionCentralEstimateInNormalDirection} below we also need the following notion.
Take $A_1, A_2, A_3 \in \Fcoeff{0}{cp}$. The definition of this space then implies that there are functions 
$\mu_{lj} \in \Fuwlk{0}{cp}{1}$ such that
\begin{equation}
\label{EquationFixingmuljForCoefficients}
 A_j = \sum_{l = 1}^3 A_l^{\operatorname{co}} \mu_{lj} \quad \text{for all } j \in \{1, 2, 3\}.
\end{equation}
We set 
\begin{equation}
\label{EquationDefinitionOfTildeMu}
    \tilde{\mu} = \begin{pmatrix}
		  \mu &0 \\
		  0   &\mu
        \end{pmatrix},
 \end{equation}
 where $\mu$ denotes the $3 \times 3$-matrix $(\mu_{lj})_{lj}$, and we define
 \begin{equation}
 \label{EquationDefinitionOfGeneralizedDiv}
  \Div(A_1, A_2, A_3) h = \Big(\sum_{k = 1}^3 (\tilde{\mu}^T \nabla h)_{kk}, \sum_{k = 1}^3 (\tilde{\mu}^T \nabla h)_{(k+3) k} \Big)
 \end{equation}
 for all $h \in \Ltwoh$.

The next proposition is the key step in the derivation of the regularity theory for~\eqref{IBVPDomain}. 
It tells us that the derivative in normal direction can be controlled by the ones in tangential directions 
and the data although the problem is characteristic. In the complement of the kernel of $A_3^{\operatorname{co}}$ 
we can control $\partial_3 u$ via the equation. For the remaining components we exploit that the 
(generalized) divergence $\Div(A_1, A_2, A_3)$ of the (generalized) Maxwell operator $\sum_{j = 1}^3 A_j \partial_j u$ 
only contains first order derivatives of $u$. This cancellation property of the Maxwell system then 
allows us to apply a Gronwall argument.

We point out that we do not assume that $u$ belongs to $\G{1}$. For the derivative in normal 
direction it is enough to demand that it belongs to $L^\infty(J, \Ltwoh)$. While the gain of regularity 
which is thus contained in Proposition~\ref{PropositionCentralEstimateInNormalDirection} is just a byproduct of 
the proof here, the reduced regularity assumption is utterly important for the regularization procedure in 
Section~\ref{SectionRegularity}. Similarly, estimate~\eqref{EquationFirstOrderL2} with its 
less regular right-hand side is a significant tool in Section~\ref{SectionRegularity}.
\begin{prop}
\label{PropositionCentralEstimateInNormalDirection}
Let $T' > 0$, $\eta > 0$, $\gamma \geq 1$, and $r \geq r_0 >0$. 
Take $A_0 \in \Fupdwl{0}{\operatorname{cp}}{\eta}$, $A_1, A_2 \in \Fcoeff{0}{\operatorname{cp}}$, $A_3 = A_3^{\operatorname{co}}$,
and $D \in \Fuwl{0}{\operatorname{cp}}$ with 
\begin{align*}
	&\|A_i\|_{W^{1,\infty}(\Omega)} \leq r, \quad \|D\|_{W^{1,\infty}(\Omega)} \leq r, \\
	&\|A_i(0)\|_{L^\infty(\R^3_+)} \leq r_0,  \quad \|D(0)\|_{L^\infty(\R^3_+)} \leq r_0
\end{align*}
for all $i \in \{0, 1, 2\}$. 
Choose $f \in \G{0}$ with $\Div(A_1, A_2, A_3) f \in \Ltwoa$ and $u_0 \in \Hh{1}$.
Let $u$ solve~\eqref{IVP} with initial
value $u_0$ and inhomogeneity $f$. Assume that $u  \in C^1(\clJ, L^2(\R^3_+)) \cap C(\clJ, \Hhta{1}) \cap L^\infty(J,\Hh{1})$.
Then $u$ belongs to $\G{1}$ and there are constants $C_{1,0} = C_{1,0}(\eta,r_0) \geq 1$ and $C_1 = C_1(\eta, r, T') \geq 1$ 
such that
\begin{align}
 \label{EquationFirstOrderFinal}
    \Gnorm{0}{\nabla u}^2 &\leq e^{C_1 T}  \Big((C_{1,0} + T C_1)\Big(\sum_{j = 0}^2 \Ltwohnt{\partial_j u}^2 
      +   \Ltwohnt{f}^2 + \Hhn{1}{u_0}^2 \Big)\nonumber \\
    & \hspace{4em}  + \frac{C_1}{\gamma} \Ltwoan{\Div(A_1, A_2, A_3) f}^2  \Big).
\end{align}
If $f$ additionally belongs to $\Ha{1}$, we get
\begin{align}
 \label{EquationFirstOrderFinalVariant}
    \Gnorm{0}{\nabla u}^2 &\leq e^{C_1 T}  \Big(\!(C_{1,0} + T C_1)\Big(\sum_{j = 0}^2 \Ltwohnt{\partial_j u}^2  
      +   \Ltwohn{f(0)}^2 + \Hhn{1}{u_0}^2\Big)\nonumber \\
    & \hspace{4em}  + \frac{C_1}{\gamma} \Hangamma{1}{f}^2   \Big).
\end{align}
Finally, if $f$ merely belongs to $\Ltwoa$ with $\Div(A_1, A_2, A_3) f \in \Ltwoa$, we still have
\begin{align}
 \label{EquationFirstOrderL2}
 \Ltwoan{\nabla u}^2 &\leq e^{C_1 T} \Big((C_{1,0} + T C_1)  \Big(  \sum_{j = 0}^2 \Ltwoan{\partial_j u}^2  + \Ltwoan{f}^2 + \Hhn{1}{u_0}^2\Big) \nonumber\\
 &\hspace{4em} + \frac{C_1}{\gamma}  \Ltwoan{\Div(A_1, A_2, A_3) f}^2 \Big).
\end{align}
\end{prop}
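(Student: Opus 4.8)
\emph{Approach.} The plan is to recover the normal derivative $\partial_3 u$ componentwise in two groups. Let $P$ be the orthogonal projection of $\R^6$ onto $\image A_3^{\operatorname{co}}=(\kernel A_3^{\operatorname{co}})^\perp$, which is four-dimensional because $A_3^{\operatorname{co}}$ has exactly two positive and two negative eigenvalues. From the differential equation in \eqref{IVP}, $A_3^{\operatorname{co}}\partial_3 u=f-A_0\partial_t u-A_1\partial_1 u-A_2\partial_2 u-Du$, and since $A_3^{\operatorname{co}}$ is invertible on its range with inverse bounded by an absolute constant, this expresses the four components $P\partial_3 u$ pointwise through $f$, $\partial_0 u$, $\partial_1 u$, $\partial_2 u$ and $u$; together with $\Gnorm 0 u\le\Ltwohn{u_0}+\tfrac1\gamma\Gnorm 0{\partial_0 u}$ (integrate $\partial_t u$ and insert the time weight) this already controls $P\partial_3 u$ by the right-hand side. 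The two remaining components $(I-P)\partial_3 u\in\kernel A_3^{\operatorname{co}}$ --- the normal derivatives of the components of $u$ lying in $\kernel A_3^{\operatorname{co}}$, which are the normal field components in the Maxwell picture --- are exactly what the divergence structure must supply.

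\emph{The divergence identity.} For those I would introduce $d:=\Div(A_1,A_2,A_3)(A_0 u)$, the abstract version of the pair $(\div(\epsilon\bs E),\div(\mu\bs H))$. Since the functions $\mu_{lj}$ entering $\Div(A_1,A_2,A_3)$ are time-independent, $\partial_t d=\Div(A_1,A_2,A_3)(\partial_t(A_0 u))$; inserting \eqref{IVP} and the form of $D$ dictated by \eqref{EquationFormForCoefficientsAndInhomogeneityForMaxwell}, the decisive algebraic fact is that $\Div(A_1,A_2,A_3)\bigl(\sum_{j=1}^3 A_j\partial_j u\bigr)$ contains \emph{no} second order derivatives of $u$ --- the variable-coefficient version of $\div\curl=0$, built into the decomposition $A_j=\sum_l A_l^{\operatorname{co}}\mu_{lj}$ --- and the zero-order terms combine so that on the right only $\Div(A_1,A_2,A_3)f$ (which lies in $\Ltwoh$ by assumption) plus a first-order-in-$u$ expression with $L^\infty$ coefficients remains; hence $\partial_t d$ is controlled in $\Ltwoh$ by $\Div(A_1,A_2,A_3)f$, $\nabla u$ and $u$. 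Conversely, spelling out $d$ shows that $(I-P)\partial_3 u$ enters $d$ with an invertible $2\times 2$ matrix as coefficient (here the uniform positive definiteness $A_0\ge\eta$ is used), so that $\|(I-P)\partial_3 u(t)\|_{\Ltwoh}$ is bounded by $\|d(t)\|_{\Ltwoh}$, $\|\partial_1 u(t)\|_{\Ltwoh}$, $\|\partial_2 u(t)\|_{\Ltwoh}$, $\|P\partial_3 u(t)\|_{\Ltwoh}$ and $\|u(t)\|_{\Ltwoh}$.

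\emph{Closing the loop.} These two bounds are mutually circular --- $\partial_t d$ is estimated through $\nabla u$, and $\nabla u$ through $d$ --- but together they close. Inserting the second bound into $\tfrac{d}{dt}\bigl(e^{-2\gamma t}\|d(t)\|_{\Ltwoh}^2\bigr)$ and using the dissipative term to absorb the cross term with $\Div(A_1,A_2,A_3)f$ (it contributes $\tfrac1\gamma e^{-2\gamma t}\|\Div(A_1,A_2,A_3)f\|_{\Ltwoh}^2$), Gronwall's lemma together with $\|d(0)\|_{\Ltwoh}^2\le C\Hhn 1{u_0}^2$ for $d(0)=\Div(A_1,A_2,A_3)(A_0(0)u_0)$ bounds $\Gnorm 0 d^2$ by $e^{C_1 T}$ times $\Hhn 1{u_0}^2$, the $\tfrac1\gamma$-weighted $\Div(A_1,A_2,A_3)f$-term, and the time integrals of $\|\partial_j u\|^2$ and $\|u\|^2$ --- the latter gaining a factor $T$ when estimated crudely by the corresponding $\Gnorm 0\cdot$-norms. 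Combining with the first step, adding $\|\partial_1 u\|^2+\|\partial_2 u\|^2$ trivially and using $\Gnorm 0 u^2\le C(\Hhn 1{u_0}^2+\Gnorm 0{\partial_0 u}^2)$ gives \eqref{EquationFirstOrderFinal}; running the same argument with $L^2_\gamma(\Omega)$-norms everywhere instead of $\Gnorm 0\cdot$ gives \eqref{EquationFirstOrderL2}; and \eqref{EquationFirstOrderFinalVariant} follows from \eqref{EquationFirstOrderFinal} by $\|f(t)\|_{\Ltwoh}\le\|f(0)\|_{\Ltwoh}+\int_0^t\|\partial_s f(s)\|_{\Ltwoh}\,ds$ and $\|\Div(A_1,A_2,A_3)f\|_{\Ltwoh}\le C\|\nabla f\|_{\Ltwoh}$, the extra terms being absorbed into the $\tfrac{C_1}{\gamma}$-term. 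Finally $u\in\G 1$ is automatic, since every component of $\nabla u$ has now been written through $f$, $\partial_0 u$, $\partial_1 u$, $\partial_2 u$, $u$ and $d$, all of which lie in $C(\clJ,\Ltwoh)$ under the standing assumptions $u\in C^1(\clJ,\Ltwoh)\cap C(\clJ,\Hhta 1)$ and since $d\in C(\clJ,\Ltwoh)$.

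\emph{Main obstacle.} The heart of the matter is the structural identity of the second step: one must verify that the a priori second-order term $\Div(A_1,A_2,A_3)\bigl(\sum_j A_j\partial_j u\bigr)$ really collapses to first order thanks to the curl-type form of the $A_j$, and that the zero-order contributions --- in particular those of $\partial_t A_0$, which would otherwise carry an uncontrolled $\nabla\partial_t A_0$ --- cancel against the $Du$-terms because of the precise shape of $D$ in \eqref{EquationFormForCoefficientsAndInhomogeneityForMaxwell}. This is exactly where the special structure of the Maxwell system is irreplaceable. A second, more bookkeeping-type difficulty is to keep the constants separated so that the initial-data terms carry only the $r_0$-dependent constant $C_{1,0}$ while the $r$-dependent $C_1$ appears solely together with $T$ or $\gamma^{-1}$; this forces one to isolate, at every step, the quantities evaluated at $t=0$ from those that can be made small by enlarging $\gamma$, and to track which time integrations produce the gain of a factor $T$.
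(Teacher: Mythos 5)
Your overall strategy is the paper's: split $\partial_3 u$ into the component in $\image A_3^{\operatorname{co}}$, recovered from the equation, and the two components in $\kernel A_3^{\operatorname{co}}$, recovered from a divergence-type quantity; then a Gronwall argument closes the circular dependence, and the $2\times 2$ invertibility you invoke is exactly the paper's block $\bigl(\begin{smallmatrix}A_{0;33}&A_{0;36}\\A_{0;63}&A_{0;66}\end{smallmatrix}\bigr)\geq\eta$. However, your specific choice $d=\Div(A_1,A_2,A_3)(A_0u)$ introduces a genuine gap. Writing $\partial_t d=\Div(A_1,A_2,A_3)\bigl(\partial_t(A_0u)\bigr)=\Div(A_1,A_2,A_3)\bigl(f-\sum_j A_j\partial_j u-Du+(\partial_t A_0)u\bigr)$ and unfolding the definition of $\Div$, the last term produces $\nabla(\partial_tA_0)\cdot u$, which is not controlled by $\|A_0\|_{W^{1,\infty}(\Omega)}\leq r$. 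You recognise this and propose a cancellation against the $Du$-term using the \emph{Maxwell-specific} form of $D$ from \eqref{EquationFormForCoefficientsAndInhomogeneityForMaxwell}, i.e.\ $D=\partial_t A_0+(\text{zero-order dissipation})$. But Proposition~\ref{PropositionCentralEstimateInNormalDirection} is stated for an \emph{arbitrary} $D\in\Fuwl{0}{\operatorname{cp}}$ with $\|D\|_{W^{1,\infty}(\Omega)}\leq r$, and this generality is essential: in the localization to the half-space the transported coefficient $\tilde D^i$ contains the curvature term $-\sum_j G_i^TA_j^iG_i\partial_jG_i^{-1}G_i$ and has no Maxwell structure; in Lemma~\ref{LemmaBasicRegularityInTime} one applies the estimates with $L_{\partial_t}=L(A_0,\ldots,A_3,\partial_tA_0+D)$; and in the induction of Theorem~\ref{TheoremAPrioriEstimates} the proposition is applied to $\partial_3^{m-1}u$ with the original $D$. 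So the cancellation you rely on is simply unavailable.

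The paper circumvents this by tracking the matrix-valued quantity $\tilde\mu^{T}A_0\nabla u$ rather than $\tilde\mu^{T}\nabla(A_0u)$. Differentiating in time gives
\begin{equation*}
  \partial_t\bigl(\tilde\mu^{T}A_0\nabla u\bigr)
  =\tilde\mu^{T}(\partial_tA_0)\nabla u
   +\tilde\mu^{T}A_0\nabla\Bigl(A_0^{-1}\bigl(f-\sum_{j}A_j\partial_j u-Du\bigr)\Bigr),
\end{equation*}
and expanding the gradient one meets only $\partial_tA_0$, $A_0\nabla A_0^{-1}$, $\nabla A_j$, $\nabla D$ and $D$ as coefficients --- all bounded in $L^\infty$ under the stated $W^{1,\infty}$ hypotheses --- never $\nabla\partial_tA_0$. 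The trace combinations $\sum_k(\tilde\mu^{T}A_0\nabla u)_{kk}$ and $\sum_k(\tilde\mu^{T}A_0\nabla u)_{(k+3)k}$ then provide the two extra rows for the $8\times6$ system \eqref{EquationFullRankEquationforNormalDerivativeOfu}, replacing your $d$. Your proof would go through once you make this substitution; the rest --- the Levi-Civita cancellation of second-order terms, the $2\times 2$ block, the Gronwall loop, and the reduction of \eqref{EquationFirstOrderFinalVariant} and \eqref{EquationFirstOrderL2} --- is essentially correct.
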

\begin{proof}
For the assertion of the lemma it is enough to show that $\partial_3 u$ belongs to $C(\clJ, \Ltwoh)$ and 
that inequalities~\eqref{EquationFirstOrderFinal} to~\eqref{EquationFirstOrderL2} hold.

By the definition of the space $\Fcoeff{0}{\operatorname{cp}}$ 
there are functions $\mu_{lj} \in \Fuwlk{0}{\operatorname{cp}}{1}$ for $l,j \in \{1,2,3\}$
which satisfy~\eqref{EquationFixingmuljForCoefficients} and are bounded by $C(r)$ in $\Fk{0}{1}$.
Since $A_3 = A_3^{\operatorname{co}}$, we have $\mu_{13} = \mu_{23} = 0$ and $\mu_{33} = 1$.
Moreover, 
\begin{equation}
\label{EquationConstantMatricesAndLeviCivita}
 A_l^{\operatorname{co}} = \begin{pmatrix}
                            0 &-J_l \\
                            J_l &0
                           \end{pmatrix}
  \text{ with }  J_{l;mn} = - \epsilon_{lmn}
\end{equation}
for all $l, m, n \in \{1,2,3\}$, where $\epsilon _{lmn}$ denotes the Levi-Civita symbol, i.e.,
\begin{equation*}
	\epsilon_{ijk} =  \left\{\begin{aligned}
 &1  \quad &&\text{if } (i,j,k) \in \{(1,2,3), (2,3,1), (3,1,2)\},\\
 -&1   &&\text{if } (i,j,k) \in \{(3,2,1), (2,1,3), (1,3,2)\},\\
  &0 &&\text{else}.
  \end{aligned}\right.
\end{equation*}
We use the matrix $\tilde{\mu}$ from~\eqref{EquationDefinitionOfTildeMu}.
Since the coefficients are Lipschitz, 
we can take the weak time derivative of  $\tilde{\mu}^T A_0  \nabla u$  componentwise to obtain
\begin{align}
\label{EquationDifferentiatingMTransposedAzeroGradu}
  &\partial_t (\tilde{\mu}^T A_0  \nabla u) =  \tilde{\mu}^T \partial_t A_0 \nabla u + \tilde{\mu}^T A_0 \partial_t \nabla u \nonumber \\
  &=  \tilde{\mu}^T \partial_t A_0 \nabla u + \tilde{\mu}^T A_0 \nabla \Big(A_0^{-1} \Big(f - \sum_{j = 1}^3 A_j \partial_j u - D u \Big)\Big) \nonumber\\
  &=  \tilde{\mu}^T \partial_t A_0 \nabla u  + \tilde{\mu}^T A_0 \nabla A_0^{-1}\Big(f - \sum_{j = 1}^3 A_j \partial_j u - D u \Big)  \nonumber \\
  &\quad + \tilde{\mu}^T \nabla f - \tilde{\mu}^T \sum_{j = 1}^2 \nabla A_j \partial_j u - \tilde{\mu}^T \nabla D u - \tilde{\mu}^T D \nabla u - \tilde{\mu}^T \sum_{j = 1}^3 A_j \nabla \partial_j u
\end{align}
in $L^\infty(J, \Hh{-1})$, also employing~\eqref{IVP} and that
\begin{align*}
 ((\nabla A_0^{-1}) h)_{jk} := \sum_{l=1}^6 \partial_k A_{0;jl}^{-1} h_l
\end{align*}
 and analogously for $A_j$ with $j \in \{1, 2, 3\}$ and $D$.
We abbreviate
\begin{align}
\label{EquationDefinitionOfLambda}
 \Lambda &:= \tilde{\mu}^T \partial_t A_0 \nabla u  + \tilde{\mu}^T A_0 \nabla A_0^{-1}\Big(f - \sum_{j = 1}^3 A_j \partial_j u - D u\Big)  \nonumber \\
  &\qquad + \tilde{\mu}^T \nabla f - \tilde{\mu}^T \sum_{j = 1}^2 \nabla A_j \partial_j u - \tilde{\mu}^T \nabla D u - \tilde{\mu}^T D \nabla u
\end{align}
and note that this sum only contains first order spatial derivatives of $u$. 
We further compute
\begin{align*}
 &\sum_{k = 1}^3 \Big(\tilde{\mu}^T \sum_{j = 1}^3 A_j \nabla \partial_j u \Big)_{kk} 
  = \sum_{j,k = 1}^3 \sum_{l,p = 1}^6 \tilde{\mu}^T_{kl} A_{j;lp} \partial_k \partial_j u_p \\
  &= \sum_{j,k,n = 1}^3 \sum_{l,p = 1}^6 \tilde{\mu}^T_{kl} A_{n;lp}^{\operatorname{co}} \mu_{nj} \partial_k \partial_j u_p 
  = \sum_{j,k,l,n = 1}^3 \sum_{p = 1}^6 \mu_{lk} A_{n;lp}^{\operatorname{co}} \mu_{nj} \partial_k \partial_j u_p,
\end{align*}
using that $\tilde{\mu}_{lk} = 0$ for all $(l,k) \in \{4,5,6\} \times \{1,2,3\}$. Formula~\eqref{EquationConstantMatricesAndLeviCivita} thus leads to
\begin{equation}
\label{EquationTraceInvolvingLeviCivita}
 \sum_{k = 1}^3 \Big(\tilde{\mu}^T \sum_{j = 1}^3 A_j \nabla \partial_j u \Big)_{kk} = \sum_{j,k,l,n,p = 1}^3  \epsilon_{nlp} \mu_{lk}  \mu_{nj} \partial_k \partial_j u_{p+3}.
\end{equation}
Interchanging the indices $l$ and $n$ as well as $k$ and $j$, we arrive at
\begin{align}
 \label{EquationTraceInterchangingIndices}
 \sum_{k = 1}^3 \Big(\tilde{\mu}^T \sum_{j = 1}^3 A_j \nabla \partial_j u \Big)_{kk} &= \sum_{j,k,l,n,p = 1}^3  \epsilon_{lnp} \mu_{nj}  \mu_{lk} \partial_j \partial_k u_{p+3} \nonumber\\
 &= - \sum_{j,k,l,n,p = 1}^3  \epsilon_{nlp} \mu_{lk} \mu_{nj} \partial_k \partial_j u_{p+3}.
\end{align}
Equations~\eqref{EquationTraceInvolvingLeviCivita} and~\eqref{EquationTraceInterchangingIndices} 
thus yield
\begin{equation}
 \label{EquationTraceSmaller3EqualsZero}
 \sum_{k = 1}^3 \Big(\tilde{\mu}^T \sum_{j = 1}^3 A_j \nabla \partial_j u \Big)_{kk} = 0.
\end{equation}
Analogously, we derive
\begin{align}
 \label{EquationTraceLarger3EqualsZero}
 &\sum_{k = 1}^3 \Big(\tilde{\mu}^T \sum_{j = 1}^3 A_j \nabla \partial_j u \Big)_{(k+3)k} = 0.
\end{align}
In view of~\eqref{EquationDefinitionOfLambda}, equation~\eqref{EquationDifferentiatingMTransposedAzeroGradu} 
now implies that
\begin{align*}
 \sum_{k = 1}^3 \partial_t (\tilde{\mu}^T A_0  \nabla u)_{kk} &= \sum_{k = 1}^3 \Lambda_{kk}.
\end{align*}
An integration in $\Hh{-1}$ from $0$ to $t$ then leads to the identity
\begin{align*}
 \sum_{k = 1}^3 (\tilde{\mu}^T A_0  \nabla u)_{kk}(t) &= \sum_{k = 1}^3 (\tilde{\mu}^T A_0  \nabla u)_{kk}(0) +  \sum_{k = 1}^3 \int_0^t\Lambda_{kk}(s) ds
\end{align*}
for all $t \in \clJ$. The integrand on the right-hand side is also integrable with values in $\Ltwoh$, 
implying that the integral exists in $\Ltwoh$ and the equality holds in $\Ltwoh$ for all $t \in \clJ$. 
Starting from~\eqref{EquationTraceLarger3EqualsZero}, we obtain in the same way that
\begin{equation*}
 \sum_{k = 1}^3 (\tilde{\mu}^T A_0  \nabla u)_{(k+3)k}(t) = \sum_{k = 1}^3 (\tilde{\mu}^T A_0  \nabla u)_{(k+3)k}(0) +  \sum_{k = 1}^3 \int_0^t\Lambda_{(k+3)k}(s) ds
\end{equation*}
in $\Ltwoh$ for all $t \in \clJ$.
We denote the $k$-th row respectively the $k$-th column of a matrix 
$N$ by $N_{k \cdot}$ respectively $N_{\cdot k}$ and we set 
\begin{align}
\label{EquationDefinitionF7F8}
 F_7(t) &= \sum_{k = 1}^3 (\tilde{\mu}^T A_0  \nabla u)_{kk}(0) +  \sum_{k = 1}^3 \int_0^t\Lambda_{kk}(s) ds - \sum_{k = 1}^2 (\tilde{\mu}^T A_0)_{k \cdot} \partial_k u(t), \\
 F_8(t) &= \sum_{k = 1}^3 (\tilde{\mu}^T A_0  \nabla u)_{(k+3)k}(0) +  \sum_{k = 1}^3 \int_0^t\Lambda_{(k+3)k}(s) ds - \sum_{k = 1}^2 (\tilde{\mu}^T A_0)_{(k+3) \cdot} \partial_k u(t) \nonumber
\end{align}
for all $t \in \clJ$. Moreover, we define
\begin{align}
\label{EquationDefinitionF1ToF6}
 (F_1, \ldots, F_6)^T = f - \sum_{j = 0}^2 A_j \partial_j u - D u.
\end{align}
The function $F = (F_1, \ldots, F_8)^T$ then belongs to $C(\clJ, \Ltwoh)$. Introducing the matrix
\begin{equation*}
 \hat{\mu} = \begin{pmatrix}
            A_3 \\
            (\tilde{\mu}^T A_0)_{3 \cdot} \\
            (\tilde{\mu}^T A_0)_{6 \cdot}
           \end{pmatrix} \in \F{0}^{8 \times 6},
\end{equation*}
we obtain
\begin{equation}
\label{EquationFullRankEquationforNormalDerivativeOfu}
 \hat{\mu} \partial_3 u = F.
\end{equation}
We multiply $\hat{\mu}$ with the matrix
\begin{equation}
\label{EquationDefinitionOfG2}
 G_1 = \begin{pmatrix}
        1 &0 &0 &0 &0 &0 &0 &0 \\
        0 &-1 &0 &0 &0 &0 &0 &0 \\
        0 &0 &1 &0 &0 &0 &0 &0 \\
        0 &0 &0 &-1 &0 &0 &0 &0 \\
        0 &0 &0 &0 &1 &0 &0 &0 \\
        0 &0 &0 &0 &0 &1 &0 &0 \\
        -\tilde{\mu}^T_{3l} A_{0;l5} & \tilde{\mu}^T_{3l} A_{0; l4} &0 &\tilde{\mu}^T_{3l} A_{0;l2} &-\tilde{\mu}^T_{3l} A_{0;l1} &0 &1 &0 \\
        -\tilde{\mu}^T_{6l} A_{0;l5} & \tilde{\mu}^T_{6l} A_{0; l4} &0 &\tilde{\mu}^T_{6l} A_{0;l2} &-\tilde{\mu}^T_{6l} A_{0;l1} &0 &0 &1
       \end{pmatrix},
\end{equation}
where summation over the index $l$ (from $1$ to $6$) is implicitly assumed.
It follows
\begin{equation*}
 G_1 \hat{\mu} = \begin{pmatrix}
                    0 &0 &0 &0 &1 &0 \\
                    0 &0 &0 &1 &0 &0 \\
                    0 &0 &0 &0 &0 &0 \\
                    0 &1 &0 &0 &0 &0 \\
                    1 &0 &0 &0 &0 &0 \\
                    0 &0 &0 &0 &0 &0 \\
                    0 &0 &\alpha_{33} &0 &0 &\alpha_{36} \\
                    0 &0 &\alpha_{63} &0 &0 &\alpha_{66}
                   \end{pmatrix}
\end{equation*}
with the numbers
\begin{align*}
 \alpha_{kn} &=  \sum_{j = 1}^3 \sum_{l = 1}^6 \tilde{\mu}^T_{kl} A_{0;l(j+n-3)} \mu_{j3} = \tilde{\mu}^T_{k\cdot} A_{0} \tilde{\mu}_{\cdot n} = A_{0; kn}
\end{align*}
for all $k,n \in \{3,6\}$.
We conclude that
\begin{equation}
\label{EquationDefinitionMatrixAlpha}
 \begin{pmatrix}
        \alpha_{33} &\alpha_{36} \\
        \alpha_{63} &\alpha_{66}
       \end{pmatrix}
       = 
       \begin{pmatrix}
        A_{0;33} &A_{0;36} \\
        A_{0;63} &A_{0;66}
       \end{pmatrix}
       \geq \eta.
\end{equation}
Hence, it has an inverse $\beta$ satisfying
\begin{equation*}
 \|\beta\|_{L^\infty(\Omega)} \leq C(\eta).
\end{equation*}
Introducing the matrix
\begin{equation}
\label{EquationDefinitionOfG2AndG3}
 G_2 = \begin{pmatrix}
        I_{6 \times 6} &0 \\
        0 &\beta
       \end{pmatrix}, 
\end{equation}
we compute
\begin{equation}
\label{EquationMassMatrixAfterGaussAlgorithm}
  G_2 G_1 \hat{\mu} = \begin{pmatrix}
			      0 &0 &0 &0 &1 &0 \\
			      0 &0 &0 &1 &0 &0 \\
			      0 &0 &0 &0 &0 &0 \\
			      0 &1 &0 &0 &0 &0 \\
			      1 &0 &0 &0 &0 &0 \\
			      0 &0 &0 &0 &0 &0 \\
			      0 &0 &1 &0 &0 &0 \\
			      0 &0 &0 &0 &0 &1
                           \end{pmatrix} =: \tilde{M}.
\end{equation}
Using also~\eqref{EquationFixingmuljForCoefficients} and~\eqref{EquationDefinitionOfTildeMu}, we see that
\begin{equation*}
 \|G_2 G_1 \|_{L^\infty(\Omega)} \leq C(\eta) (1 + c_0)
\end{equation*}
with the constant
\begin{equation*}
  c_0 = \max\{\max_{j = 0, \ldots, 3} \|A_j \|_{L^\infty(\Omega)}, \|D\|_{L^\infty(\Omega)}\}.
\end{equation*}
Equation~\eqref{EquationFullRankEquationforNormalDerivativeOfu} and~\eqref{EquationMassMatrixAfterGaussAlgorithm} yield
\begin{equation}
\label{EquationFullRankMatrixMtildeInFrontOfPartial3u}
 \tilde{M} \partial_3 u = G_2 G_1 F.
\end{equation}
Since the matrices $G_i$ belong to $C(\overline{J}, L^\infty(\R^3_+))$ and $F$ is contained in $C(\overline{J}, \Ltwoh)$, 
we infer that $\partial_3 u$ is contained in $C(\overline{J}, \Ltwoh)$ and 
\begin{equation}
\label{EquationFirstEstimateForNormalDerivativeOfu}
 \Ltwohn{\partial_3 u(t)} \leq C(\eta)(1 + c_0) \Ltwohn{F(t)}
\end{equation}
for all $t \in \clJ$. To estimate $\Ltwohn{F(t)}$ we first note that
\begin{align}
\label{EquationEstimateForFInLtwoh}
 &\Ltwohn{F(t)} \leq \Ltwohn{(F_1, \ldots, F_6)^T(t)} + \Ltwohn{(F_7, F_8)^T(t)}  \\
 &\leq \Ltwohn{f(t)} + c_0 \sum_{j = 0}^2 \Ltwohn{\partial_j u(t)} + c_0 \Ltwohn{u(t)}  + \Ltwohn{(F_7, F_8)^T(t)} \nonumber
\end{align}
for all $t \in \clJ$. Applying Minkowski's inequality, we further deduce
\begin{align*}
  &\Ltwohn{(F_7, F_8)^T(t)} \leq C(r_0) \Hhn{1}{u_0} + c_0 \sum_{k = 1}^2 \Ltwohn{\partial_k u(t)} \\
  &+ C(\eta,r) \int_0^t (\Ltwohn{\nabla u(s)} + \Ltwohn{u(s)} + \Ltwohn{\Div f(s)} + \Ltwohn{f(s)}) ds \\
\end{align*}
for all $t \in \clJ$, where we abbreviate $\Div(A_1,A_2,A_3)$ by $\Div$.
This estimate,~\eqref{EquationFirstEstimateForNormalDerivativeOfu}, and~\eqref{EquationEstimateForFInLtwoh}, 
lead to the inequality
\begin{align}
\label{EquationPointwiseEstimateForGradient}
 &\Ltwohn{\nabla u(t)} \\
 &\leq C(\eta) (1 + c_0) \Big( \Ltwohn{f(t)} + c_0 \sum_{j = 0}^2 \Ltwohn{\partial_j u(t)} 
 + c_0 \Ltwohn{u(t)}  \nonumber\\
 &\quad + C(r_0) \Hhn{1}{u_0}  + C(\eta,r) \int_0^t (\Ltwohn{\nabla u(s)} + \Ltwohn{u(s)} \nonumber\\
 &\hspace{15em} + \Ltwohn{\Div f(s)} + \Ltwohn{f(s)}) ds\Big) \nonumber
\end{align}
for all $t \in \clJ$. Let $\gamma \geq 1$. Using H{\"o}lder's inequality, we infer
\begin{align*}
 &\Ltwohn{\nabla u(t)} \leq C(\eta) (1 + c_0) (1 + T C(\eta,r)) \Big( e^{\gamma t} \Gnorm{0}{f} + c_0 e^{\gamma t} \Gnorm{0}{u}  \\
  &\hspace{3em} + c_0 e^{\gamma t} \sum_{j = 0}^2 \Gnorm{0}{\partial_j u} 
 + C(r_0) \Hhn{1}{u_0} \Big) \\
 &\hspace{3em}  + C(\eta,r)\Big(\frac{1}{\sqrt{\gamma}} e^{\gamma t} \Ltwoan{\Div f}  +\int_0^t \Ltwohn{\nabla u(s)} ds\Big) \\
 &=: g(t) + C(\eta,r) \int_0^t \Ltwohn{\nabla u(s)} ds
\end{align*}
for all $t \in \clJ$.
 Since the function $g$
increases in $t$, Gronwall's inequality yields
\begin{align}
\label{EquationGronwall}
 \Gnorm{0}{\nabla u} &\leq \Big( C(\eta,r_0) (1 + c_0)^2 (1 + T C(\eta,r)) \Big( \Gnorm{0}{f} +  \Gnorm{0}{u} \nonumber\\
 &\!+  \sum_{j = 0}^2 \Gnorm{0}{\partial_j u}  + \Hhn{1}{u_0} \Big)  +  C(\eta,r)\frac{1}{\sqrt{\gamma}} \Ltwoan{\Div f} \Big) e^{C(\eta,r)T}.
\end{align}
Employing that $\partial_t A_0$ belongs to 
$L^\infty(\Omega)$, we further obtain 
\begin{align*} 
 \|A_0\|_{L^\infty(\Omega)}  \leq \|A_0(0)\|_{L^\infty(\R^3_+)} + T \|A_0\|_{W^{1,\infty}(\Omega)} \leq r_0 + T r.
\end{align*}
We argue analogously for the other coefficients, which yields $c_0 \leq r_0 + T r$.

To conclude~\eqref{EquationFirstOrderFinal}, we write $u$ as
\begin{align*}
  u(t) = u(0) + \int_0^t \partial_t u(s) ds
 \end{align*}
 in $\Ltwoh$ using that $u$ belongs to $C^1(\clJ, \Ltwoh)$. Minkowski's and H{\"o}lder's inequality then imply
 \begin{align}
 \label{EquationSplittingfInLowerOrderNorm}
  \Gnorm{0}{u}^2 \leq 2 \Ltwohn{u_0}^2 + \frac{1}{\gamma} T \Gnorm{0}{\partial_t u}^2.
 \end{align}
 Plugging this inequality into~\eqref{EquationGronwall}, assertion~\eqref{EquationFirstOrderFinal} follows.
 If $f$ additionally belongs to $\Ha{1}$, we argue as in~\eqref{EquationSplittingfInLowerOrderNorm} 
 for the function $f$ to derive~\eqref{EquationFirstOrderFinalVariant}.
 
 Now assume that $f$ only belongs to $\Ltwoa$ with $\Div f \in \Ltwoa$. Then estimate~\eqref{EquationPointwiseEstimateForGradient} 
 is still valid for almost all $t \in J$. We square~\eqref{EquationPointwiseEstimateForGradient}, 
 multiply with the exponential $e_{-2 \gamma}$, and integrate from $0$ to $t$. Applying Gronwall's 
 inequality to the function $t \mapsto \int_0^t e^{-2 \gamma s} \Ltwohn{\nabla u(s)}^2 ds$, 
 we deduce~\eqref{EquationFirstOrderL2} in the same way as we obtained~\eqref{EquationFirstOrderFinal}.
\end{proof}

Combining the a priori estimates in tangential and normal direction with an iteration argument, 
we obtain our first main result. It provides the desired a priori estimates of arbitrary order.
\begin{theorem}
 \label{TheoremAPrioriEstimates}
 Let $T' > 0$, $\eta > 0$, and $r \geq  r_0 >0$. Pick $T \in (0, T']$ and set $J = (0,T)$.
 Let $m \in \N$ and $\tilde{m} = \max\{m,3\}$. 
 Choose $A_0 \in \Fupdwl{\tilde{m}}{\operatorname{cp}}{\eta}$, $A_1, A_2 \in \Fcoeff{\tilde{m}}{\operatorname{cp}}$, 
 $A_3 = A_3^{\operatorname{co}}$,  
 $D \in \Fuwl{\tilde{m}}{\operatorname{cp}}$, and $B = B^{\operatorname{co}}$ with
 \begin{align*}
  &\Fnorm{\tilde{m}}{A_i} \leq r, \quad \Fnorm{\tilde{m}}{D} \leq r, \\
  &\max \{\Fvarnorm{\tilde{m}-1}{A_i(0)},\max_{1 \leq j \leq \tilde{m}-1} \Hhn{\tilde{m}-j-1}{\partial_t^j A_0(0)}\} \leq r_0, \\
  &\max \{\Fvarnorm{\tilde{m}-1}{D(0)},\max_{1 \leq j \leq \tilde{m}-1} \Hhn{\tilde{m}-j-1}{\partial_t^j D(0)}\} \leq r_0
 \end{align*}
 for all $i \in \{0, \ldots, 2\}$.
 Let $f \in \Ha{m}$, $g \in \E{m}$, and $u_0 \in \Hh{m}$. Assume that the solution $u$ 
 of~\eqref{IBVP}  belongs to $\G{m}$. Then there is a number
 $\gamma_m = \gamma_m(\eta, r, T') \geq 1$ such that
 \begin{align}
  &\Gnorm{m}{u}^2  \leq (C_{m,0} + T C_m) e^{m C_1 T} \Big(  \sum_{j = 0}^{m-1} \Hhn{m-1-j}{\partial_t^j f(0)}^2 + \Enorm{m}{g}^2  \nonumber\\
      &\hspace{18em} + \Hhn{m}{u_0}^2 \Big) + \frac{C_m}{\gamma}  \Hangamma{m}{f}^2    \nonumber
 \end{align}
 for all $\gamma \geq \gamma_m$, where $C_m = C_m(\eta, r,T') \geq 1$, $C_{m,0} = C_{m,0}(\eta,r_0) \geq 1$, and 
 $C_1 = C_1(\eta,r,T')$ is a constant independent of $m$.
\end{theorem}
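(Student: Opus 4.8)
The plan is to control every mixed derivative $\partial^\alpha u$ with $|\alpha|\le m$ in the norm $\Gnorm{0}{\cdot}$ — which suffices since $\Gnorm{m}{u}^2\le\sum_{|\alpha|\le m}\Gnorm{0}{\partial^\alpha u}^2$ — and to do so by induction on the number $k=\alpha_3$ of normal derivatives. Put $N_k:=\sum_{|\alpha|\le m,\,\alpha_3\le k}\Gnorm{0}{\partial^\alpha u}^2$ and write $\mathcal D:=\sum_{j=0}^{m-1}\Hhn{m-1-j}{\partial_t^j f(0)}^2+\Enorm{m}{g}^2+\Hhn{m}{u_0}^2$ for the data term. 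The base case $k=0$ is precisely Lemma~\ref{LemmaCentralEstimateInTangentialDirections} (which applies since $f\in\Ha{m}\subseteq\Hata{m}$ and $\Hatan{m}{f}\le\Hangamma{m}{f}$): it gives $N_0\le C_{m,0}\,\mathcal D+\frac{C_m}{\gamma}(\Hangamma{m}{f}^2+\Gnorm{m}{u}^2)$ for all $\gamma\ge\gamma_0(\eta,r)$.

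For the inductive step $k\to k+1$, I would fix $\alpha$ with $|\alpha|\le m$ and $\alpha_3=k+1$, write $\alpha=\alpha'+e_3$ (so $|\alpha'|\le m-1$, $\alpha'_3=k$), and differentiate the equation in~\eqref{IBVP} by $\partial^{\alpha'}$. Because $A_3=A_3^{\operatorname{co}}$ and $B=B^{\operatorname{co}}$, exactly as in the derivation of Lemma~\ref{LemmaCentralEstimateInTangentialDirections}, the function $w:=\partial^{\alpha'}u$ solves the initial value problem~\eqref{IVP} with
\begin{align*}
 f_{\alpha'}=\partial^{\alpha'}f-\sum_{j=0}^{2}\sum_{0<\beta\le\alpha'}\binom{\alpha'}{\beta}\partial^\beta A_j\,\partial^{\alpha'-\beta}\partial_j u-\sum_{0<\beta\le\alpha'}\binom{\alpha'}{\beta}\partial^\beta D\,\partial^{\alpha'-\beta}u
\end{align*}
and initial value $u_{0,\alpha'}=\partial^{(0,\alpha'_1,\alpha'_2,k)}S_{\R^3_+,m,\alpha'_0}(0,A_0,\ldots,A_3,D,f,u_0)$. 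Since $u\in\G{m}$ and $|\alpha'|\le m-1$, we have $w\in\G{m-|\alpha'|}\subseteq C^1(\clJ,\Ltwoh)\cap C(\clJ,\Hhta{1})\cap L^\infty(J,\Hh{1})$; by Lemma~\ref{LemmaRegularityForA0}, $f_{\alpha'}\in\Ha{m-|\alpha'|}\subseteq\Ha{1}$; and by Lemma~\ref{LemmaEstimatesForHigherOrderInitialValues}, $u_{0,\alpha'}\in\Hh{m-|\alpha'|}\subseteq\Hh{1}$. Hence the hypotheses of Proposition~\ref{PropositionCentralEstimateInNormalDirection} are met, and I would apply its variant~\eqref{EquationFirstOrderFinalVariant} to $w$. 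Since $\partial_3 w=\partial^\alpha u$, while for $j\in\{0,1,2\}$ the derivative $\partial_j w=\partial^{\alpha'+e_j}u$ satisfies $(\alpha'+e_j)_3=k$ and $|\alpha'+e_j|\le m$, this gives
\begin{align*}
 \Gnorm{0}{\partial^\alpha u}^2\le e^{C_1T}(C_{1,0}+TC_1)\big(N_k+\Ltwohn{f_{\alpha'}(0)}^2+\Hhn{1}{u_{0,\alpha'}}^2\big)+e^{C_1T}\tfrac{C_1}{\gamma}\Hangamma{1}{f_{\alpha'}}^2,
\end{align*}
with $C_1=C_1(\eta,r,T')$ and $C_{1,0}=C_{1,0}(\eta,r_0)$ the constants from Proposition~\ref{PropositionCentralEstimateInNormalDirection}, which do not depend on $m$ or on $\alpha$.

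It remains to estimate the three extra terms. By the product estimates of Lemma~\ref{LemmaRegularityForA0} and $\Fnorm{\tilde{m}}{A_j},\Fnorm{\tilde{m}}{D}\le r$ one obtains $\Hangamma{1}{f_{\alpha'}}^2\le C_m(\eta,r,T')\,(\Hangamma{m}{f}^2+\Gnorm{m}{u}^2)$, which enters with the crucial prefactor $1/\gamma$. Rewriting $\partial^{\alpha'-\beta}\partial_j u(0)$ and $\partial^{\alpha'-\beta}u(0)$ as spatial derivatives of the functions $S_{\R^3_+,m,p}(0,\ldots)$ and using Lemma~\ref{LemmaEstimatesForHigherOrderInitialValues} — whose hypotheses on $A_0(0),D(0)$ and $\partial_t^jA_0(0),\partial_t^jD(0)$ are exactly the ones assumed here with bound $r_0$ — together with Lemma~\ref{LemmaRegularityForA0}, one gets $\Ltwohn{f_{\alpha'}(0)}^2+\Hhn{1}{u_{0,\alpha'}}^2\le C_{m,0}(\eta,r_0)\,\mathcal D$. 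Summing over the finitely many $\alpha$ with $\alpha_3=k+1$, $|\alpha|\le m$, adding $N_k$, and inserting the inductive bound for $N_k$, one arrives at $N_{k+1}\le(C_{m,0}+TC_m)e^{(k+1)C_1T}\mathcal D+\frac{C_m}{\gamma}(\Hangamma{m}{f}^2+\Gnorm{m}{u}^2)$ for $\gamma\ge\gamma_0(\eta,r)$, after enlarging the constants and absorbing the bounded factor $e^{C_1T}\le e^{C_1T'}$ into $C_m$; note the exponent grows by a single summand $C_1T$ per step, with $C_1$ unchanged. After $m$ steps, and using $\Gnorm{m}{u}^2\le N_m$, this yields
\begin{align*}
 \Gnorm{m}{u}^2\le(C_{m,0}+TC_m)e^{mC_1T}\mathcal D+\frac{C_m}{\gamma}\big(\Hangamma{m}{f}^2+\Gnorm{m}{u}^2\big).
\end{align*}
Choosing $\gamma_m:=\max\{\gamma_0(\eta,r),2C_m\}\ge1$, the term $\frac{C_m}{\gamma}\Gnorm{m}{u}^2$ is absorbed into the left-hand side for every $\gamma\ge\gamma_m$, and after renaming constants the assertion follows. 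I expect the main obstacle to be the third paragraph: verifying that the data $(f_{\alpha'},u_{0,\alpha'})$ of each differentiated problem meets the hypotheses of Proposition~\ref{PropositionCentralEstimateInNormalDirection} and obeys the stated bounds, in particular splitting these bounds cleanly into a part controlled by $\mathcal D$ alone (with constant depending only on $\eta,r_0$) and a part proportional to $\Gnorm{m}{u}^2$ that necessarily carries the factor $1/\gamma$, and checking that the exponential rate $C_1$ acquires no $m$-dependence through the iteration.
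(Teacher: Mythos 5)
Your proposal is correct, and it reorganizes the paper's argument in a genuinely different way. The paper inducts on the total order $m$: the inductive hypothesis (the full Theorem~\ref{TheoremAPrioriEstimates} for $m-1$) is applied to the derivatives $\partial_p u$, $p\in\{0,1,2\}$, which solve the IBVP~\eqref{IBVP} with modified data $(f_{1,p},\partial_p g,\partial_p u_0)$, while Proposition~\ref{PropositionCentralEstimateInNormalDirection} is invoked only once per level, for the pure normal derivative $\partial_3^{m-1}u$; Lemma~\ref{LemmaCentralEstimateInTangentialDirections} enters only through the base case $m=1$. You instead fix $m$ and induct on the normal‑derivative count $k=\alpha_3$: Lemma~\ref{LemmaCentralEstimateInTangentialDirections} at full order $m$ settles $k=0$, after which Proposition~\ref{PropositionCentralEstimateInNormalDirection} is applied to every $w=\partial^{\alpha'}u$ with $\alpha'_3=k$, $|\alpha'|\le m-1$, viewed as a solution of the IVP~\eqref{IVP} only (no boundary condition is needed or preserved after the base case). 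Unwinding the paper's recursion shows that in the end both strategies apply the same two tools to essentially the same family of derivatives of $u$, and both give the exponential rate $e^{mC_1T}$ with $C_1$ independent of $m$; the constant bookkeeping is also identical, since the per‑step multiplicative factor $e^{C_1T}(C_{1,0}+TC_1)$ accumulates to $(C_{1,0}+TC_1)^m e^{mC_1T}$, which one rewrites as $(C_{m,0}+TC_m)e^{mC_1T}$ exactly as you indicate. What the paper's organization buys is a cleaner inductive statement (only the modified data $f_{1,p}$ and $f_{m,3}$ have to be estimated explicitly, cf.~\eqref{EquationEstimateOffInZero} and~\eqref{EquationEstimatefm3InZero}); what yours buys is a flatter proof that avoids re-establishing the IBVP structure for $\partial_p u$ and treats all normal‑derivative increments uniformly. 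The only places where your sketch is deliberately compressed — the claim that $\Hhn{1}{u_{0,\alpha'}}^2+\Ltwohn{f_{\alpha'}(0)}^2\le C_{m,0}(\eta,r_0)\mathcal D$ for general $\alpha'$ with $\alpha'_3>0$, and that $\Hangamma{1}{f_{\alpha'}}^2\le C_m(\Hangamma{m}{f}^2+\Gnorm{m}{u}^2)$ — do hold, by writing $\partial^{\alpha'-\beta}\partial_j u(0)$ and $\partial^{\alpha'-\beta}u(0)$ via~\eqref{EquationTimeDerivativesOfSolution}, using Lemma~\ref{LemmaEstimatesForHigherOrderInitialValues}, and checking (with $\tilde m=\max\{m,3\}$) that the orders in the product estimates of Lemma~\ref{LemmaRegularityForA0} add to at least $2$; the $A_1,A_2$ terms with $\beta_0>0$ even vanish since those coefficients are time‑independent.
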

\begin{proof}
  We prove the assertion by induction with respect to $m$. To this purpose we observe that 
 combining Lemma~\ref{LemmaEllerResult}, Lemma~\ref{LemmaCentralEstimateInTangentialDirections}, and 
 Proposition~\ref{PropositionCentralEstimateInNormalDirection}, and choosing $\gamma_1 = \gamma_1(\eta, r, T')$ large enough, 
 we obtain the assertion for $m = 1$. Next assume that $m \geq 2$ and that the assertion has been shown for $m-1$. 
 
 We now take $u$, data, and coefficients as in the statement. Let $p \in \{0,1,2\}$. As in~\eqref{EquationOneTimesDiff} we deduce that $\partial_p u$ solves~\eqref{IBVP} with differential 
 operator $L(A_0,\ldots, A_3, D)$, inhomogeneity $f_{1,p}$, boundary value $\partial_p g$, and initial value $\partial_p u_0$, where
 \begin{align}
 \label{EquationDefinitionf1p}
  f_{1,p} &= \partial_p f - \sum_{i = 0}^2 \partial_p A_i \partial_i u - \partial_p D u, \\
  \partial_0 u_0 &= S_{m,1}(0,A_0,\ldots, A_3, D,f,u_0). \nonumber
 \end{align}
 Note that $f_{1,p}$ belongs to $\Ha{m-1}$ by Lemma~\ref{LemmaRegularityForA0}.
 Since $A_0 \in \Fupdwl{\tilde{m}}{\operatorname{cp}}{\eta}$, 
 $A_1, A_2 \in \Fcoeff{\tilde{m}}{cp}$,
 $D \in \Fuwl{\tilde{m}}{\operatorname{cp}}$, $f \in \Ha{m}$, and $u_0 \in \Hh{m}$, 
 Lemma~\ref{LemmaEstimatesForHigherOrderInitialValues} yields that $S_{m,1}(0,A_0,\ldots, A_3,D,f,u_0)$ is contained in 
 $\Hh{m-1}$. The induction hypothesis therefore gives
 \begin{align}
 \label{EquationEstimatingPartialPu}
  &\Gnorm{m-1}{\partial_p u}^2  \leq (C_{m-1,0} + T C_{m-1}) e^{(m-1) C_1 T} \Big( \sum_{j = 0}^{m-2}\Hhn{m-2-j}{\partial_t^j f_{1,p}(0)}^2  \nonumber\\
	&\hspace{1em} + \Enorm{m-1}{\partial_p g}^2 + \Hhn{m-1}{\partial_p u_0}^2 \Big) + \frac{C_{m-1}}{\gamma}  \Hangamma{m-1}{f_{1,p}}^2    
 \end{align}
 for all $\gamma \geq \gamma_{m-1}$. 
 We next estimate the terms appearing on the right-hand side of~\eqref{EquationEstimatingPartialPu}. To that purpose, 
 let $j \in \{0,\ldots,m-2\}$. We observe that
 \begin{align*}
  \partial_t^j (\partial_p A_0 \partial_t u)(0) = \sum_{l = 0}^j \binom{j}{l} \partial_t^l \partial_p A_0(0) S_{m,j+1-l}(0,A_0,\ldots, A_3, D,f,u_0).
 \end{align*}
 Since the function $\partial_t^l \partial_p A_0(0)$ belongs to $\Hh{\tilde{m}-l-2}$ and $S_{m,j+1-l}$ is contained in $\Hh{m-j+l-1}$ 
 by Lemma~\ref{LemmaEstimatesForHigherOrderInitialValues}, Lemma~\ref{LemmaRegularityForA0}~\ref{ItemProductInL2} 
 in the case $j = m-2$ and Lemma~\ref{LemmaRegularityForA0}~\ref{ItemProductIn0} in the case $j < m-2$ show
 \begin{align*}
  &\Hhn{m-2-j}{\partial_t^l \partial_p A_0(0) S_{m,j+1-l}(0,A_0,\ldots, A_3,D,f,u_0)} \\
  &\leq C_{\ref{LemmaEstimatesForHigherOrderInitialValues}; m,j-l+1}(\eta,r_0) r_0 \Big(\sum_{k=0}^{j - l} \Hhn{m-1-k}{\partial_t^k f(0)} + \Hhn{m}{u_0} \Big),
 \end{align*}
 where we also applied Lemma~\ref{LemmaEstimatesForHigherOrderInitialValues}. Arguing analogously,
 for $A_1$, $A_2$, and $D$, we arrive at
 \begin{align}
  \label{EquationEstimateOffInZero}
  \Hhn{m-2-j}{\partial_t^j f_{1,p}(0)} \leq C(\eta,r_0) \Big(\sum_{k=0}^{m-1} \Hhn{m-1-k}{\partial_t^k f(0)} + \Hhn{m}{u_0} \Big).
 \end{align}
 Lemma~\ref{LemmaRegularityForA0} further yields
 \begin{equation}
 \label{EquationEstimatefInHanmminus1}
  \Hangamma{m-1}{f_{1,p}} \leq \Hangamma{m}{f} + C(m,r) \Gnorm{m}{u}
 \end{equation}
 for all $\gamma > 0$.

We insert the estimates~\eqref{EquationEstimateOffInZero} and~\eqref{EquationEstimatefInHanmminus1} 
into~\eqref{EquationEstimatingPartialPu} and combine it with the induction hypothesis and 
Lemma~\ref{LemmaEstimatesForHigherOrderInitialValues} in the case $p = 0$ to infer
\begin{align}
\label{EquationEstimateForTangentialDerivativesInGmminusone}
 &\Gnorm{m-1}{u}^2 + \sum_{p = 0}^2 \Gnorm{m-1}{\partial_p u}^2  \\
 &\leq (\tilde{C}_{m,0} + T \tilde{C}_{m})e^{(m-1) C_1 T} \Big(\sum_{k=0}^{m-1} \Hhn{m-1-k}{\partial_t^k f(0)}^2 + \Hhn{m}{u_0}^2 \nonumber\\
 &\hspace{2em} + \Enorm{m}{g}^2 \Big)  + \frac{\tilde{C}_{m}}{\gamma} \Big(\Hangamma{m}{f}^2 + \Gnorm{m}{u}^2 \Big) \nonumber
\end{align}
for all $\gamma \geq \gamma_{m-1}$, where $\tilde{C}_{m,0} = \tilde{C}_{m,0}(\eta,  r_0)$ and $\tilde{C}_{m} = \tilde{C}_{m}(\eta,  r, T')$ denote constants which may change from line to line.

 It only remains to control the $\Ggamma{0}$-norm of $\partial_3^m u$. To this purpose, we note 
 that $\partial_3^{m-1} u \in  \G{1}$ solves the initial value problem
 \begin{equation}
 \label{EquationIVPForPartial3mminus1u}
 \left\{\begin{aligned}
   L v &= f_{m,3}, &\quad &x \in \R^3_+, & \quad &t \in J; \\
   v(0) &= \partial_3^{m-1} u_0, & &x \in \R^3_+;& &
\end{aligned}\right.
\end{equation}
where 
\begin{equation*} 
 f_{m,3} = \partial_3^{m-1} f - \sum_{0 < j \leq m-1} \binom{m-1}{j} \Big(\sum_{k = 0}^2 \partial_3^j A_k \, \partial_k \partial_3^{m-1-j} u
    + \partial_3^j D \, \partial_3^{m-1-j} u\Big).
\end{equation*} 
Lemma~\ref{LemmaRegularityForA0}~\ref{ItemProductHkappa} implies that
 $f_{m,3}$ belongs to $\Ha{1}$ with
\begin{align}
\label{EquationEstimateInH1Normfm3}
 \Hangamma{1}{f_{m,3}}^2 &\leq C(r,T') \Big(\Hangamma{m}{f}^2 + \Gnorm{m}{u}^2\Big),
\end{align}
for all $\gamma > 0$.
As $\partial_3^j A_k(0)$ is an element of $\Hh{\tilde{m}-1-j}$ and $\partial_3^{m-1-j} \partial_k u(0)$ of 
$\Hh{j}$, Lemma~\ref{LemmaRegularityForA0}~\ref{ItemProductInL2} yields
\begin{align*}
 &\Ltwohn{\partial_3^j A_k(0) \partial_3^{m-1-j} \partial_k u(0)} \leq C(\eta,r_0) (\Hhn{m-1}{f(0)} + \Hhn{m}{u_0})
\end{align*}
for all $k \in \{0, \ldots, 2\}$ and $j \in \{1, \ldots, m-1\}$, where we employed 
Lemma~\ref{LemmaEstimatesForHigherOrderInitialValues} for the case $k = 0$. 
The same arguments applied to $D$ allow us to conclude
\begin{align}
\label{EquationEstimatefm3InZero}
 \Ltwohn{f_{m,3}(0)}^2 \leq C(\eta,r_0) \Big(\Hhn{m-1}{f(0)}^2 + \Hhn{m}{u_0}^2\Big).
\end{align}

On the other hand Proposition~\ref{PropositionCentralEstimateInNormalDirection} applied to~\eqref{EquationIVPForPartial3mminus1u} tells us that
\begin{align*}
 &\Gnorm{0}{\partial_3^{m} u}^2 \leq    (C_{1,0} + T C_1) e^{C_1 T}
      \Big(\sum_{j=0}^2 \Gnorm{0}{\partial_j \partial_3^{m-1} u}^2  + \Ltwohn{f_{m,3}(0)}^2  \\
    &\hspace{16em} + \Hhn{1}{\partial_3^{m-1} u_0}^2\Big) + \frac{C_1}{\gamma} e^{C_1 T} \Hangamma{1}{f_{m,3}}^2 
\end{align*}
for all $\gamma \geq 1$. Combined with~\eqref{EquationEstimateInH1Normfm3} and~\eqref{EquationEstimatefm3InZero} 
the above inequality implies
\begin{align}
\label{EquationEstimateForPartial3mtimesu}
    &\Gnorm{0}{\partial_3^m u}^2 \leq  (\tilde{C}_{m,0} + T \tilde{C}_m)e^{C_1 T} \Big(\sum_{j=0}^2 \Gnorm{m-1}{\partial_j u}^2  
     +  \Hhn{m-1}{f(0)}^2 \nonumber\\ 
    &\hspace{10em} + \Hhn{m}{u_0}^2 \Big) 
    +  \frac{\tilde{C}_m}{\gamma} \Big(\Hangamma{m}{f}^2 + \Gnorm{m}{u}^2 \Big) 
\end{align}
for all $\gamma \geq 1$. We then use~\eqref{EquationEstimateForPartial3mtimesu} to estimate
\begin{align*}
 &\Gnorm{m}{u}^2 \leq \Gnorm{m-1}{u}^2 + \sum_{j = 0}^2 \Gnorm{m-1}{\partial_j u}^2 + \Gnorm{0}{\partial_3^{m} u}^2 \\
	&\leq  (\tilde{C}_{m,0} + T \tilde{C}_m)e^{C_1 T}  \Big(\sum_{j=0}^2 \Gnorm{m-1}{\partial_j u}^2 + \Gnorm{m-1}{u}^2 \Big) + \frac{\tilde{C}_m}{\gamma} \Gnorm{m}{u}^2 \nonumber \\
    &\quad  +  (\tilde{C}_{m,0} + T \tilde{C}_m)e^{C_1 T} \Big( \Hhn{m-1}{f(0)}^2 + \Hhn{m}{u_0}^2 \Big) +  \frac{\tilde{C}_m}{\gamma} \Hangamma{m}{f}^2 
\end{align*}
for all $\gamma \geq 1$. Together with~\eqref{EquationEstimateForTangentialDerivativesInGmminusone}, it follows
\begin{align}
\label{EquationEstimateForuInGnormmAlmostFinal}
 \Gnorm{m}{u}^2 &\leq (\tilde{C}_{m,0} + T \tilde{C}_{m})e^{m C_1 T} \Big(\sum_{k=0}^{m-1} \Hhn{m-1-k}{\partial_t^k f(0)}^2 + \Enorm{m}{g}^2 \nonumber\\
 &\hspace{2em}  + \Hhn{m}{u_0}^2 \Big) + \frac{\tilde{C}_{m}}{\gamma} \Big( \Hangamma{m}{f}^2 + \Gnorm{m}{u}^2\Big)
\end{align}
for all $\gamma  \geq \gamma_{m-1}$. Choosing $\gamma_m = \gamma_m(\eta,r,T')$ large enough, the assertion follows.
\end{proof}


\section{Regularity of solutions}
\label{SectionRegularity}

In order to prove that the solution of~\eqref{IBVP} belongs to $\G{m}$ if the data and the 
coefficients are accordingly smooth and compatible, we have to apply different
regularizing techniques 
in normal, tangential, and time direction. We start by showing that regularity in time and 
in tangential directions implies regularity in normal direction. The main difficulty here is 
to avoid a loss of regularity across the boundary of $\partial \R^3_+$.
\begin{lem}
 \label{LemmaRegularityInNormalDirection}
Let $\eta > 0$, $m \in \N$, and $\tilde{m} =  \max\{m,3\}$. Take $A_0 \in \Fupdwl{\tilde{m}}{cp}{\eta}$, 
$A_1, A_2 \in \Fcoeff{\tilde{m}}{\operatorname{cp}}$, $A_3 = A_3^{\operatorname{co}}$,
and $D \in \Fuwl{\tilde{m}}{cp}$. Pick $f \in \Ha{m}$, 
and $u_0 \in \Hh{m}$. 
Let $u$ be a solution of the linear initial value problem~\eqref{IVP} with differential operator $L = L(A_0,\ldots, A_3, D)$, inhomogeneity 
$f$, and initial value $u_0$. Assume that $u$ belongs to $\bigcap_{j = 1}^m C^j(\clJ, \Hh{m-j})$.

Take $k \in \{1, \ldots, m\}$ and a multi-index $\alpha \in \N_0^4$ with $|\alpha| = m$, $\alpha_0 = 0$,
and $\alpha_3 = k$. 
Suppose that $\partial^\beta u$ is contained in $\G{0}$ for all $\beta \in \N_0^4$ with $|\beta| = m$ and 
$\beta_3 \leq k-1$.
Then $\partial^\alpha u$ is an element of $\G{0}$.
\end{lem}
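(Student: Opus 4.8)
The plan is to imitate the proof of Proposition~\ref{PropositionCentralEstimateInNormalDirection}, but with $u$ replaced by the spatial derivative $v:=\partial^{\alpha'}u$, where $\alpha':=(\alpha_0,\alpha_1,\alpha_2,\alpha_3-1)$; thus $|\alpha'|=m-1$, $\alpha'_0=0$, $\alpha'_3=k-1$, and $\partial^\alpha u=\partial_3\partial^{\alpha'}u$. First I would note that $v$ solves the initial value problem~\eqref{IVP} with initial value $\partial^{\alpha'}u_0\in\Hh{1}$ and with an inhomogeneity $f_{\alpha'}$ obtained by commuting $\partial^{\alpha'}$ through $L$: it is the sum of $\partial^{\alpha'}f$ and of the commutator terms $\partial^\beta A_j\,\partial^{\alpha'-\beta}\partial_j u$ ($j=0,1,2$) and $\partial^\beta D\,\partial^{\alpha'-\beta}u$ with $0<\beta\leq\alpha'$ --- there being \emph{no} contribution involving $A_3=A_3^{\operatorname{co}}$, since this matrix is constant. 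Using Lemma~\ref{LemmaRegularityForA0}, the assumption $u\in\bigcap_{j=1}^m C^j(\clJ,\Hh{m-j})$, the hypothesis that $\partial^\beta u\in\G{0}$ whenever $|\beta|=m$ and $\beta_3\leq k-1$, and the elementary embedding $\Ha{1}\hookrightarrow C(\clJ,\Ltwoh)$, one checks that $f_{\alpha'}\in\Ha{1}$: every summand is a product of a coefficient derivative of order $\leq\tilde m$ with a derivative of $u$ of order $\leq m$ which either carries a time derivative or has $\partial_3$-order at most $k-1$, hence lies in $\G{0}$.

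Next I would dispose of the components of $\partial^\alpha u$ transverse to $\ker A_3^{\operatorname{co}}$, which require no new idea. Since $A_3^{\operatorname{co}}$ is constant, differentiating the evolution equation yields $A_3^{\operatorname{co}}\,\partial^\alpha u=\partial^{\alpha'}\!\big(f-A_0\partial_t u-A_1\partial_1 u-A_2\partial_2 u-Du\big)$, whose right-hand side lies in $\G{0}$ by the same term-by-term reasoning. As $A_3^{\operatorname{co}}$ has rank four, reading off components (as in the reduction leading to~\eqref{EquationFullRankMatrixMtildeInFrontOfPartial3u}) shows that four of the six components of $\partial^\alpha u$ already belong to $\G{0}$.

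The heart of the matter is the remaining two components --- those in $\ker A_3^{\operatorname{co}}$, which the equation cannot detect. Here I would exploit the divergence structure exactly as in the proof of Proposition~\ref{PropositionCentralEstimateInNormalDirection}, by applying $\partial^{\alpha'}$ to the whole construction there: to $\partial_t(\tilde{\mu}^T A_0\nabla u)$, to the functions $F_7,F_8$ of~\eqref{EquationDefinitionF7F8}, and to the full-rank identity~\eqref{EquationFullRankEquationforNormalDerivativeOfu} and its reduction~\eqref{EquationFullRankMatrixMtildeInFrontOfPartial3u}. The decisive point is that the cancellations~\eqref{EquationTraceSmaller3EqualsZero} and~\eqref{EquationTraceLarger3EqualsZero}, being purely algebraic consequences of the antisymmetry of the Levi-Civita symbol and the time-independence of $\tilde{\mu}$ and $A_3^{\operatorname{co}}$, survive this differentiation, so that $\partial^{\alpha'}F_7$ and $\partial^{\alpha'}F_8$ still contain only first-order spatial derivatives of $u$ under a time integral, together with the boundary-type terms $\partial^{\alpha'}\big((\tilde{\mu}^T A_0)_{\ell\cdot}\partial_\ell u\big)$, $\ell\in\{1,2\}$, which are of order $m$ with $\partial_3$-order $k-1$ and hence lie in $\G{0}$. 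When the resulting terms are sorted, $\partial^\alpha u$ occurs only inside the integral $\int_0^t(\cdots)\,ds$ and multiplied by an $L^\infty$ coefficient, because every $\partial^{\alpha'}\partial_j u$ with $j\in\{0,1,2\}$ has $\partial_3$-order $\leq k-1$ or carries a time derivative and thus lies in $\G{0}$; all other terms are lower-order derivatives of $u$ times coefficient derivatives of order $\leq\tilde m$ (handled by Lemma~\ref{LemmaRegularityForA0}), or $\partial^{\alpha'}f$ and $\partial^{\alpha'}$ of the generalized divergence $\Div(A_1,A_2,A_3)f$, which lie in $\Ltwoa$ and so contribute a continuous $\Ltwoh$-valued function after integration. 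This leaves a linear Volterra equation $w(t)=\Psi(t)+\int_0^t K(s)\,w(s)\,ds$ for the pair $w$ formed by the two $\ker A_3^{\operatorname{co}}$-components of $\partial^\alpha u$, with $\Psi\in\G{0}$ and $K\in L^\infty(\Omega)$; the Gronwall argument of Proposition~\ref{PropositionCentralEstimateInNormalDirection} then gives $w\in C(\clJ,\Ltwoh)$, so that $\partial^\alpha u\in\G{0}$.

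The obstacles I expect are twofold. One is the bookkeeping needed to confirm that the Maxwell cancellation genuinely persists through the $m-1$ additional spatial differentiations, so that no uncontrolled top-order normal derivative is left outside the time integral. The other is that a priori $\partial^\alpha u$ is merely a distribution, so the Volterra equation must be set up carefully: I would first extract the four transverse components (which are visibly in $\G{0}$) so that the integral equation concerns only the two $\ker A_3^{\operatorname{co}}$-components, and then identify this pair with the unique $C(\clJ,\Ltwoh)$-solution of the equation. An alternative is to run the whole argument for the mollifications of $u$ in the $x_3$-variable --- which is legitimate at every step and, crucially, introduces no commutator with $A_3^{\operatorname{co}}\partial_3$ precisely because $A_3^{\operatorname{co}}$ is constant --- and then to let the mollification parameter tend to $0$.
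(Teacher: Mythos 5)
Your high-level picture is right: the cancellation~\eqref{EquationTraceSmaller3EqualsZero}--\eqref{EquationTraceLarger3EqualsZero} survives the application of $\partial^{\alpha'}$, and the strategy is to replay the Gronwall argument of Proposition~\ref{PropositionCentralEstimateInNormalDirection} with $\partial^{\alpha'}u$ in place of $u$. But the proposal leaves unresolved precisely the obstacle that makes this lemma hard, and the way you propose to handle it does not work as stated.

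The circularity you need to break is that the argument of Proposition~\ref{PropositionCentralEstimateInNormalDirection} already presupposes $\nabla v\in L^\infty(J,\Ltwoh)$ for the function $v$ it is applied to: the identity $\partial_t(\tilde\mu^TA_0\nabla v)=\Lambda-\tilde\mu^T\sum_jA_j\nabla\partial_jv$ is taken in $L^\infty(J,\Hh{-1})$ and then integrated in time \emph{with values in $\Ltwoh$}, which uses that $\Lambda$ is $\Ltwoh$-valued, i.e.\ that $\nabla v$ is square integrable. If you set $v=\partial^{\alpha'}u$, then $\partial_3v=\partial^\alpha u$ is exactly the quantity to be controlled; a priori it is only a distribution. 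Your Volterra reformulation has the same problem: to even write $w(t)=\Psi(t)+\int_0^tK(s)w(s)\,ds$ you need the pointwise-in-time traces and the integrand to live in $\Ltwoh$, and identifying the abstract fixed point $w$ with the distributional derivative $\partial^\alpha u$ afterwards requires a density/approximation argument that is essentially the same work you are trying to avoid. Your final alternative --- ``run the whole argument for the mollifications of $u$ in the $x_3$-variable and let the parameter tend to $0$'' --- is closer to the truth, but it is not ``legitimate at every step'': mollifying in $x_3$ needs values of $\partial^{\alpha'}u$ across $\partial\R^3_+$, and a naive reflection or zero extension loses a derivative at the boundary. The absence of a commutator with $A_3^{\operatorname{co}}\partial_3$ is genuinely helpful, but it is not the bottleneck.

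The paper overcomes both of these issues with a two-parameter regularization that you did not anticipate. It first translates inward by $T_\delta$, then mollifies with $M_\epsilon$ for $\epsilon<\delta$, so that $M_\epsilon T_\delta\partial^{\alpha'}u$ is smooth up to the boundary and Proposition~\ref{PropositionCentralEstimateInNormalDirection} applies unconditionally. It then shows via commutator estimates (Theorem~C.14 in~\cite{BenzoniGavage}) that $L_\delta M_\epsilon T_\delta\partial^{\alpha'}u$ and $\Div_\delta L_\delta M_\epsilon T_\delta\partial^{\alpha'}u$ converge as $\epsilon\to0$, giving a Cauchy sequence and hence $T_\delta\partial^{\alpha'}u\in C(\clJ,\Hh1)$ for every fixed $\delta>0$. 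The limit $\delta\to0$ is \emph{not} a soft limit: one works with the restrictions $R_\delta$ to $\R^2\times(\delta,\infty)$, proves a uniform bound using Fatou's lemma, and establishes strong measurability of $\partial_3\partial^{\alpha'}u$ before finally invoking Proposition~\ref{PropositionCentralEstimateInNormalDirection} once more. These are the two-thirds of the proof that your proposal compresses into one sentence, and they cannot be omitted.
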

\begin{proof} 
 I) We have to start with several preparations.  
 Let $\rho \in C_c^\infty(\R^3)$ be a nonnegative function with $\int_{\R^3} \rho(x) dx = 1$ and $\supp \rho \subseteq B(0,1)$.
 We denote the convolution operator with kernel
 $\rho_{\epsilon} = \epsilon^{-3} \rho(\epsilon^{-1} \cdot)$ by $M_\epsilon$ for all $\epsilon > 0$, where 
 the convolution is taken over $\R^3$. We further define the translation operator
 \begin{align}
 \label{EquationDefinitionTranslation}
  T_\tau v(x) = v(x_1,x_2, x_3 + \tau)
 \end{align}
 for all $v \in L^1_{\operatorname{loc}}(\R^3_+)$, $\tau > 0$, and for almost all $x \in \R^2 \times (-\tau, \infty)$.
 Clearly, $T_\tau$ maps $W^{l,p}(\R^3_+)$ continuously into 
 $W^{l,p}(\R^2 \times (-\tau, \infty))$ and $\partial^{\tilde{\alpha}} T_\tau v = T_\tau \partial^{\tilde{\alpha}} v$
 for all $v \in W^{l,p}(\R^3_+)$, $\tilde{\alpha} \in \N_0^4$ with $|\tilde{\alpha}| \leq l$, $l \in \N_0$, $1 \leq p \leq \infty$,
 and $\tau > 0$. 
 If $v \in L^1_{\operatorname{loc}}(\R^3)$, we further define $T_{\tau} v$ by formula~\eqref{EquationDefinitionTranslation} 
 for all $\tau \in \R$.
 
 Functions which are only defined on a subset of $\R^3$ will be identified with their zero-extensions 
 in the following.
 We extend the translations $T_\delta$ to continuous operators on $H^{-1}(\R^3_+)$ by 
 setting
 \begin{align*}
    &\langle T_\delta v, \psi \rangle_{H^{-1}(\R^3_+) \times H^1_0(\R^3_+)} = \langle v, T_{-\delta} \psi \rangle_{H^{-1}(\R^3_+) \times H^1_0(\R^3_+)} 
 \end{align*}
 for all  $\psi \in H^1_0(\R^3_+)$ and $\delta > 0$. Since partial derivatives commute with $T_{-\delta}$ and 
 the zero extension on $H^1_0(\R^3_+)$, we thus also have
 \begin{align}
 \label{EquationDerivativeAndTranslationCommuteOnHMinusOne}
  \partial_j T_\delta v  = T_\delta \partial_j v
 \end{align}
 for all $v \in \Ltwoh$ and $\delta > 0$.
 
 We next take a closer look on the convolution operator $M_\epsilon$, which is defined for functions in 
 $L^1_{\operatorname{loc}}(\R^3)$. We want to extend this operator in a sense to functions in 
 $L^1_{\operatorname{loc}}(\R^3_+)$ without obtaining singularities at the boundary.
 To that purpose,
 take $0 < \epsilon < \delta$. For functions $v$ in $L^1_{\operatorname{loc}}(\R^3_+)$ 
 we will employ the regularization $M_\epsilon T_\delta v$ and restrict it to $\R^3_+$.
 As it will be clear from the context on which domain we consider $M_\epsilon T_\delta v$, we will not 
 write this restriction explicitly.
 It is easy to see that if $v$ has a weak derivative in $\R^3_+$, then also $M_\epsilon T_\delta v$ has a weak derivative
 in $\R^3_+$ and
 \begin{align*}
  \partial_j M_\epsilon T_\delta  v = M_\epsilon T_\delta  \partial_j v
 \end{align*}
 for all $j \in \{1,2,3\}$.
 
 We define $\tilde{\rho}$ by $\tilde{\rho}(x) = \rho(-x)$ for all $x \in \R^3$. The 
 convolution operator with kernel $\tilde{\rho}_\epsilon$ is denoted by $\tilde{M}_\epsilon$ for all $\epsilon > 0$.
 Fix $0 < \epsilon < \delta$. A straightforward computation shows that
 \begin{align}
    \label{DefinitionConvolutionOnHMinusOneDomain}
 &\langle M_\epsilon T_\delta  v, \psi \rangle_{H^{-1}(\R^3_+) \times H^1_0(\R^3_+)} 
 = \langle  v, T_{-\delta} \tilde{M}_\epsilon  \psi \rangle_{H^{-1}(\R^3_+) \times H^1_0(\R^3_+)}
\end{align}
for all $ v \in \Ltwoh$ and $\psi \in H^1_0(\R^3_+)$.
 As $T_{-\delta} \tilde{M}_\epsilon$ maps $H^1_0(\R^3_+)$ continuously into itself, the mapping
 $M_\epsilon T_\delta$ continuously extends to 
 an operator on $H^{-1}(\R^3_+)$ via formula~\eqref{DefinitionConvolutionOnHMinusOneDomain}.
  We deduce the identity
\begin{align*}
 \partial_j M_\epsilon T_\delta v = M_\epsilon \partial_j T_\delta v =  M_\epsilon T_\delta  \partial_j v
\end{align*}
by duality for all $j \in \{1,2,3\}$ and $v \in \Ltwoh$ using that the partial derivative commutes with 
$T_{-\delta}$, $\tilde{M}_\epsilon$, and the zero extension on $H^1_0(\R^3_+)$.
We further note that for $A \in W^{1,\infty}(\R^3_+)$ and $v \in \Hh{-1}$ we have
\begin{align}
  \label{EquationTranslationAndCoefficient}
  (T_\delta A) T_\delta v =  T_\delta (A v)
\end{align}
in $\Hh{-1}$.

II) Let $0 < \epsilon < \delta$. In the following, we abbreviate the differential operator 
$L(T_\delta A_0, \ldots, T_\delta A_3, T_\delta D)$ by $L_\delta$ and $\Div(T_\delta A_1, T_\delta A_2, T_\delta A_3)$ by 
$\Div_\delta$. (Recall~\eqref{EquationDefinitionOfGeneralizedDiv}.) We define $\alpha' = \alpha - e_3 \in \N_0^4$ and note that 
$|\alpha'| = m-1$ and $\alpha'_3 = k-1$. In particular, $\partial^{\alpha'} u$ belongs to $\G{0}$ by assumption. 
Due to the mollifier the function $M_\epsilon T_\delta \partial^{\alpha'} u$ is contained in 
$C^1(\clJ, \Hh{1}) \hookrightarrow \G{1}$, $M_\epsilon T_\delta \partial^{\alpha'} u_0$ is an 
element of $\Hh{1}$,  
$L_\delta M_\epsilon T_\delta \partial^{\alpha'} u$ is contained in $\G{0}$, 
and $\Div_\delta L_\delta M_\epsilon T_\delta \partial^{\alpha'} u$ belongs to $\Ltwoa$. 
We want to apply 
estimate~\eqref{EquationFirstOrderFinal} from Proposition~\ref{PropositionCentralEstimateInNormalDirection} with 
differential operator $L_\delta$ 
to differences of functions $M_\epsilon T_\delta \partial^{\alpha'} u$ and show that they form 
a Cauchy sequence in $\Hh{1}$ as $\epsilon$ tends to $0$.
Therefore, we have to study the convergence properties of $L_\delta M_\epsilon T_\delta \partial^{\alpha'} u$ 
and $\Div_\delta L_\delta M_\epsilon T_\delta \partial^{\alpha'} u$ as $\epsilon \rightarrow 0$. 
We focus on the latter as this is the more difficult term.

We fix the functions $\mu_{lj} \in \Fuwlk{\tilde{m}}{cp}{1}$ (independent of time) with
\begin{align*}
 A_j = \sum_{l = 1}^3 A_l^{\operatorname{co}} \mu_{lj}
\end{align*}
for all $j \in \{1, 2, 3\}$ which exist by the definition of $\Fcoeff{\tilde{m}}{cp}$. We set 
 \begin{equation*}
    \tilde{\mu} = \begin{pmatrix}
		  \mu &0 \\
		  0   &\mu
        \end{pmatrix}
 \end{equation*}
  and compute
 \begin{align}
  \label{EquationPreparationToComputeDivOfLMepsu}
  &(T_\delta \tilde{\mu})^T \nabla L_\delta M_\epsilon T_\delta \partial^{\alpha'} u \\
  &= \sum_{j = 0}^2 (T_\delta \tilde{\mu})^T (T_\delta \nabla A_j) \partial_j M_{\epsilon} T_\delta \partial^{\alpha'} u + (T_\delta \tilde{\mu})^T (T_\delta \nabla D) M_\epsilon T_\delta \partial^{\alpha'} u   \nonumber\\
  &\quad \! + T_\delta (\tilde{\mu}^T A_0) \nabla M_\epsilon T_\delta \partial_t \partial^{\alpha'} u + T_\delta (\tilde{\mu}^T D) \nabla M_\epsilon T_\delta \partial^{\alpha'} u  + \! \sum_{j = 1}^3 T_\delta(\tilde{\mu}^T A_j) \nabla \partial_j M_{\epsilon} T_\delta \partial^{\alpha'} u \nonumber \\
  &=: \Lambda^{\delta,\epsilon} + \sum_{j = 1}^3 T_\delta(\tilde{\mu}^T A_j) \nabla \partial_j M_{\epsilon} T_\delta \partial^{\alpha'} u, \nonumber
 \end{align}
 where we exploited the results from step I). The cancellation properties 
 of the $L_\delta$-operator established in~\eqref{EquationTraceSmaller3EqualsZero} and~\eqref{EquationTraceLarger3EqualsZero} show that
 \begin{align*}
 	&\sum_{j = 1}^3  \Big(\sum_{k = 1}^3 (T_\delta(\tilde{\mu}^T A_j) \nabla \partial_j M_{\epsilon} T_\delta \partial^{\alpha'} u)_{kk}, \sum_{k = 1}^3 (T_\delta(\tilde{\mu}^T A_j) \nabla \partial_j M_{\epsilon} T_\delta \partial^{\alpha'} u)_{(k+3)k} \Big) = 0.
 \end{align*}
 We thus obtain from~\eqref{EquationPreparationToComputeDivOfLMepsu} that
 \begin{equation}
 	\label{EquationEstimateForDivFirstResult}
 	\Div_\delta  L_\delta M_\epsilon T_\delta \partial^{\alpha'} u = \Big(\sum_{k = 1}^3 \Lambda_{kk}^{\delta,\epsilon}, \sum_{k = 1}^3 \Lambda_{(k+3)k}^{\delta,\epsilon} \Big).
 \end{equation}
 We rewrite $\Lambda^{\delta, \epsilon}$ in the form
 \begin{align*}
 	\Lambda^{\delta, \epsilon} &= \sum_{j = 0}^2 [T_\delta (\tilde{\mu}^T \nabla A_j), M_\epsilon] \partial_j T_\delta \partial^{\alpha'} u  + [T_\delta(\tilde{\mu}^T \nabla D), M_\epsilon] T_\delta \partial^{\alpha'} u \\
 	&\quad  + [T_\delta (\tilde{\mu}^T A_0), M_\epsilon] \nabla  T_\delta \partial_t \partial^{\alpha'} u  + [T_\delta (\tilde{\mu}^T D), M_\epsilon] \nabla T_\delta \partial^{\alpha'} u\nonumber \\
 	&\quad + M_\epsilon T_\delta \Big( \sum_{j=0}^2 \tilde{\mu}^T \nabla A_j \partial_j \partial^{\alpha'} u + \tilde{\mu}^T \nabla D \partial^{\alpha'} u + \tilde{\mu}^T A_0 \nabla \partial_t \partial^{\alpha'} u + \tilde{\mu}^T D \nabla \partial^{\alpha'} u \Big),
 \end{align*}
 and introduce the function
 \begin{align*}
  \tilde{f}_{\alpha'} &= \sum_{0 < \beta \leq \alpha'} \binom{\alpha'}{\beta} \partial^{\beta}  (\tilde{\mu}^T A_0) \nabla \partial^{\alpha' - \beta} \partial_t u
			  + \sum_{0 < \beta \leq \alpha'} \binom{\alpha'}{\beta} \partial^{\beta}  (\tilde{\mu}^T D) \nabla \partial^{\alpha' - \beta} u \\
		      &\quad + \sum_{j = 0}^2 \sum_{0 < \beta \leq \alpha'} \binom{\alpha'}{\beta} \partial^\beta (\tilde{\mu}^T \nabla A_j) \partial^{\alpha' - \beta} \partial_j u  + \sum_{0 < \beta \leq \alpha'} \binom{\alpha'}{\beta} \partial^\beta (\tilde{\mu}^T \nabla D) \partial^{\alpha' - \beta}  u.
 \end{align*}
 As $u$ and $\partial_t u$ are contained in $C(\clJ, \Hh{m-1})$, Lemma~\ref{LemmaRegularityForA0} implies that 
 the function $\tilde{f}_{\alpha'}$ is an element of $\Ltwoa$. 
 With this definition at hand, we deduce
 	\begin{align*}
 	&\Lambda^{\delta, \epsilon}
 	= \sum_{j = 0}^2 [T_\delta (\tilde{\mu}^T \nabla A_j), M_\epsilon] \partial_j T_\delta \partial^{\alpha'} u  + [T_\delta(\tilde{\mu}^T \nabla D), M_\epsilon] T_\delta \partial^{\alpha'} u \\
 	&\quad + [T_\delta (\tilde{\mu}^T A_0), M_\epsilon] \nabla  T_\delta \partial_t \partial^{\alpha'} u + [T_\delta (\tilde{\mu}^T D), M_\epsilon] \nabla T_\delta \partial^{\alpha'} u + \partial^{\alpha'} M_\epsilon T_\delta (\tilde{\mu}^T \nabla f)  \nonumber \\
 	&\quad  - M_\epsilon T_\delta \tilde{f}_{\alpha'} -\sum_{j = 1}^3 \partial^{\alpha'} M_\epsilon T_\delta (\tilde{\mu}^T A_j \nabla \partial_j  u) \\
 	&=: \tilde{\Lambda}^{\delta, \epsilon}  -\sum_{j = 1}^3 \partial^{\alpha'} M_\epsilon T_\delta (\tilde{\mu}^T A_j \nabla \partial_j  u).
 \end{align*}
 The cancellation properties of the differential operator from~\eqref{EquationTraceSmaller3EqualsZero} and~\eqref{EquationTraceLarger3EqualsZero} imply 
 that
 \begin{align*}
 	\Big( \sum_{k = 1}^3 \Lambda_{kk}^{\delta, \epsilon}, \sum_{k = 1}^3 \Lambda_{(k+3)k}^{\delta, \epsilon} \Big) 
 	= \Big( \sum_{k = 1}^3 \tilde{\Lambda}_{kk}^{\delta, \epsilon}, \sum_{k = 1}^3 \tilde{\Lambda}_{(k+3)k}^{\delta, \epsilon} \Big).
 \end{align*}
 In view of~\eqref{EquationEstimateForDivFirstResult}, we conclude that
 \begin{align}
  \label{EquationDivOfLMepsEdeltau}
  &\Div_\delta L_\delta M_\epsilon T_\delta \partial^{\alpha'} u = \Big( \sum_{k = 1}^3 \tilde{\Lambda}_{kk}^{\delta, \epsilon}, \sum_{k = 1}^3 \tilde{\Lambda}_{(k+3)k}^{\delta, \epsilon} \Big).
 \end{align}
 Since $\partial_j \partial^{\alpha'} u$ and $\partial^{\alpha'} u$ belong to $C(\clJ, \Ltwoh)$ 
 and $\nabla A_j$ and $\nabla D$ are contained in $L^\infty(\Omega)$, we have
 \begin{align}
 \label{EquationConvergenceToZeroCommutatorWithNablaA0}
 	[T_\delta (\tilde{\mu}^T \nabla A_j), M_\epsilon] \partial_j T_\delta \partial^{\alpha'} u  + [T_\delta(\tilde{\mu}^T \nabla D), M_\epsilon] T_\delta \partial^{\alpha'} u \longrightarrow 0
 \end{align}
 in $\Ltwoa$ as $\epsilon$ tends to $0$ for all $j \in \{0,1,2\}$.
 For the remaining commutator terms we employ  estimates for the commutator of a 
 $W^{1,\infty}$-function with a mollifier. Take $j \in \{1,2,3\}$. To satisfy
 the assumptions of these commutator estimates, we extend the function $\tilde{\mu}^T A_0(t)$ 
 by reflection 
 at $\partial \R^3_+$ to a function in $W^{1,\infty}(\R^3)$ which we still denote by $\tilde{\mu}^T A_0(t)$ 
 for all $t \in \clJ$. 
 Theorem~C.14 of~\cite{BenzoniGavage} now yields that $[T_\delta (\tilde{\mu}^T A_0), M_\epsilon] \partial_j  T_\delta \partial_t \partial^{\alpha'} u$
 maps into $L^2(\R^3)$, where we identify as usual the function $\partial_t \partial^{\alpha'} u$ with 
 its zero extension to $\R^3$. In particular, $([T_\delta (\tilde{\mu}^T A_0), M_\epsilon] \partial_j  T_\delta \partial_t \partial^{\alpha'} u)(t)$ 
 is an element of $\Ltwoh$ and Theorem~C.14 of~\cite{BenzoniGavage} further shows that
  \begin{align}
   &\Ltwohn{([T_\delta (\tilde{\mu}^T A_0), M_\epsilon] \partial_j  T_\delta \partial_t \partial^{\alpha'} u)(t)} \label{EquationCommutatorEstimateForAj}\\
   &\leq \|([T_\delta (\tilde{\mu}^T A_0), M_\epsilon] \partial_j  T_\delta \partial_t \partial^{\alpha'} u)(t)\|_{L^2(\R^3)} \nonumber \\
   &\leq C \|T_\delta (\tilde{\mu}^T A_0)(t)\|_{W^{1,\infty}(\R^3)} \| T_\delta \partial_t \partial^{\alpha'} u(t)\|_{L^2(\R^3)} \nonumber \\
    & \leq C  \|(\tilde{\mu}^T A_0)(t)\|_{W^{1,\infty}(\R^3_+)} \, \Ltwohn{\partial_t \partial^{\alpha'} u(t)}, \nonumber  \\
   &\lim_{\epsilon \rightarrow 0} \Ltwohn{([T_\delta (\tilde{\mu}^T A_0), M_\epsilon] \partial_j  T_\delta \partial_t \partial^{\alpha'} u)(t)} = 0    
   \label{EquationCommutatorConvergingToZeroAj}
  \end{align}
  for all $t \in \clJ$. Hence, the theorem of dominated convergence implies
  \begin{equation*}
   \Ltwoanwgamma{[T_\delta (\tilde{\mu}^T A_0), M_\epsilon] \partial_j  T_\delta \partial_t \partial^{\alpha'} u} \longrightarrow 0
  \end{equation*}
  as $\epsilon \rightarrow 0$. In the same way we deduce that also the other remaining 
  commutators in $\tilde{\Lambda}^{\delta,\epsilon}$ converge to $0$ as $\epsilon \rightarrow 0$. 
  As $T_\delta(\tilde{\mu}^T \nabla f)$ belongs to $H^{m-1}(J \times \R^2 \times (-\delta, \infty))$ 
  and $\tilde{f}_{\alpha'}$ to $\Ltwoa$, we arrive at
 \begin{align}
  \label{EquationConvergenceOfDivergence}
  &\Div_\delta L_\delta M_\epsilon T_\delta \partial^{\alpha'} u \\
  &\longrightarrow \sum_{k = 1}^3 T_\delta \Big((\tilde{f}_{\alpha'} + \partial^{\alpha'}(\tilde{\mu}^T \nabla f))_{kk},  (\tilde{f}_{\alpha'} + \partial^{\alpha'}(\tilde{\mu}^T \nabla f))_{(k+3)k} \Big) =: T_\delta f_{\div,\alpha'}  \nonumber
 \end{align}
 in $\Ltwoa$ as $\epsilon \rightarrow 0$. In the same way we infer that $L_\delta M_\epsilon T_\delta \partial^{\alpha'} u$ 
 converges to $T_\delta f_{\alpha'}$ with
 \begin{equation*}
  f_{\alpha'}:= \partial^{\alpha'} f - \sum_{j = 0}^2 \sum_{0 < \beta \leq \alpha'} \binom{\alpha'}{\beta} \partial^\beta A_j \partial_j \partial^{\alpha' - \beta} u 
    - \sum_{0 < \beta \leq \alpha'} \binom{\alpha'}{\beta} \partial^\beta D \partial^{\alpha' - \beta} u
 \end{equation*}
 in $\G{0}$. Here one combines commutator estimates as in~\eqref{EquationCommutatorEstimateForAj} 
 and~\eqref{EquationCommutatorConvergingToZeroAj} with a standard compactness argument to derive 
 that the convergence is also uniform in~$t$, see~\cite[Lemma~4.1]{SpitzDissertation} for details.
 
 Next take $\eta, r > 0$ with $A_0 \geq \eta$, $\|A_i\|_{W^{1,\infty}(\Omega)} \leq r$,
 and $\|D\|_{W^{1,\infty}(\Omega)} \leq r$ for all $i \in \{0,\ldots, 3\}$. Note that 
 in particular 
 $\|A_i(0)\|_{L^\infty(\R^3_+)} \leq r$ and $\|D(0)\|_{L^\infty(\R^3_+)} \leq r$ for $i \in \{0, \ldots, 3\}$. 
 Now let $\delta > 0$ and take $n_\delta \in \N$ with $n_\delta^{-1} < \delta$. 
 Fix a number $\gamma \geq 1$ and 
 define the constant $C' = C'(\eta,r, T)$ by
 \begin{align}
 \label{EquationIntroductionOfCPrime}
  C' = \Big(C_{\ref{PropositionCentralEstimateInNormalDirection}; 1,0}+ T C_{\ref{PropositionCentralEstimateInNormalDirection}; 1} + \frac{C_{\ref{PropositionCentralEstimateInNormalDirection}; 1}}{\gamma} \Big) e^{C_{\ref{PropositionCentralEstimateInNormalDirection}; 1} T}
 \end{align}
 where $C_{\ref{PropositionCentralEstimateInNormalDirection}; 1,0} = C_{\ref{PropositionCentralEstimateInNormalDirection}; 1,0}(\eta,r)$ 
 and $C_{\ref{PropositionCentralEstimateInNormalDirection}; 1} = C_{\ref{PropositionCentralEstimateInNormalDirection}; 1}(\eta,r,T)$ 
 are the corresponding constants from Proposition~\ref{PropositionCentralEstimateInNormalDirection}.
 Observe that $M_\epsilon T_\delta \partial^{\alpha'} u$
 solves the initial value problem~\eqref{IVP} with differential operator $L_\delta$,
 inhomogeneity $L_\delta M_\epsilon T_\delta \partial^{\alpha'} u$ and initial value $M_\epsilon T_\delta u_0$ for each $\epsilon \in (0, \delta)$. Moreover, 
 $\|T_\delta A_i\|_{W^{1,\infty}(\Omega)} \leq r$ and $\|T_\delta A_i(0)\|_{L^\infty(\R^3_+)} \leq r$
 for all $\delta > 0$ and $i \in \{0, \ldots, 3\}$, and the same is true for $D$.
 Proposition~\ref{PropositionCentralEstimateInNormalDirection} thus shows
 \begin{align}
  \label{EquationEstimateForRegularityInNormalDirectionOne}
    &\Ltwohnt{\nabla(M_{\frac{1}{n}} T_\delta \partial^{\alpha'} u - M_{\frac{1}{k}} T_\delta \partial^{\alpha'} u)}^2  \\
    &\leq C' \Big( \sum_{j = 0}^2 \Ltwohnt{(M_{\frac{1}{n}} - M_{\frac{1}{k}}) \partial_j T_\delta \partial^{\alpha'} u}^2 
    +  \Gnorm{0}{L_\delta(M_{\frac{1}{n}} - M_{\frac{1}{k}}) \partial^{\alpha'} u}^2  \nonumber\\
    & \quad + \Ltwoan{\Div_\delta L_\delta(M_{\frac{1}{n}} - M_{\frac{1}{k}}) \partial^{\alpha'} u}^2 + \Hhn{1}{(M_{\frac{1}{n}} - M_{\frac{1}{k}}) T_\delta \partial^{\alpha'} u_0}^2 \Big),  \nonumber
 \end{align}
for all $n, k \in \N$ with $n, k \geq n_\delta$. Since $\partial_j T_\delta \partial^{\alpha'} u$ 
belongs to $C(\clJ, L^2(\R^2 \times (-\delta, \infty)))$ for $j \in \{0,1,2\}$, $\clJ$ is compact, and $T_\delta \partial^{\alpha'} u_0$ 
is contained in $H^1(\R^2 \times (-\delta,\infty))$, the sum and the last term on the right-hand side 
of~\eqref{EquationEstimateForRegularityInNormalDirectionOne} converge to zero as $n,k \rightarrow \infty$.
Exploiting that $L_\delta M_{\frac{1}{n}} T_\delta \partial^{\alpha'} u$ converges to $f_{\alpha'}$ 
and $\Div_\delta L_\delta M_{\frac{1}{n}} T_\delta \partial^{\alpha'} u$ to $T_\delta f_{\operatorname{div},\alpha'}$
in $\G{0}$ respectively $\Ltwoa$ as $n \rightarrow \infty$, we infer that $(\nabla M_{\frac{1}{n}} T_\delta \partial^{\alpha'} u)_{n \geq n_\delta}$
is a Cauchy sequence in $\G{0}$. 
Since $(M_{\frac{1}{n}} T_\delta \partial^{\alpha'} u)_{n \geq n_\delta}$ 
converges to $T_\delta \partial^{\alpha'} u$ in $\G{0}$, we conclude that $T_\delta \partial^{\alpha'} u$ belongs to $C(\clJ, \Hh{1})$ and that 
\begin{align}
 \label{EquationConvergenceOfGradientMEpsEdeltau}
 \Gnorm{0}{\nabla M_{\frac{1}{n}} T_\delta \partial^{\alpha'} u - \nabla T_\delta \partial^{\alpha'} u} \longrightarrow 0
\end{align}
as $n \rightarrow \infty$
for all $\delta > 0$.
Applying Proposition~\ref{PropositionCentralEstimateInNormalDirection} directly to $M_{\frac{1}{n}} T_\delta \partial^{\alpha'} u$ 
and letting $n \rightarrow \infty$, we obtain
\begin{align}
\label{EquationEstimateForRegularityInNormalDirectionOnlyTranslation}
 \Ltwohnt{\nabla T_\delta \partial^{\alpha'} u }^2 
    &\leq C' \Big( \sum_{j = 0}^2 \Ltwohnt{ \partial_j T_\delta \partial^{\alpha'} u}^2 
    +  \Ltwohnt{T_\delta f_{\alpha'} }^2 + \Ltwoan{ T_\delta f_{\div, \alpha'}}^2\nonumber \\
    & \qquad \qquad  + \Hhn{1}{T_\delta \partial^{\alpha'} u_0}^2 \Big),
\end{align}
for $\gamma \geq 1$, cf.~\eqref{EquationEstimateForRegularityInNormalDirectionOne}.

III)
We next show that $\partial^{\alpha'} u(t)$ is an element of $\Hh{1}$ for all $t \in \clJ$. Note that we only have to prove that
$\partial_3 \partial^{\alpha'} u(t)$ belongs to $\Ltwoh$ for this claim. We abbreviate $\R^2 \times (\delta, \infty)$ by $\R^3_\delta$ and 
denote the restriction operator to $\R^3_\delta$ by
$R_\delta$ for all $\delta > 0$. In the next step we show that $R_\delta u(t)$ belongs to $H^1(\R^3_\delta)$ 
for all $\delta > 0$.

Fix $\delta > 0$ and $t \in \clJ$. Let $\phi \in C_c^\infty(\R^3_\delta)$. We compute
\begin{align*}
 &\int_{\R^3_\delta} R_\delta \partial^{\alpha'} u(t,x) \partial_3 \phi(x) dx 
  = \int_{\R^3_+} T_\delta  \partial^{\alpha'} u(t,x) \partial_3 T_\delta \phi(x) dx \\
  &= - \int_{\R^3_+} \partial_3 T_\delta \partial^{\alpha'} u(t,x) T_\delta \phi(x) dx = - \int_{\R^3_\delta} T_{-\delta}  \partial_3 T_\delta \partial^{\alpha'} u(t,x) \phi(x) dx,
\end{align*}
using that $T_\delta \phi \in C_c^\infty(\R^3_+)$.
It follows
\begin{equation}
  \label{EquationDerivativeOfRestrictedSolution}
 \partial_3 R_\delta \partial^{\alpha'} u(t) = T_{-\delta} \partial_3 T_\delta \partial^{\alpha'} u(t) \in L^2(\R^2 \times (\delta, \infty))
\end{equation} 
as $\partial_3 T_\delta \partial^{\alpha'} u(t) \in \Ltwoh$ by step~II).

Next pick $\overline{\delta} > \delta$ and $\phi \in C_c^\infty(\R^3_{\overline{\delta}})$. We compute
\begin{align*}
 &\int_{\R^3_{\overline{\delta}}} R_{\overline{\delta}}  \partial_3 R_\delta \partial^{\alpha'} u(t,x) \phi(x) dx 
 = \int_{\R^3_\delta} \partial_3 R_\delta \partial^{\alpha'} u(t,x)  \phi(x) dx \\
 &= - \int_{\R^3_\delta} R_\delta \partial^{\alpha'} u(t,x)  \partial_3  \phi(x) dx = - \int_{\R^3_{\overline{\delta}}} R_{\overline{\delta}} \partial^{\alpha'} u(t,x)  \partial_3 \phi(x) dx \\
 &= \int_{\R^3_{\overline{\delta}}} \partial_3 R_{\overline{\delta}} \partial^{\alpha'} u(t,x) \phi(x) dx,
\end{align*}
where we exploited that $\supp(\phi) \Subset  \R^3_{\overline{\delta}}$. Since $\phi \in C_c^\infty(\R^3_{\overline{\delta}})$ 
was arbitrary, we conclude $\partial_3 R_{\overline{\delta}} \partial^{\alpha'} u(t) =   \partial_3 R_\delta \partial^{\alpha'} u(t)$ on $\R^3_{\overline{\delta}}$.
In particular, we can define the function $v(t) \in L^2_{\operatorname{loc}}(\R^3_+)$ by setting for all $\delta > 0$
\begin{align*}
 v(t,x) = \partial_3 R_\delta \partial^{\alpha'} u(t,x) \quad \text{for almost all } x \in \R^3_\delta.
\end{align*}

Take $\phi \in C_c^\infty(\R^3_+)$. Fix a number $\tau > 0$ with $\dist( \supp(\phi), \partial \R^3_+) > \tau$, i.e., 
$\supp(\phi) \Subset  \R_\tau^3$. We then deduce
\begin{align*}
 \int_{\R^3_+} \partial^{\alpha'} u(t,x) \partial_3 \phi(x) dx 
 &= \int_{\R^3_\tau} R_\tau \partial^{\alpha'} u(t,x) \partial_3 \phi(x) dx 
 = - \int_{\R^3_\tau} \! \partial_3 R_\tau \partial^{\alpha'} u(t,x) \phi(x) dx \\
 &= - \int_{\R^3_\tau} v(t,x) \phi(x) dx 
 = - \int_{\R^3_+} v(t,x) \phi(x) dx.
\end{align*}
This means that $\partial_3 \partial^{\alpha'} u(t) = v(t)\in L^2_{\operatorname{loc}}(\R^3_+)$.

We further note that $\partial_3 R_\delta \partial^{\alpha'} u(t)$ converges pointwise almost everywhere on $\R^3_+$ 
to $v(t) = \partial_3 \partial^{\alpha'} u(t)$ 
as $\delta \rightarrow 0$. 
Using~\eqref{EquationDerivativeOfRestrictedSolution}, we further infer
\begin{align}
\label{EquationNormOfRestrictionAndTranslation}
 \Ltwohn{\partial_3 R_\delta \partial^{\alpha'} u(t)}^2 &= \int_{\R^3_\delta} |T_{-\delta} \partial_3 T_\delta \partial^{\alpha'} u(t,x)|^2 dx
 = \int_{\R^3_+} |\partial_3 T_\delta \partial^{\alpha'} u(t,x)|^2 dx \nonumber \\
 & = \Ltwohn{ \partial_3 T_\delta \partial^{\alpha'} u(t)}^2.
\end{align}

Let $(\delta_n)_n$ be a null-sequence. Fatou's lemma, \eqref{EquationNormOfRestrictionAndTranslation}, 
and~\eqref{EquationEstimateForRegularityInNormalDirectionOnlyTranslation} then imply
\begin{align*}
 &\int_{\R^3_+} |\partial_3 \partial^{\alpha'} u(t,x)|^2 dx = \int_{\R^3_+} \liminf_{n \rightarrow \infty} |\partial_3 R_{\delta_n} \partial^{\alpha'} u(t,x)| dx \\
 &\leq \liminf_{n \rightarrow \infty} \int_{\R^3_+} |\partial_3 R_{\delta_n} \partial^{\alpha'} u(t,x)|^2 dx 
  = \liminf_{n \rightarrow \infty} \Ltwohn{\partial_3 T_{\delta_n} \partial^{\alpha'} u(t)}^2 \\
  &\leq e^{2 \gamma T} \liminf_{n \rightarrow \infty} \Gnorm{0}{\nabla T_{\delta_n} \partial^{\alpha'} u}^2 \\
  &\leq C'e^{2 \gamma T} \Big( \sum_{j = 0}^2 \Ltwohnt{ \partial_j \partial^{\alpha'} u}^2 +  \Ltwohnt{f_{\alpha'} }^2 + \Ltwoan{f_{\div, \alpha'}}^2 + \Hhn{1}{\partial^{\alpha'} u_0}^2 \Big) \\
  & =: K_u^2 < \infty,
\end{align*}
where we used that $\partial_j \partial^{\alpha'} u, \partial^{\alpha'} u \in C(\clJ, \Ltwoh)$ for $j \in \{0,1,2\}$, $f_{\alpha'} \in  \G{0}$, 
$f_{\div, \alpha'} \in \Ltwoa$, and $\partial^{\alpha'} u_0 \in \Hh{1}$.
We conclude that $\partial_3 \partial^{\alpha'} u(t)$ belongs to $\Ltwoh$ with $\Ltwohn{\partial_3 \partial^{\alpha'} u(t)} \leq K_u$ for all $t \in \clJ$.

We further point out that $R_\delta \partial_3 \partial^{\alpha'} u(t) = R_\delta v(t) = \partial_3 R_\delta \partial^{\alpha'} u(t)$. This fact 
implies that
\begin{align*}
 |\partial_3 R_\delta \partial^{\alpha'} u(t)| \leq |\partial_3 \partial^{\alpha'} u(t)|
\end{align*}
on $\R^3_+$. As $\partial_3 R_\delta \partial^{\alpha'} u(t)$ tends to $\partial_3 \partial^{\alpha'} u(t)$ 
pointwise almost everywhere on $\R^3_+$, we obtain that
\begin{align*}
 \partial_3 R_\delta \partial^{\alpha'}u(t) \longrightarrow \partial_3 \partial^{\alpha'} u(t)
\end{align*}
in $\Ltwoh$ as $\delta \rightarrow 0$.

Since $\partial_3 T_\delta \partial^{\alpha'} u$ belongs to $C(\clJ, \Ltwoh)$ for all $\delta > 0$, one can argue as 
in~\eqref{EquationNormOfRestrictionAndTranslation} to deduce that $\partial_3 R_\delta \partial^{\alpha'} u$ is also 
continuous on $\clJ$ with values in $\Ltwoh$ and thus strongly measurable.
Hence, $\partial_3 \partial^{\alpha'} u$ is the pointwise limit of strongly measurable functions and therefore itself strongly measurable 
on $J$ with values in $\Ltwoh$. As a result, $\partial_3 \partial^{\alpha'} u$ and thus $\nabla \partial^{\alpha'} u$ belong to 
$L^\infty(J, \Ltwoh)$. We then obtain via Proposition~\ref{PropositionCentralEstimateInNormalDirection} that $\partial^{\alpha'} u$ is contained 
in $C(\clJ, \Hh{1})$.
\end{proof}


\begin{cor}
 \label{CorollaryRegularityInNormalDirectionL2InTime}
Let $\eta > 0$, $m \in \N$, and $\tilde{m} =  \max\{m,3\}$. Take $A_0 \in \Fupdwl{\tilde{m}}{cp}{\eta}$, 
$A_1, A_2 \in \Fcoeff{\tilde{m}}{\operatorname{cp}}$, $A_3 = A_3^{\operatorname{co}}$,
and $D \in \Fuwl{\tilde{m}}{cp}$. Pick $f \in \Ha{m}$, 
and $u_0 \in \Hh{m}$. 
Let $u$ be a solution of the initial value problem~\eqref{IVP} with differential operator $L = L(A_0,\ldots, A_3, D)$, inhomogeneity 
$f$, and initial value $u_0$. Assume that $u$ belongs to $\bigcap_{j = 1}^m C^j(\clJ, \Hh{m-j})$.

Take $k \in \{1, \ldots, m\}$ and a multi-index $\alpha \in \N_0^4$ with $|\alpha| = m$, $\alpha_0 = 0$,
and $\alpha_3 = k$. 
Suppose that $\partial^\beta u$ is contained in $\Ltwoa$ for all $\beta \in \N_0^4$ with $|\beta| = m$ and 
$\beta_3 \leq k-1$.
Then $\partial^\alpha u$ is an element of $\Ltwoa$.
\end{cor}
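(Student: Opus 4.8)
The plan is to rerun the proof of Lemma~\ref{LemmaRegularityInNormalDirection} with every $\G{0}$-norm replaced by the corresponding $\Ltwoa$-norm, so that estimate~\eqref{EquationFirstOrderL2} of Proposition~\ref{PropositionCentralEstimateInNormalDirection} --- which, unlike~\eqref{EquationFirstOrderFinal}, requires its data only in $\Ltwoa$ --- takes over the central role. I would set $\alpha' = \alpha - e_3 \in \N_0^4$, so that $|\alpha'| = m-1$, $\alpha'_0 = 0$, $\alpha'_3 = k-1$; note $\partial^{\alpha'} u \in C(\clJ, \Ltwoh) \subseteq \G{0}$ (from $u \in C^1(\clJ, \Hh{m-1})$) and $\partial^{\alpha'} u_0 \in \Hh{1}$ (from $u_0 \in \Hh{m}$). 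Fix $\gamma \geq 1$ and, for $0 < \epsilon < \delta$, regularize by $M_\epsilon T_\delta \partial^{\alpha'} u \in C^1(\clJ, \Hh{1})$, which solves~\eqref{IVP} with differential operator $L_\delta := L(T_\delta A_0, \dots, T_\delta A_3, T_\delta D)$, inhomogeneity $L_\delta M_\epsilon T_\delta \partial^{\alpha'} u \in \Ltwoa$, initial value $M_\epsilon T_\delta \partial^{\alpha'} u_0 \in \Hh{1}$, and with $\Div_\delta L_\delta M_\epsilon T_\delta \partial^{\alpha'} u \in \Ltwoa$, where $\Div_\delta := \Div(T_\delta A_1, T_\delta A_2, T_\delta A_3)$.

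The convergence statements from the proof of Lemma~\ref{LemmaRegularityInNormalDirection} carry over verbatim, and in fact only their (weaker) $\Ltwoa$-versions are needed here: by the cancellation identities~\eqref{EquationTraceSmaller3EqualsZero} and~\eqref{EquationTraceLarger3EqualsZero} together with the commutator estimates for a $W^{1,\infty}$-function and a mollifier, one has $L_\delta M_\epsilon T_\delta \partial^{\alpha'} u \to T_\delta f_{\alpha'}$ and $\Div_\delta L_\delta M_\epsilon T_\delta \partial^{\alpha'} u \to T_\delta f_{\div,\alpha'}$ in $\Ltwoa$ as $\epsilon \to 0$, with $f_{\alpha'}$ and $f_{\div,\alpha'}$ as in that proof --- for this only $u, \partial_t u \in C(\clJ, \Hh{m-1})$ enters, so the compactness argument used there to get uniformity in $t$ can be dropped. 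One also has $\partial_j \partial^{\alpha'} u \in \Ltwoa$ for $j \in \{0,1,2\}$: for $j = 0$ from $\partial_t u \in C(\clJ, \Hh{m-1})$, and for $j \in \{1,2\}$ from the hypothesis applied to $\beta = \alpha' + e_j$ (which satisfies $|\beta| = m$, $\beta_3 = k-1$).

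Next I would fix $\delta > 0$ and $n_\delta \in \N$ with $n_\delta^{-1} < \delta$ and apply~\eqref{EquationFirstOrderL2} with operator $L_\delta$ to the differences $M_{1/n} T_\delta \partial^{\alpha'} u - M_{1/k} T_\delta \partial^{\alpha'} u$ for $n,k \geq n_\delta$. All four terms on the right-hand side tend to $0$ as $n,k \to \infty$: the mollifier differences of $\partial_j T_\delta \partial^{\alpha'} u$ and of $T_\delta \partial^{\alpha'} u_0$ in $\Ltwoa$ resp.\ $\Hh{1}$, and the $L_\delta$- and $\Div_\delta L_\delta$-terms by the previous paragraph. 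Hence $(\nabla M_{1/n} T_\delta \partial^{\alpha'} u)_{n \geq n_\delta}$ is Cauchy in $\Ltwoa$; since it converges distributionally to $\nabla T_\delta \partial^{\alpha'} u$ and the tangential components $\partial_j T_\delta \partial^{\alpha'} u$ ($j \in \{1,2\}$) already lie in $\Ltwoa$, this yields $\partial_3 T_\delta \partial^{\alpha'} u \in \Ltwoa$, and letting $n \to \infty$ in~\eqref{EquationFirstOrderL2} applied directly to $M_{1/n} T_\delta \partial^{\alpha'} u$ gives
\begin{align*}
 \Ltwoan{\nabla T_\delta \partial^{\alpha'} u}^2 \leq C' \Big( \sum_{j=0}^2 \Ltwoan{\partial_j T_\delta \partial^{\alpha'} u}^2 + \Ltwoan{T_\delta f_{\alpha'}}^2 + \Ltwoan{T_\delta f_{\div,\alpha'}}^2 + \Hhn{1}{T_\delta \partial^{\alpha'} u_0}^2 \Big),
\end{align*}
with $C'$ as in~\eqref{EquationIntroductionOfCPrime}, independent of $\delta$.

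Finally I would let $\delta \to 0$ exactly as in Step~III of the proof of Lemma~\ref{LemmaRegularityInNormalDirection}, replacing the pointwise-in-$t$ arguments there by their integrated-in-$t$ analogues. Writing $R_\delta$ for the restriction to $\R^2 \times (\delta,\infty)$, one has $\partial_3 R_\delta \partial^{\alpha'} u = T_{-\delta} \partial_3 T_\delta \partial^{\alpha'} u$; these weak derivatives are consistent across $\delta$ and thus define $v \in L^2_{\operatorname{loc}}(\Omega)$ with $v = \partial_3 \partial^{\alpha'} u$ distributionally, and $\partial_3 R_\delta \partial^{\alpha'} u \to \partial_3 \partial^{\alpha'} u$ pointwise a.e.\ on $\Omega$ as $\delta \to 0$. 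Since $\Ltwoan{\partial_3 R_\delta \partial^{\alpha'} u} = \Ltwoan{\partial_3 T_\delta \partial^{\alpha'} u}$ and the right-hand side of the displayed estimate converges as $\delta \to 0$ to its untranslated counterpart (strong continuity of the translations $T_\delta$ on $\Ltwoa$ and $\Hh{1}$), Fatou's lemma gives $\partial^\alpha u = \partial_3 \partial^{\alpha'} u \in \Ltwoa$. I expect the delicate point to be, as in Lemma~\ref{LemmaRegularityInNormalDirection}, avoiding a loss of regularity across $\partial\R^3_+$ --- handled by the translations $T_\delta$ exactly as there --- while everything is a shade easier here because only $L^2$- rather than $C$-regularity in time is being claimed.
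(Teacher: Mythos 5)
Your proposal is correct and takes essentially the same approach as the paper: substitute estimate~\eqref{EquationFirstOrderL2} for~\eqref{EquationFirstOrderFinal} in Step~II of the proof of Lemma~\ref{LemmaRegularityInNormalDirection}, and integrate in time in Step~III, using translation, mollification, the $L^2$-Cauchy argument, and Fatou to pass $\delta \to 0$. You also correctly observe the simplification that the compactness argument for uniform-in-$t$ convergence can be dropped, which is indeed what makes this variant lighter than the lemma itself.
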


\begin{proof}
 We only have to make small adaptions to the proof of Lemma~\ref{LemmaRegularityInNormalDirection}. In step II) 
 of that proof we replace the a priori estimate~\eqref{EquationFirstOrderFinal} from Proposition~\ref{PropositionCentralEstimateInNormalDirection} 
 by estimate~\eqref{EquationFirstOrderL2}. The arguments from step II) then yield that $T_\delta \partial^{\alpha'} u$
 is an element of $L^2(J, \Hh{1})$. Integrating over the time-space domain in step III) of the proof of Lemma~\ref{LemmaRegularityInNormalDirection}, 
 we derive that $\partial^{\alpha'} u$ belongs to $L^2(J, \Hh{1})$.
\end{proof}


For the regularization in spatial tangential variables, we first introduce the family of  
norms
\begin{align}
 \Hhtan{s}{v}^2 &= \int_{\R_+} \int_{\R^2} (1 + |\xi|^2)^{s}  |(\mathcal{F}_2 v)(\xi,x_3)|^2 d\xi dx_3, \nonumber \\
 \label{EquationWeightedTangentialNorms}
 \Hhtanw{s}{v}{\delta}^2 &= \int_{\R_+} \int_{\R^2} (1 + |\xi|^2)^{s+1} (1 + |\delta \xi|^2)^{-1} |(\mathcal{F}_2 v)(\xi,x_3)|^2 d\xi dx_3
\end{align}
for all $v \in \mathcal{S}'(\overline{\R^3_+})$ with $\mathcal{F}_2 v \in L^2_{\operatorname{loc}}(\R^3_+)$,  $s \in \R$ and $\delta > 0$, where $\mathcal{F}_2$ denotes the Fourier transform in $x_1$- and $x_2$-direction 
and $\mathcal{S}'(\overline{\R^3_+})$ the space of tempered distributions on $\overline{\R^3_+}$,
see Section~1.7 and Section~2.4 in~\cite{Hoermander}. The space $\Hhta{s}$ consists of those  $v$ 
for which $\Hhtan{s}{v}$ is finite. As in the unweighted case we have of course the identity
\begin{equation}
 \label{EquationDerivativeInWeightedNorm}
 \Hhtanw{s+1}{v}{\delta}^2 = \Hhtanw{s}{v}{\delta}^2 + \sum_{j = 1}^2 \Hhtanw{s}{\partial_j v}{\delta}^2
\end{equation}
for all $s \in \R$ and $\delta > 0$. We further note that the definition directly implies
\begin{equation*}
 \Hhtanw{s}{v}{\delta} \leq \Hhtan{s+1}{v}
\end{equation*}
for all $v \in \Hhta{s+1}$, $s \in \R$, and $\delta > 0$.

We further take a function $\chi \in C_c^\infty(\R^2)$ such that $\mathcal{F}_2 \chi(\xi) = O(|\xi|^{m+1})$ as $\xi \rightarrow 0$ 
and $\mathcal{F}_2 \chi(t \xi) = 0$ for all $t \in \R$ implies $\xi = 0$, cf.~\cite{Hoermander}.
As usual we set $\chi_\epsilon(x) = \epsilon^{-2} \chi(x / \epsilon)$ 
for all $x \in \R^2$ and $\epsilon > 0$ and denote the convolution in spatial tangential variables with $\chi_{\epsilon}$ 
by $J_\epsilon$, i.e., 
\begin{equation*}
 J_\epsilon v (x) = \chi_\epsilon *_{\operatorname{ta}} v(x)= \int_{\R^2} \chi(y) v(x_1 - y_1, x_2 - y_2, x_3) dy
\end{equation*}
for all $v \in \Ltwoh$.

One of the advantages to work with the weighted norms from~\eqref{EquationWeightedTangentialNorms}
is that one can reduce the task of showing that 
a function $v$ from $\Hhta{s}$ belongs to $\Hhta{s+1}$ to finding a uniform bound in $\delta > 0$ for the $H^{s}_{\operatorname{ta},\delta}(\R^3_+)$-norms.
The following properties of this family of weighted norms are a consequence of~(2.4.4), Theorem~2.4.1, Theorem~2.4.2, 
Theorem~2.4.5, and Theorem~2.4.6 in~\cite{Hoermander}.
\begin{lem}
 \label{LemmaPropertiesOfWeightedNorms}
 Let $s \in [0,m+1)$, $v \in \Hhta{s-1}$, and let $A \in C^{\infty}(\R^3_+)$ be constant outside of a 
 compact subset of $\overline{\R^3_+}$.
 \begin{enumerate}
 \item \label{ItemBoundedUniformInDeltaImpliesRegPlusOne} Assume that there is a constant $C$, independent of $\delta$, 
 such that
 \begin{equation*}
  \Hhtanw{s-1}{v}{\delta} \leq C
 \end{equation*}
 for all $\delta > 0$ in a neighborhood of $0$. Then $v$ belongs to $\Hhta{s}$.
 
 \item \label{ItemEstimateMollifierFromBelow} There exist constants $c$ and $C$, independent of $\delta$ and $v$, 
 such that
 \begin{align*}
  c \Hhtanw{s-1}{v}{\delta}^2 &\leq \Hhtan{s-1}{v}^2 + \int_0^1 \Ltwohn{J_\epsilon v}^2 \epsilon^{-2s-1} \Big(1 + \frac{\delta^2}{\epsilon^2} \Big)^{-1} d\epsilon \\ 
  &\leq C \Hhtanw{s-1}{v}{\delta}^2
 \end{align*}
 for all $\delta \in (0,1)$.
 
 \item \label{ItemCommutatorEstimateForWeightedNorm} There is a constant $C$, independent of $\delta$ and $v$, such that
 \begin{equation*}
  \int_0^1 \Ltwohn{A J_\epsilon v - J_\epsilon(A v)}^2 \epsilon^{-2s-1} \Big(1 + \frac{\delta^2}{\epsilon^2} \Big)^{-1} d\epsilon 
  \leq C \Hhtanw{s-2}{v}{\delta}^2
 \end{equation*}
 for all $\delta \in (0,1)$.
 \end{enumerate}
\end{lem}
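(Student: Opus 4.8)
The plan is to deduce all three statements from the corresponding results of~\cite[Section~2.4]{Hoermander}, working in the tangential frequency variable $\xi\in\R^2$ dual to $(x_1,x_2)$ and treating $x_3\in\R_+$ as a parameter. The point is that $J_\epsilon$, the tangential derivatives $\partial_1,\partial_2$, and the norms~\eqref{EquationWeightedTangentialNorms} all act only in the first two variables, so no trace at $\partial\R^3_+$ enters and Plancherel in $(x_1,x_2)$ — together with $\mathcal{F}_2(J_\epsilon v)(\xi,x_3)=\mathcal{F}_2\chi(\epsilon\xi)\,\mathcal{F}_2 v(\xi,x_3)$ — reduces each assertion to a pointwise-in-$\xi$ inequality that, after integration over $x_3\in\R_+$, yields the claim. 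In particular
\begin{equation*}
 \Ltwohn{J_\epsilon v}^2 = \int_{\R_+}\!\int_{\R^2}|\mathcal{F}_2\chi(\epsilon\xi)|^2\,|\mathcal{F}_2 v(\xi,x_3)|^2\,d\xi\,dx_3 .
\end{equation*}

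For~\ref{ItemBoundedUniformInDeltaImpliesRegPlusOne} I would argue by monotone convergence: $(1+|\delta\xi|^2)^{-1}\uparrow 1$ as $\delta\downarrow 0$, hence $\Hhtanw{s-1}{v}{\delta}^2\uparrow\Hhtan{s}{v}^2\in[0,\infty]$, and the assumed uniform bound forces $\Hhtan{s}{v}^2<\infty$, i.e. $v\in\Hhta{s}$; this is Theorem~2.4.1 of~\cite{Hoermander}. For~\ref{ItemEstimateMollifierFromBelow} I would apply Fubini to the $\epsilon$-integral and the displayed Plancherel identity to rewrite the middle term as $\int_{\R_+}\int_{\R^2}w_\delta(\xi)\,|\mathcal{F}_2 v(\xi,x_3)|^2\,d\xi\,dx_3$ with
\begin{equation*}
 w_\delta(\xi) = (1+|\xi|^2)^{s-1} + \int_0^1|\mathcal{F}_2\chi(\epsilon\xi)|^2\,\epsilon^{-2s-1}\Big(1+\tfrac{\delta^2}{\epsilon^2}\Big)^{-1}d\epsilon ,
\end{equation*}
and then verify the pointwise equivalence $w_\delta(\xi)\asymp(1+|\xi|^2)^s(1+|\delta\xi|^2)^{-1}$ uniformly in $\xi\in\R^2$ and $\delta\in(0,1)$. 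For $|\xi|\le 1$ the first summand already gives both bounds (here it is also needed for the lower bound, since the $\epsilon$-integral alone is only $O(|\xi|^{2m+2})$ near $\xi=0$); for $|\xi|\ge 1$ one substitutes $\epsilon\mapsto\epsilon/|\xi|$ and uses the defining properties of $\chi$ — the vanishing $\mathcal{F}_2\chi(\eta)=O(|\eta|^{m+1})$ near $0$, which since $s<m+1$ makes the $\epsilon$-integral converge at $0$; the rapid decay of $\mathcal{F}_2\chi$, which controls $\epsilon\to\infty$ and the regime $\epsilon\gg\delta$; and $\mathcal{F}_2\chi(t\xi)\not\equiv 0$ for $\xi\ne 0$, which provides the lower bound. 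This is the content of Theorem~2.4.2 of~\cite{Hoermander} for the mollifier fixed in Section~2.4 there.

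For~\ref{ItemCommutatorEstimateForWeightedNorm} I would write $A J_\epsilon v-J_\epsilon(A v)=[A,J_\epsilon]v$, split off the (commuting, hence harmless) constant part of $A$, and apply to the compactly supported remainder the commutator estimate for a Friedrichs mollifier and a smooth coefficient, Theorems~2.4.5 and~2.4.6 of~\cite{Hoermander}; this gains one order of $\epsilon$ over the trivial bound, so after the same Fubini manipulation and integration over $x_3$ the integral is controlled by $\Hhtanw{s-2}{v}{\delta}^2$, consistent with the one-derivative gap in~\eqref{EquationDerivativeInWeightedNorm}. The only genuinely technical point will be justifying the reduction to the full-space, $\R^2$-frequency setting of~\cite{Hoermander}: one must check that $x_3$ truly enters only as a parameter — which holds because $\mathcal{F}_2 v(\cdot,x_3)\in L^2(\R^2)$ for a.e. $x_3$ by $v\in\Hhta{s-1}$, all operators are tangential, and each estimate is pointwise in $x_3$ and therefore stable under integration — and that the integrability at $\epsilon=0$ is uniform over the entire range $s\in[0,m+1)$, which is exactly why $\chi$ was required to have $\mathcal{F}_2\chi$ vanishing to order $m+1$ at the origin.
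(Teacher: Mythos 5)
Your proof takes the same route as the paper, which simply cites (2.4.4) together with Theorems~2.4.1, 2.4.2, 2.4.5, and 2.4.6 of H\"ormander and remarks that the commutator bound extends from Schwartz coefficients to smooth ones constant outside a compact set --- precisely the extension you handle by splitting off the constant part. Your reconstruction (Plancherel in the tangential variables with $x_3$ as a parameter; the $|\xi|\lessgtr 1$ analysis of $w_\delta$ using the $O(|\xi|^{m+1})$ vanishing, rapid decay, and ray-nondegeneracy of $\mathcal{F}_2\chi$; and the reduction of the commutator to the compactly supported part of $A$) is correct, and the only slip is that your attributions for (2) and (3) are swapped relative to H\"ormander: the mollifier/weighted-norm equivalence comes from Theorems~2.4.5 and~2.4.1, whereas the commutator estimate is Theorem~2.4.2.
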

 We note that H{\"o}rmander states the commutator estimate only for coefficients from the 
 Schwartz space.
The proof of Theorem~2.4.2 in~\cite{Hoermander} however also works for smooth coefficients which are 
constant outside of a compact set. 

Employing the family of weighted norms from~\eqref{EquationWeightedTangentialNorms} and 
Lemma~\ref{LemmaPropertiesOfWeightedNorms}, we can now show how regularity in time implies 
regularity in tangential directions. Since we want to apply Lemma~\ref{LemmaPropertiesOfWeightedNorms}, 
we have to assume that the coefficients belong to $C^\infty(\overline{\Omega})$. We will 
return to coefficients in $\F{m}$ with an approximation argument 
below.
\begin{lem}
 \label{LemmaRegularityInSpaceTangential}
 Let $\eta > 0$, $m \in \N$ and $\tilde{m} = \max\{m,3\}$.
 Take coefficients $A_0 \in \Fupdwl{\tilde{m}}{cp}{\eta}$, $A_1, A_2 \in \Fcoeff{\tilde{m}}{cp}$, 
 $A_3 = A_3^{\operatorname{co}}$,
 $D \in \Fuwl{\tilde{m}}{cp}$ and $B = B^{\operatorname{co}}$. We further assume that these 
 coefficients belong 
 to $C^\infty(\overline{\Omega})$.
 Let $u$ be the weak solution
 of~\eqref{IBVP} with differential operator $L = L(A_0,\ldots, A_3, D)$, inhomogeneity
 $f \in \Hata{m}$, boundary value $g \in \E{m}$,  and initial value $u_0 \in \Hhta{m}$. 
 Suppose that $u$ belongs to $\bigcap_{j = 1}^m C^j(\clJ, \Hh{m-j})$. Pick a multi-index $\alpha \in \N_0^4$ with $|\alpha| = m$ 
 and $\alpha_0 = \alpha_3 = 0$. Then $\partial^\alpha u$ is an element of $C(\clJ, \Ltwoh)$.
\end{lem}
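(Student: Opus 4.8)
The plan is to gain one order of tangential Sobolev regularity for $u$; I will in fact show $u \in C(\clJ, \Hhta{m})$, which immediately gives $\partial^\alpha u \in C(\clJ,\Ltwoh)$ for every purely spatial tangential multi-index $\alpha$ of order $m$. The tool is the basic $L^2$-estimate of Lemma~\ref{LemmaEllerResult}, applied to the tangential mollifications $J_\epsilon u$, together with the weighted tangential norms~\eqref{EquationWeightedTangentialNorms}. The first step is to observe that, since $A_3 = A_3^{\operatorname{co}}$ and $B = B^{\operatorname{co}}$ are constant matrices, they commute with $J_\epsilon$; hence no commutator involving $A_3$ appears and, by uniqueness in Lemma~\ref{LemmaEllerResult}, $J_\epsilon u$ is the weak solution of~\eqref{IBVP} with initial value $J_\epsilon u_0 \in \Hhta{m}$, boundary value $J_\epsilon g \in L^2(J, H^{1/2}(\partial \R^3_+))$, and inhomogeneity
\[
 f_\epsilon = J_\epsilon f + [A_0, J_\epsilon] \partial_t u + \sum_{j=1}^2 [A_j, J_\epsilon] \partial_j u + [D, J_\epsilon] u \in \Ltwoa,
\]
where one uses that $u, \partial_t u \in C(\clJ, \Hh{m-1})$ by the hypothesis.

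Next I would fix $\gamma \geq \gamma_0(\eta,r)$ from Lemma~\ref{LemmaEllerResult} and $\delta \in (0,1)$, apply estimate~\eqref{EquationEllerEstimateInL2} to $J_\epsilon u$, multiply by $\epsilon^{-2m-1}(1 + \delta^2 \epsilon^{-2})^{-1}$, and integrate over $\epsilon \in (0,1)$. On the left, Lemma~\ref{LemmaPropertiesOfWeightedNorms}~\ref{ItemEstimateMollifierFromBelow} (used for each $t$ on $e^{-\gamma t}u(t)$) bounds the resulting integral from below by $c\,\Hhtanw{m-1}{e^{-\gamma t}u(t)}{\delta}^2 - \Hhtan{m-1}{u(t)}^2$. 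On the right, the data terms are uniformly bounded in $\delta$: $\int_0^1 \Ltwohn{J_\epsilon u_0}^2 \epsilon^{-2m-1}(1+\delta^2\epsilon^{-2})^{-1}\,d\epsilon \leq C\, \Hhtan{m}{u_0}^2$ again by Lemma~\ref{LemmaPropertiesOfWeightedNorms}~\ref{ItemEstimateMollifierFromBelow}, the boundary contribution is $\leq C\,\Enorm{m}{g}^2$ by the corresponding estimate on $\partial \R^3_+ = \R^2$, and $\int_0^1 \Ltwoan{J_\epsilon f}^2 \epsilon^{-2m-1}(1+\delta^2\epsilon^{-2})^{-1}\,d\epsilon \leq C\,\Hatan{m}{f}^2$. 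For the commutators in $f_\epsilon$ I would invoke Lemma~\ref{LemmaPropertiesOfWeightedNorms}~\ref{ItemCommutatorEstimateForWeightedNorm}: the terms $[A_0, J_\epsilon]\partial_t u$ and $[D, J_\epsilon]u$ lead, after integration in $t$, to a bound by $C\,\|u\|_{C^1(\clJ,\Hh{m-1})}^2$, which is finite and $\delta$-independent; whereas for $j \in \{1,2\}$,
\[
 \int_0^1 \Ltwohn{[A_j, J_\epsilon]\partial_j u(t)}^2 \epsilon^{-2m-1}(1 + \delta^2 \epsilon^{-2})^{-1}\,d\epsilon \leq C\, \Hhtanw{m-2}{\partial_j u(t)}{\delta}^2 \leq C\, \Hhtanw{m-1}{u(t)}{\delta}^2,
\]
the last inequality because $|\xi_j| \leq |\xi|$ in the definition~\eqref{EquationWeightedTangentialNorms}.

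Writing $W(\delta) := \sup_{t \in \clJ}\Hhtanw{m-1}{e^{-\gamma t}u(t)}{\delta}^2$ and collecting these bounds, and noting that all commutators enter with the factor $C_0/\gamma$ from~\eqref{EquationEllerEstimateInL2}, one arrives at $c\,W(\delta) \leq K + (C/\gamma)\,W(\delta)$ with $K$ independent of $\delta$ and $C$ independent of $\delta$ and $\gamma$. Enlarging $\gamma$ so that $C/\gamma \leq c/2$ then shows $W(\delta)$, hence $\Hhtanw{m-1}{u(t)}{\delta}$ for each $t$, bounded uniformly in $\delta \in (0,1)$, and Lemma~\ref{LemmaPropertiesOfWeightedNorms}~\ref{ItemBoundedUniformInDeltaImpliesRegPlusOne} gives $u(t) \in \Hhta{m}$ for all $t \in \clJ$ with $\sup_t \Hhtan{m}{u(t)} < \infty$; in particular $\partial^\alpha u \in L^\infty(J, \Ltwoh)$. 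To pass from $L^\infty$ to continuity in time I would combine the weak continuity $u \in C_w(\clJ, \Hhta{m})$ (a consequence of $u \in C(\clJ, \Hh{m-1}) \cap L^\infty(J, \Hhta{m})$) with the continuity of $t \mapsto \Hhtan{m}{u(t)}$, which follows from an energy identity for the mollified solutions $J_\epsilon u$ that is uniform in $\epsilon$, exactly as in~\cite{Hoermander} and in the concluding part of the proof of Lemma~\ref{LemmaRegularityInNormalDirection}; this yields $u \in C(\clJ, \Hhta{m})$ and $\partial^\alpha u \in C(\clJ, \Ltwoh)$.

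The main obstacle will be the spatial tangential commutators $[A_j, J_\epsilon]\partial_j u$, $j \in \{1,2\}$: they carry one more tangential derivative of $u$ than the hypotheses provide, so they cannot be estimated by known quantities outright. The decisive point is that the weighted commutator estimate bounds them by $\Hhtanw{m-1}{u(t)}{\delta}^2$ itself (via $|\xi_j| \leq |\xi|$), so that, thanks to the $1/\gamma$-prefactor in~\eqref{EquationEllerEstimateInL2}, they can be absorbed into the left-hand side once $\gamma$ is chosen large. This is precisely why the reduction of Section~\ref{SectionLocalizationAndFunctionSpaces} to a half-space problem with $A_3 = A_3^{\operatorname{co}}$ matters: otherwise a commutator $[A_3, J_\epsilon]$ would appear, which cannot be controlled in this way.
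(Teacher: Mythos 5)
Your first step is essentially the paper's Step~I and is carried out correctly: you apply Lemma~\ref{LemmaEllerResult} to $J_\epsilon u$, observe that $A_3$ and $B$ commute with $J_\epsilon$, multiply the resulting $L^2$-estimate by $\epsilon^{-2m-1}(1+\delta^2\epsilon^{-2})^{-1}$, integrate over $\epsilon\in(0,1)$, and use Lemma~\ref{LemmaPropertiesOfWeightedNorms} on both sides. Your key observations — that the $j\in\{1,2\}$ commutators $[A_j,J_\epsilon]\partial_j u$ are the dangerous ones (controlled by $\Hhtanw{m-1}{u(t)}{\delta}^2$ via the weighted commutator estimate, then absorbed into the left with the $1/\gamma$-prefactor once $\gamma$ is large), that the time commutator $[A_0,J_\epsilon]\partial_t u$ is harmless because it is controlled by $\Hangamma{m-1}{\partial_t u}$, and that the $A_3$-commutator vanishes — are exactly those in the paper, and the conclusion $u\in L^\infty(J,\Hhta{m})$ is correct.

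The second step has a genuine gap. After establishing $\sup_{t}\Hhtan{m}{u(t)}<\infty$, you assert that strong continuity follows from $u\in C_w(\clJ,\Hhta{m})$ together with continuity of $t\mapsto\Hhtan{m}{u(t)}$, and you justify the latter by an ``energy identity for the mollified solutions $J_\epsilon u$ that is uniform in $\epsilon$, exactly as in \cite{Hoermander} and in the concluding part of the proof of Lemma~\ref{LemmaRegularityInNormalDirection}.'' This is not substantiated. First, the concluding step of Lemma~\ref{LemmaRegularityInNormalDirection} does not use an energy identity: it establishes strong measurability of $\partial_3\partial^{\alpha'}u$ via translated/restricted problems and then invokes Proposition~\ref{PropositionCentralEstimateInNormalDirection}, so the analogy is misapplied. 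Second, to produce a uniform-in-$\epsilon$ energy identity for $J_\epsilon\partial^\alpha u$ (or to show directly that $\Hhtan{m}{u(t)}$ is continuous), you have to control the inhomogeneity $L J_\epsilon\partial^\alpha u = J_\epsilon f_\alpha + \sum_{j=0}^{2}[A_j,J_\epsilon]\partial_j\partial^\alpha u + [D,J_\epsilon]\partial^\alpha u$ and its convergence as $\epsilon\to 0$, together with the boundary term — exactly the ingredients of the Cauchy argument, which you never actually set up. The paper instead proceeds concretely: it first applies Corollary~\ref{CorollaryRegularityInNormalDirectionL2InTime} inductively (using the Step-I tangential regularity and the hypothesis on time derivatives) to obtain $u\in\Ha{m}$, so that $f_\alpha\in\Ltwoa$ and $\partial^\alpha u\in\Ltwoa$; then it applies Lemma~\ref{LemmaEllerResult} to the differences $J_{1/n}\partial^\alpha u - J_{1/k}\partial^\alpha u$ and shows that all three data terms (initial value, boundary value, and inhomogeneity, the latter via Friedrichs-type commutator convergence) go to zero, giving the Cauchy property in $\G{0}$ and hence $\partial^\alpha u\in C(\clJ,\Ltwoh)$. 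You should either carry out this Cauchy argument explicitly or else give a complete proof of the continuity of $t\mapsto\Hhtan{m}{u(t)}$, neither of which is done in your sketch; also note that you never address the normal-direction regularity that the paper needs (and uses via Corollary~\ref{CorollaryRegularityInNormalDirectionL2InTime}) before running the Cauchy step.
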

\begin{proof}
 I) We will establish the assertion in two steps.
 First we will show that $u$  
 is an element of $L^\infty(J, \Hhta{m})$. To that purpose we will apply
 Lemma~\ref{LemmaPropertiesOfWeightedNorms} and the a priori 
 estimates from Lemma~\ref{LemmaEllerResult}.
 
  Fix a parameter $\delta \in (0,1)$. Let $\gamma > 0$. The generic constants appearing in the following will all be independent 
  of $\delta$ and $\gamma$. We further note that Lemma~\ref{LemmaPropertiesOfWeightedNorms} will be used in almost every step 
  in the following so that we will not cite it every time. Applying the differential 
  operator $L$ to $J_\epsilon u$, 
  we obtain
  \begin{equation}
  \label{EquationCommutingDiffOpAndMollifierWithoutNormalDerivatives}
   L J_\epsilon  u = J_\epsilon f + \sum_{j = 0}^2 [A_j, J_\epsilon] \partial_j u + [D, J_\epsilon] u
  \end{equation}
  for all $\epsilon \in (0,1)$ since $A_3 = A_3^{\operatorname{co}}$. Lemma~\ref{LemmaPropertiesOfWeightedNorms} 
  allows us to estimate
  \begin{align}
  \label{EquationEstimatingTheCommutatorErrorheps}
   \int_J e^{-2\gamma t} &\int_0^1  \Ltwohn{[A_j, J_\epsilon] \partial_j  u(t)}^2 \epsilon^{-2m-1} \Big(1 + \frac{\delta^2}{\epsilon^2} \Big)^{-1} d\epsilon dt \\
   &\leq C \Hhtanwt{m-1}{u}{\delta}^2 + C \Hhtanwt{m-2}{ \partial_t u}{\delta}^2  \nonumber \\
   &\leq C \Hhtanwt{m-1}{u}{\delta}^2 + C \Hangamma{m-1}{\partial_t u}^2 \nonumber
  \end{align}
  for all $j \in \{0,1,2\}$. The commutator $[D, J_\epsilon] u$ is treated analogously. 
  In particular, $L J_\epsilon  u$ is an element of $\Ltwoa$. 
  Identity~\eqref{EquationCommutingDiffOpAndMollifierWithoutNormalDerivatives} 
  further implies that $A_3 \partial_3 J_\epsilon u$ belongs to $\Ltwoa$ so that $A_3 J_\epsilon u$ is an 
  element of $L^2(J, \Hh{1})$. We infer that the trace of $B J_\epsilon  u$ is contained in $L^2(J, H^{1/2}(\partial \R^3_+))$.
  Finally, $J_{\epsilon} u_0$ is an element of $\Ltwoh$ so that we can apply the a priori estimate from 
  Lemma~\ref{LemmaEllerResult} to the function $J_\epsilon u$. Before doing so, we use Lemma~\ref{LemmaPropertiesOfWeightedNorms} to 
  derive
  \begin{align}
  \label{EquationEstimateWeightedNormalphaprimeuAgainstConvolution}
   &\sup_{t \in J} e^{-2 \gamma t} \Hhtanw{m-1}{ u(t)}{\delta}^2 \leq C \sup_{t \in J} e^{-2 \gamma t} \Hhn{m-1}{u(t)}^2 \\
   &\hspace{11em}+ C \sup_{t \in J} e^{-2\gamma t} \int_0^1 \Ltwohn{J_\epsilon  u(t)}^2 \epsilon^{-2m-1} \Big(1 + \frac{\delta^2}{\epsilon^2} \Big)^{-1} d\epsilon  \nonumber\\
   &\leq C \Gnorm{m-1}{u}^2  + C  \int_{0}^1 \Gnorm{0}{J_\epsilon u}^2 \epsilon^{-2m-1} \Big(1 + \frac{\delta^2}{\epsilon^2} \Big)^{-1} d\epsilon \nonumber
  \end{align}
  for all $\gamma > 0$. The a priori estimates from Lemma~\ref{LemmaEllerResult} now show that there is a constant 
  $C_0$ and a number $\gamma_0 > 0$ such that
  \begin{align}
   \label{EquationApplyingAPrioriEstimatesToJepsPartialalphaPrime}
   \Gnorm{0}{J_\epsilon  u}^2 &\leq C_0 \Ltwohn{J_\epsilon u_0}^2 + C_0 \|B J_\epsilon u\|_{L^2_\gamma(J, H^{1/2}(\partial \R^3_+))}^2 + \frac{C_0}{\gamma} \Ltwoan{L J_\epsilon u}^2
  \end{align}
  for all $\gamma \geq \gamma_0$. Fix such a parameter $\gamma$ in the following. We next treat the terms appearing in~\eqref{EquationApplyingAPrioriEstimatesToJepsPartialalphaPrime}.
  Applying identity~\eqref{EquationCommutingDiffOpAndMollifierWithoutNormalDerivatives}, Fubini's theorem, and estimate~\eqref{EquationEstimatingTheCommutatorErrorheps}, we infer
  \begin{align}
  	\label{EquationEstimatingLJepsuFirstStep}
  	 &\int_0^1 \Ltwoan{L J_\epsilon  u}^2  \epsilon^{-2m-1} \Big(1 + \frac{\delta^2}{\epsilon^2} \Big)^{-1} d\epsilon \nonumber \\
  	 &\leq C \int_0^1 \Big(\Ltwoan{J_\epsilon f}^2 + \sum_{j = 0}^2 \Ltwoan{[A_j, J_\epsilon] \partial_j u}^2 \nonumber \\
  	 &\hspace{6 em} + \Ltwoan{[D, J_\epsilon] u}^2 \Big)  \epsilon^{-2m-1} \Big(1 + \frac{\delta^2}{\epsilon^2} \Big)^{-1} d\epsilon \nonumber \\
  	 &= C \int_J e^{-2 \gamma t} \int_0^1 \Big(\Ltwohn{J_\epsilon f(t)}^2 + \sum_{j=0}^2 \Ltwohn{[A_j, J_\epsilon] \partial_j u(t)}^2 \nonumber \\
  	 &\hspace{8 em} + \Ltwohn{[D, J_\epsilon] u(t)}^2 \Big)  \epsilon^{-2m-1} \Big(1 + \frac{\delta^2}{\epsilon^2} \Big)^{-1} d\epsilon \, dt \nonumber \\
  	 &\leq C \Hhtanwt{m-1}{f}{\delta}^2 + C \Hhtanwt{m-1}{u}{\delta}^2 + C \Hangamma{m-1}{\partial_t u}^2 ,
  \end{align}
  where we once again employed Lemma~\ref{LemmaPropertiesOfWeightedNorms} in 
  the last line. Since the matrix $B$ is constant, it commutes with the mollifier 
  $J_\epsilon$ so that $B J_\epsilon u = J_\epsilon g$ for all $\epsilon > 0$.
  We note that the proof of Lemma~\ref{LemmaPropertiesOfWeightedNorms}~\ref{ItemEstimateMollifierFromBelow}, 
  see Theorems~2.4.5 and~2.4.1 in~\cite{Hoermander}, shows that
  \begin{equation*}
   \int_0^1 \|J_\epsilon v\|_{H^{1/2}(\partial \R^3_+)}^2 \epsilon^{-2m-1} \Big(1 + \frac{\delta^2}{\epsilon^2} \Big)^{-1} d\epsilon \leq C \|v\|_{H^{m-1/2}_{\delta}(\partial \R^3_+)}^2
  \end{equation*}
  for all $v \in H^{m-1/2}(\partial \R^3_+)$ and $\delta \in (0,1)$. Here we identify $\partial \R^3_+$ 
  with $\R^2$ and make the natural adaptions in~\eqref{EquationWeightedTangentialNorms} to 
  define $H^{m-1/2}_{\delta}(\partial \R^3_+)$, see also Section~2.4 in~\cite{Hoermander}. 
  Consequently,
  \begin{align}
   \label{EquationEstimateForFirstPartOfgalphaprime}
   &\int_0^1 \|B J_\epsilon u\|_{L^2_\gamma(J, H^{1/2}(\partial \R^3_+))}^2 \epsilon^{-2m-1} \Big(1 + \frac{\delta^2}{\epsilon^2} \Big)^{-1} d\epsilon \\
   &=\int_J e^{- 2\gamma t} \int_0^1 \|J_\epsilon g(t)\|_{H^{1/2}(\partial \R^3_+)}^2 \epsilon^{-2m-1} \Big(1 + \frac{\delta^2}{\epsilon^2} \Big)^{-1} d\epsilon dt \nonumber \\
   &\leq C \int_J e^{-2\gamma t} \|g(t)\|_{H^{m-1/2}_\delta(\partial \R^3_+)}^2 dt
   \nonumber \\
   &\leq C \int_J e^{-2\gamma t} \|g(t)\|_{H^{m+1/2}(\partial \R^3_+)}^2 dt 
   \leq C \Enorm{m}{g}^2. \nonumber
  \end{align}
  For the initial value we note that Lemma~\ref{LemmaPropertiesOfWeightedNorms} directly 
  yields
  \begin{equation}
   \label{EquationEstimateForJepsInitialValue}
   \int_0^1 \Ltwohn{J_\epsilon  u_0}^2  \epsilon^{-2m-1} \Big(1 + \frac{\delta^2}{\epsilon^2} \Big)^{-1} d\epsilon 
   \leq C \Hhtanw{m-1}{u_0}{\delta}^2 \leq  C \Hhn{m}{u_0}^2.
  \end{equation}
  Inserting~\eqref{EquationApplyingAPrioriEstimatesToJepsPartialalphaPrime} to~\eqref{EquationEstimateForJepsInitialValue} 
  into~\eqref{EquationEstimateWeightedNormalphaprimeuAgainstConvolution}, we obtain that
  \begin{align}
  \label{EquationEstimateForpartialalphaprimeInHta0deltaEstimatesForRightHandSideInserted}
   &\sup_{t \in J} e^{-2 \gamma t} \Hhtanw{m-1}{u(t)}{\delta}^2 \\
   &\leq C \Gnorm{m-1}{u}^2  + C \Hhn{m}{u_0}^2 
   +C \Enorm{m}{g}^2 \nonumber \\
   &\qquad + \frac{C}{\gamma} \Big( \|f\|_{L^2_\gamma(J, \Hhta{m})}^2 + C \Hangamma{m-1}{\partial_t u}^2 + C \Hhtanwt{m-1}{u}{\delta}^2  \Big). \nonumber 
  \end{align}
  Choosing a number $\gamma$ large enough, we thus find a constant $K_1$ such that 
  \begin{align}
   \label{EquationEstimateForpartialalphaprimeInHta0deltaGammaChosen}
   \sup_{t \in J} e^{-2 \gamma t} \Hhtanw{m-1}{ u(t)}{\delta}^2  \leq K_1
  \end{align}
  for all $\delta \in (0,1)$.
  Hence, Lemma~\ref{LemmaPropertiesOfWeightedNorms}~\ref{ItemBoundedUniformInDeltaImpliesRegPlusOne} implies that 
  $u(t)$ belongs to $\Hhta{m}$ for all $t \in \clJ$ and that $u$ is 
  contained in $L^{\infty}(J, \Hhta{m})$.
    
  II) Applying Corollary~\ref{CorollaryRegularityInNormalDirectionL2InTime} inductively, we infer that 
  $u$ is an element of $\Ha{m}$. To establish that $u$
  belongs to $\G{m}$, we apply Lemma~\ref{LemmaEllerResult} again. 
  
  Fix a multi-index $\alpha \in \N_0^4$ with $|\alpha| = m$ and $ \alpha_0 = \alpha_3 = 0$.
  Since $u$ is a solution of~\eqref{IBVP} and we already know that 
  $u \in \Ha{m}$, we derive
 \begin{align*}
  L \partial^\alpha u  = \partial^\alpha f 
  - \sum_{j = 0 }^2 \sum_{0 < \beta \leq \alpha} \binom{\alpha}{\beta} \partial^\beta A_j \partial^{\alpha - \beta} \partial_j u 
  - \sum_{0 < \beta \leq \alpha} \binom{\alpha}{\beta} \partial^\beta D \partial^{\alpha - \beta} u
  =: f_\alpha,
 \end{align*}
 where $f_\alpha$ belongs to $\Ltwoa$. 
 Next consider the function $J_{\frac{1}{n}} \partial^\alpha u$, which belongs to $\G{0}$. As in~\eqref{EquationCommutingDiffOpAndMollifierWithoutNormalDerivatives}
  we compute
  \begin{equation*}
   L J_{\frac{1}{n}} \partial^\alpha u = J_{\frac{1}{n}} f_\alpha + \sum_{j = 0}^2 [A_j , J_{\frac{1}{n}}] \partial_j \partial^\alpha u + [D, J_{\frac{1}{n}}] \partial^\alpha u
  \end{equation*}
  for all $n \in \N$.
  As $f_\alpha$ is an element of $\Ltwoa$, we have 
  \begin{equation}
  \label{EquationConvergenceToftildealpha}
   J_{\frac{1}{n}} f_\alpha \longrightarrow f_\alpha
  \end{equation}
  in $\Ltwoa$ as $n \rightarrow \infty$. Arguing as in~\eqref{EquationCommutatorEstimateForAj} and~\eqref{EquationCommutatorConvergingToZeroAj}, 
  we further derive
  \begin{equation}
  \label{EquationConvergenceToZeroCommutatorAjJepsPartialalphau}
   \sum_{j = 0}^2 [A_j , J_{\frac{1}{n}}] \partial_j \partial^\alpha u + [D,J_{\frac{1}{n}}] \partial^\alpha u \longrightarrow 0
  \end{equation}
  in $\Ltwoa$ as $n \rightarrow \infty$ since $u$ belongs to $\Ha{m}$.
  Since $g$ belongs to $\E{m}$ and  $u_0$ to $\Hh{m}$, the functions $B J_{\frac{1}{n}} \partial^{\alpha} u = J_{\frac{1}{n}} \partial^{\alpha} g$ and 
  $J_{\frac{1}{n}} \partial^{\alpha} u_0$ tend to $\partial^{\alpha} g$ in $\E{0}$ respectively to $\partial^\alpha u_0$ in $\Ltwoh$ as $n \rightarrow \infty$.
  Applying Lemma~\ref{LemmaEllerResult}, we get a constant $C_0$ and a number $\gamma > 0$ 
  such that
  \begin{align*}
   &\Gnorm{0}{J_{\frac{1}{n}} \partial^\alpha u - J_{\frac{1}{k}} \partial^\alpha u}^2 \leq C_0 \Ltwohn{J_{\frac{1}{n}} \partial^\alpha u_0 - J_{\frac{1}{k}} \partial^\alpha u_0}^2 \\
   &\quad + C_0 \Enorm{0}{B J_{\frac{1}{n}} \partial^\alpha u  - B J_{\frac{1}{k}} \partial^\alpha u}^2 + \frac{C_0}{\gamma} \Ltwoan{L J_{\frac{1}{n}} \partial^\alpha u - L J_{\frac{1}{k}} \partial^{\alpha} u}^2
  \end{align*}
  for all $n,k \in \N$. We conclude that 
  $(J_{\frac{1}{n}} \partial^\alpha u)_n$ is a Cauchy sequence in $\G{0}$. As $(J_{\frac{1}{n}} \partial^\alpha u)_n$ 
  converges to $\partial^\alpha u$ in $\Ltwoa$, we obtain that 
  $\partial^\alpha u$ is an element of $\G{0}$.
\end{proof}

In the next result we show how to gain one derivative in time. We study the initial boundary value problem 
formally solved by $\partial_t u$. The time integral of the solution of this problem then 
coincides with $u$. Here one sees explicitly where the compatibility conditions are needed.
\begin{lem}
 \label{LemmaBasicRegularityInTime}
 Let $\eta > 0$. Take coefficients $A_0 \in \Fupdwl{3}{cp}{\eta}$, $A_1, A_2 \in \Fcoeff{3}{cp}$, $A_3 = A_3^{\operatorname{co}}$,
 $D \in \Fuwl{3}{cp}$, and $B = B^{\operatorname{co}}$. Choose data $u_{0} \in \Hh{1}$, $g \in  \E{1}$,
 and $f \in \Ha{1}$. Assume that the tuple $(0, A_0,\ldots, A_3, D, B, f, g,  u_0)$ 
 fulfills the compatibility conditions~\eqref{EquationCompatibilityConditionPrecised} of order $1$. 
 Let $u \in C(\clJ, \Ltwoh)$ be the weak solution of~\eqref{IBVP} with differential operator $L=L(A_0,\ldots, A_3, D)$, inhomogeneity $f$,
 boundary value $g$, and initial value $u_{0}$. Assume that $u \in C^1(\overline{J'}, \Ltwoh)$ implies 
 $u \in G_1(J' \times \R^3_+)$ for every open interval $J' \subseteq J$. 
 Then $u$ belongs to $\G{1}$.
\end{lem}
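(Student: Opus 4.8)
The plan is to identify $\partial_t u$ with the $L^2$-solution of an auxiliary initial boundary value problem and to recover $u$ by integrating that solution in time; the compatibility condition enters exactly when matching the boundary data after the integration. Set $v_0 := S_{1,1}(0,A_0,\ldots,A_3,D,f,u_0) = A_0(0)^{-1}\bigl(f(0) - \sum_{j=1}^3 A_j\partial_j u_0 - D(0)u_0\bigr)$, which belongs to $\Ltwoh$ since $f \in \Ha{1} \hookrightarrow C(\clJ,\Ltwoh)$, $u_0 \in \Hh{1}$, $\|A_0(0)^{-1}\| \leq \eta^{-1}$, and $A_1,A_2,A_3,D$ are bounded. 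For $v \in C(\clJ,\Ltwoh)$ put $F(v) := \partial_t f - (\partial_t A_0)\,v - (\partial_t D)\bigl(u_0 + \int_0^{\cdot} v(s)\,ds\bigr)$, which lies in $\Ltwoa$ because $\partial_t f \in \Ltwoa$ and $\partial_t A_0, \partial_t D \in L^\infty(\Omega)$. Let $Tv \in C(\clJ,\Ltwoh)$ denote the weak solution of~\eqref{IBVP} with differential operator $L$, inhomogeneity $F(v)$, boundary value $\partial_t g \in L^2(J,H^{1/2}(\partial\R^3_+))$ (admissible since $g \in \E{1}$), and initial value $v_0$; such a solution exists by Lemma~\ref{LemmaEllerResult}.

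Next I show $T$ is a contraction. For $v,v' \in C(\clJ,\Ltwoh)$ the function $Tv - Tv'$ solves~\eqref{IBVP} with vanishing initial and boundary data and inhomogeneity $F(v)-F(v') = -(\partial_t A_0)(v-v') - (\partial_t D)\int_0^{\cdot}(v-v')$. Applying estimate~\eqref{EquationEllerEstimateInL2} and using Minkowski's and H{\"o}lder's inequalities in time to bound $\Ltwoan{F(v)-F(v')}$ by $C\,\Ltwoan{v-v'}$, one obtains, for $\gamma \geq \gamma_0$, the inequality $\sup_{t\in J}\Ltwohn{e^{-\gamma t}(Tv-Tv')(t)} \leq C\gamma^{-1/2}\sup_{t\in J}\Ltwohn{e^{-\gamma t}(v-v')(t)}$. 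Hence, for $\gamma$ large enough, $T$ is a contraction on $C(\clJ,\Ltwoh)$ with the equivalent norm $v \mapsto \sup_{t\in J}\Ltwohn{e^{-\gamma t}v(t)}$ and therefore has a unique fixed point $v$, which satisfies
\begin{equation*}
 A_0 \partial_t v + \sum_{j=1}^3 A_j \partial_j v + D v = \partial_t f - (\partial_t A_0)\,v - (\partial_t D)\Bigl(u_0 + \int_0^{\cdot} v\Bigr), \quad \Tr(Bv) = \partial_t g, \quad v(0) = v_0 .
\end{equation*}

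Finally, set $\tilde u(t) := u_0 + \int_0^t v(s)\,ds$. Since $v \in C(\clJ,\Ltwoh)$, we have $\tilde u \in C^1(\clJ,\Ltwoh)$ with $\partial_t \tilde u = v$ and $\tilde u(0) = u_0$. Differentiating the (to-be-proved) identity $A_0 v = f - \sum_{j=1}^3 A_j\partial_j\tilde u - D\tilde u$ in time in $\Hh{-1}$, using that $\partial_t A_j = 0$ (the spatial coefficients are independent of time) together with the equation for $v$ above, one sees that both sides have equal time derivative; since they coincide at $t=0$ by the choice of $v_0$, they coincide on all of $\clJ$, i.e.\ $L\tilde u = f$. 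Moreover $\Tr(B\,\cdot\,)$ is linear, continuous, and commutes with integration in time, and $\Tr(Bu_0) = g(0)$ by the compatibility condition~\eqref{EquationCompatibilityConditionPrecised} of order $1$, so $\Tr(B\tilde u)(t) = g(0) + \int_0^t \partial_t g(s)\,ds = g(t)$. Thus $\tilde u$ is a weak solution of~\eqref{IBVP} with the same data as $u$, whence $\tilde u = u$ by the uniqueness statement in Lemma~\ref{LemmaEllerResult}. In particular $u \in C^1(\clJ,\Ltwoh)$, and the hypothesis of the lemma, applied with $J' = J$, yields $u \in \G{1}$.

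The main obstacle is the self-referential structure of the auxiliary problem: its inhomogeneity $F(v)$ depends on $v$ through the Volterra term $\int_0^{\cdot} v$, so it must be solved by a fixed-point argument rather than by a single application of Lemma~\ref{LemmaEllerResult}, and the exponential weight built into~\eqref{EquationEllerEstimateInL2} is precisely what makes the contraction work on the whole interval $J$ at once. A second delicate point is the verification of $L\tilde u = f$, where the time-independence of $A_1, A_2, A_3$ is essential: were it to fail, $F(v)$ would have to contain $\sum_j (\partial_t A_j)\partial_j u$, which cannot be controlled before knowing $u \in \Hh{1}$.
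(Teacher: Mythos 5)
Your proof is correct, and it follows the same core strategy as the paper --- cast the equation formally satisfied by $\partial_t u$ as a Volterra-type IBVP, solve it by Banach's fixed point theorem, integrate back, and use the compatibility condition of order $1$ to recover the boundary condition $\Tr(B\tilde u)=g$ --- but the implementation of the fixed-point step is genuinely different. The paper runs the contraction on a ball $B_R$ in $C(\overline{J'},\Ltwoh)$ over a short time interval $J'$ of length $T_s$ chosen so small that $C_0 r^2 T_s \le 1/2$, and then iterates across $J$ in finitely many steps of length $T_s$, each time re-verifying the $H^1$-bound on the new initial value via Theorem~\ref{TheoremAPrioriEstimates} and re-checking compatibility at the new initial time via~\eqref{EquationDifferentiatedBoundaryCondition}. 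You instead exploit the $\gamma^{-1}$ gain in the $L^2$-estimate~\eqref{EquationEllerEstimateInL2}: with a $\gamma$-weighted $\sup$-norm on $C(\clJ,\Ltwoh)$, the entire map $T$ becomes a contraction on the whole interval for $\gamma$ large, so no time-stepping, no ball, and no re-verification of compatibility along the way are needed. This is a cleaner route; the price is that it relies on $C_0$ in~\eqref{EquationEllerEstimateInL2} being independent of $\gamma$ (which it is), whereas the paper's small-time argument would survive even without that structure. Two minor points worth making explicit in a polished write-up: (i) the existence of the weak time derivative of $A_0 v$ in $L^2(J,\Hh{-1})$ (which the paper delegates to a separate lemma) should be justified before differentiating the would-be identity $A_0 v+\sum_j A_j\partial_j\tilde u+D\tilde u=f$; and (ii) the fixed-point map should be stated on the full space $C(\clJ,\Ltwoh)$ equipped with the weighted $\sup$-norm rather than phrased as if it were defined only pointwise --- but both are cosmetic, not gaps.
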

\begin{proof}
 Without loss of generality we assume $J = (0,T)$. Take $r > 0$ such that
 \begin{align}
 \label{EquationBoundingCoefficientsForAllt}
  &\Fnorm{3}{A_i} \leq r, \quad \Fnorm{3}{D} \leq r, \nonumber\\
  &\max \{\Fvarnorm{2}{A_i(t)},\max_{1 \leq j \leq 2} \Hhn{2-j}{\partial_t^j A_0(t)}\} \leq r, \nonumber\\
  &\max \{\Fvarnorm{2}{D(t)},\max_{1 \leq j \leq 2} \Hhn{2-j}{\partial_t^j D(t)}\} \leq r
 \end{align}
 for all $t \in \clJ$ and $i \in \{0, 1, 2\}$.
 Let $\gamma = \gamma(\eta, r,T)$ be defined by 
 \begin{align*}
  \gamma = \max\{\gamma_{\ref{LemmaEllerResult};0}, \gamma_{\ref{TheoremAPrioriEstimates};1} \} \geq 1,
 \end{align*}
 where $\gamma_{\ref{LemmaEllerResult};0} = \gamma_{\ref{LemmaEllerResult};0}(\eta,r)$ and 
 $\gamma_{\ref{TheoremAPrioriEstimates};1} = \gamma_{\ref{TheoremAPrioriEstimates};1}(\eta,r,T)$ are the 
 corresponding constants from Lemma~\ref{LemmaEllerResult} and Theorem~\ref{TheoremAPrioriEstimates} respectively.
 We further introduce the constant $C_0 = C_0(\eta,r,T)$ by 
 \begin{align*}
  C_0 = \max\{C_{\ref{LemmaEllerResult};0,0}, C_{\ref{LemmaEllerResult};0,1}, C_{\ref{LemmaEllerResult};0},  C_{\ref{TheoremAPrioriEstimates};1}, 
      (C_{\ref{TheoremAPrioriEstimates};1,0} + T C_{\ref{TheoremAPrioriEstimates};1}) e^{C_{\ref{TheoremAPrioriEstimates};1} T}, 
      C_{\ref{LemmaEstimatesForHigherOrderInitialValues};1,1}\} \geq 1,
 \end{align*}
 where again $C_{\ref{LemmaEllerResult};0,0} = C_{\ref{LemmaEllerResult};0,0}(\eta,r)$, 
  $C_{\ref{LemmaEllerResult};0} = C_{\ref{LemmaEllerResult};0}(\eta,r)$,
 $C_{\ref{TheoremAPrioriEstimates};1} = C_{\ref{TheoremAPrioriEstimates};1}(\eta,r,T)$,
 and $C_{\ref{LemmaEstimatesForHigherOrderInitialValues};1,1} = C_{\ref{LemmaEstimatesForHigherOrderInitialValues};1,1}(\eta,r)$
 are the corresponding constants from Lemma~\ref{LemmaEllerResult}, Theorem~\ref{TheoremAPrioriEstimates}, 
 and Lemma~\ref{LemmaEstimatesForHigherOrderInitialValues} respectively.
 Finally, we set 
 \begin{align*}
  R_1 = C_0 e^{2 \gamma T} (\Gnorm{0}{f}^2 + \Hangamma{1}{f}^2 + \Enorm{1}{g}^2 + \Hhn{1}{u_0}^2).
 \end{align*}
 
  I) Take  $t_0 \in \clJ$ and assume that $u(t_0) \in \Hh{1}$ with $\Hhn{1}{u(t_0)}^2 \leq R_1$. 
  We show the existence of a time step $T_s > 0$ and a function $v \in C([t_0, T_s'], \Ltwoh)$ satisfying
 \begin{equation}
 \label{EquationForFixedPointRegularityOrder1}
  \left\{ \begin{aligned}
    L_{\partial_t} v   &= \partial_{t} f - \partial_t D \Big(\int_{t_0}^t v(s) ds +  u(t_0) \Big), \quad &&x \in \R^3_+,  && t \in J'; \\
    B v &= \partial_t g, \quad &&x \in \partial \R^3_+,  &&t \in J'; \\
    v(t_0) &= S_{1,1}(t_0, A_0, \ldots, A_3,D, f, u(t_0)), \quad &&x \in \R^3_+;
  \end{aligned} \right.
 \end{equation}
 where we abbreviate
 \begin{equation*} 
   L_{\partial_t} = L(A_0, \ldots, A_3, \partial_t A_0 + D)
 \end{equation*}
 and define $T_s' := \min\{t_0 + T_s, T\}$ and $J' := (t_0, T_s')$.
 Recall that the function $S_{1,1}(t_0, A_0, \ldots, A_3, D, f, u(t_0))$ belongs to $\Ltwoh$ by Lemma~\ref{LemmaEstimatesForHigherOrderInitialValues}.
 
 Take a number $T_s \in (0,T)$ to be fixed below and define $J'$ and $T_s'$ as above. We further set $\Omega' = J' \times \R^3_+$.
 Let $w \in C(\overline{J'}, \Ltwoh)$. Note that $\partial_t A_0 + D$  
 and $\partial_t D$ still belong to 
 $L^\infty(\Omega)$. Hence, the problem
 \begin{equation*}
  \left\{ \begin{aligned}
    L_{\partial_t} v   &= \partial_{t} f - \partial_t D \Big(\int_{t_0}^t w(s) ds +  u(t_0) \Big), \quad &&x \in \R^3_+, \quad &&t \in J'; \\
    B v &= \partial_t g, \quad &&x \in \partial \R^3_+, &&t \in J'; \\
    v(t_0) &= S_{1,1}(t_0, A_0, \ldots, A_3, D, f, u(t_0)), &&x \in \R^3_+,
  \end{aligned} \right.
 \end{equation*}
 has a unique solution $\Phi(w)$ in $C(\overline{J'}, \Ltwoh)$ by Lemma~\ref{LemmaEllerResult}.
 We next define
 \begin{align}
 \label{EquationIntroducingFixedPointSpaceForTimeRegularity}
  B_R = \{v \in C(\overline{J'}, \Ltwoh)\colon  \GnormPrime{0}{v} \leq R\},
 \end{align}
 where $R > 0$ will be fixed below. Equipped with the metric induced by the $\Ggamma{0}$-norm this is a complete 
 metric space. Let $w \in B_R$. Employing 
 H\"older's and Minkowski's inequality, Lemma~\ref{LemmaEllerResult}, and the bound
 \begin{align*}
 \Ltwohn{S_{1,1}(t_0, A_0,\ldots, A_3, D, f, u(t_0))}^2 &\leq 2 C^2_{\ref{LemmaEstimatesForHigherOrderInitialValues};1,1} (\Ltwohn{f(t_0)}^2 + \Hhn{1}{u(t_0)}^2) \\
 &\leq 4 C_0^2 R_1
 \end{align*}
  from Lemma~\ref{LemmaEstimatesForHigherOrderInitialValues}, we estimate
 \begin{align}
 \label{EquationSelfMappingForOrder1}
  &\GnormPrime{0}{\Phi(w)}^2 \leq C_0 \Big\| \partial_t f - \partial_t D  \int_{t_0}^t w(s) ds - \partial_t D\, u(t_0)  \Big\|_{L^2_\gamma(\Omega')}^2  \\
  &\quad + C_0 \|\partial_t g\|_{E_{0,\gamma}(J' \times \partial \R^3_+)}^2 + C_0 \Ltwohn{S_{1,1}(t_0, A_0, \ldots, A_3, D, f, u(t_0))}^2 \nonumber\\
      &\leq 2 C_0 r^2 \int_{t_0}^{T_s'} e^{-2\gamma t} \Big(\int_{t_0}^t \Ltwohn{w(s)} ds + \Ltwohn{u(t_0)} \Big)^2 dt  + 4 (1 + C_0^2) R_1 \nonumber\\
      &\leq 4 C_0 r^2 T_s \GnormPrime{0}{w}^2 + 4 C_0 r^2 T_s R_1 + 8 C_0^2  R_1. \nonumber
 \end{align}
 We now set 
 \begin{align*}
  R = (18 C_0^2 R_1)^{1/2}
 \end{align*}
 in~\eqref{EquationIntroducingFixedPointSpaceForTimeRegularity} and choose $T_s \in (0,T)$ so small that 
 \begin{align*}
  4 C_0 r^2 T_s \leq \frac{1}{2}.
 \end{align*} 
 We point out that $T_s$ is independent of $t_0$. Using~\eqref{EquationIntroducingFixedPointSpaceForTimeRegularity}
 and this choice of $R$ and $T_s$, we obtain from~\eqref{EquationSelfMappingForOrder1}
 \begin{align*}
  \GnormPrime{0}{\Phi(w)}^2 \leq \frac{R^2}{2} + \frac{R^2}{2} = R^2
 \end{align*}
 for all $w \in B_R$, i.e., $\Phi(B_R) \subseteq B_R$. Moreover, 
 Lemma~\ref{LemmaEllerResult} implies that
 \begin{align*}
  &\GnormPrime{0}{\Phi(w_1) - \Phi(w_2)}^2 \leq C_0 \Big\|\partial_t D \int_{t_0}^t (w_1(s) - w_2(s)) ds  \Big\|_{L^2_\gamma(\Omega')}^2 \\
      &\leq C_0 \|\partial_t D\|_{L^\infty(\Omega')}^2 T_s \GnormPrime{0}{w_1 - w_2}^2 \leq C_0 r^2 T_s \GnormPrime{0}{w_1 - w_2}^2 \\
      &\leq \frac{1}{2} \GnormPrime{0}{w_1 - w_2}^2
 \end{align*}
 for all $w_1, w_2 \in B_R$. The contraction mapping principle thus gives a unique $v \in B_R$ with 
 $\Phi(v) = v$ on $J'$, i.e., $v$ is the asserted solution of~\eqref{EquationForFixedPointRegularityOrder1}.
 
 II) In this step we assume that $u(t_0)$ belongs to $\Hh{1}$ with $\Hhn{1}{u(t_0)}^2 \leq R_1$ 
 and that $(t_0, A_0, \ldots, A_3, D, f,g, u(t_0))$ fulfills the compatibility 
 conditions~\eqref{EquationCompatibilityConditionPrecised} of order one; i.e., $\tr (B u(t_0)) = g(t_0)$. 
 
 Let $J'$ be defined as in step I) and let $v$ be the solution 
 of~\eqref{EquationForFixedPointRegularityOrder1} constructed in step I). 
 A straightforward computation shows that $A_0 v$ has a weak time derivative in 
 $L^2(J', \Hh{-1})$ and
  \begin{align*}
   \partial_t(A_0 v) = \partial_t f - \partial_t D \Big(\int_{t_0}^t v(s) ds + u(t_0) \Big) -\sum_{j=1}^3 A_j \partial_j v -D v,
  \end{align*}
  see~\cite[Lemma~4.7]{SpitzDissertation} for details. We set 
  \begin{align*}
   w(t) = u(t_0) + \int_{t_0}^t v(s) ds
  \end{align*}
  for all $t \in \overline{J'}$. Observe that $w$ belongs to $C^1(\overline{J'}, \Ltwoh)$ with $w(t_0) = u(t_0)$. 
  Employing~\eqref{EquationForFixedPointRegularityOrder1} and~\eqref{EquationDefinitionSmp} we then compute in $\Hh{-1}$
  \begin{align*}
   &L w(t) = (A_0 v)(t) + \int_{t_0}^t \Big(\sum_{j = 1}^3 A_j \partial_j v(s) \Big) ds 
   + \sum_{j = 1}^3 A_j \partial_j u(t_0) + (D w)(t) \\
      &= \int_{t_0}^t \Big( \partial_t (A_0 v)(s)+ \sum_{j = 1}^3 A_j \partial_j v(s) \Big) ds + (D w)(t)  + (A_0 v) (t_0) + \sum_{j = 1}^3 A_j \partial_j u(t_0) \\
      &= \int_{t_0}^t (\partial_t f(s) - (\partial_t D w)(s) - (D v)(s))ds + (D w)(t) \\
      &\quad + A_0(t_0) S_{1,1}(t_0,A_0,\ldots, A_3, D,f,u(t_0)) + \sum_{j = 1}^3 A_j \partial_j u(t_0)  \\
      &= f(t) - f(t_0) - \int_{t_0}^t \partial_t (D w)(s) ds + D w(t) + f(t_0) - (D w)(t_0) = f(t)
  \end{align*}
  for all $t \in \overline{J'}$. In particular, $L(A_0,\ldots, A_3, D)w$ belongs to $\Ltwoa$. 
  
  To compute the trace of $B w$ on $\Gamma' = J' \times \partial \R^3_+$, we stress that 
  $\Tr(B v) = \partial_t g$ on $\Gamma'$ by~\eqref{EquationForFixedPointRegularityOrder1}.
  Moreover, the trace operator $\Tr$ commutes with integration in time here, see~\cite[Corollary~2.18]{SpitzDissertation} for the proof.
  Since $(t_0, A_0, \ldots, A_3, D, f,g, u(t_0))$ fulfills the 
  compatibility conditions of order one, we thus infer
  \begin{align*}
   \Tr (B w)(t) &= \Tr \Big( B \int_{t_0}^t v(s) ds + B u(t_0)\Big) =\int_{t_0}^t \Tr (B v)(s) ds + \tr(B u(t_0)) \\
    &= \int_{t_0}^t \partial_t g(s) ds + g(t_0) = g(t)
  \end{align*}
  for all $t \in \overline{J'}$.
  The function $w \in C^1(\overline{J'}, \Ltwoh)$ consequently solves~\eqref{IBVP} on $\Omega'$ with initial value $u(t_0)$ 
  at initial time $t_0$. 
  As $u$ also solves~\eqref{IBVP} on $\Omega'$ with inhomogeneity $f$, boundary value $g$, 
  and initial 
  value $u(t_0)$ in $t_0$, the uniqueness statement in Lemma~\ref{LemmaEllerResult} yields 
  $u = w$ on $\Omega'$. (Here we use the obvious variant of the lemma for the initial time $t_0$.)
  We conclude that $u$ is an element of $C^1(\overline{J'}, \Ltwoh)$.
  The assumptions therefore tell us that $u$ belongs to $\GPrime{1}$.
  
  III) We next consider $t_0 = 0$. Since $u(0) = u_0 \in \Hh{1}$, $\Hhn{1}{u_0}^2 \leq R_1$, and 
  $(0, A_0, \ldots, A_3, D, B, f, g, u_0)$ fulfills the compatibility conditions of first order by assumption, step II) shows
  that $u$ belongs to 
  $G_1((0, T_0) \times \R^3_+)$, where we set $T_0 = \min\{T_s, T\}$.
  If $T_0 = T$ we are done. Otherwise, we apply Theorem~\ref{TheoremAPrioriEstimates} to obtain
  \begin{align*}
   \Gnorm{1}{u}^2 &\leq C_0 ( \Gnorm{0}{f}^2  + \Hhn{1}{u_0}^2 + \Enorm{1}{g}^2 + \frac{1}{\gamma} \Hangamma{1}{f}^2) \\
      &\leq e^{-2\gamma T} R_1.
  \end{align*}
  We conclude that $\Hhn{1}{u(T_0)}^2 \leq R_1$. Moreover,  $(T_0, A_0, \ldots, A_3, D, B, f, g, u(T_0))$ fulfills the compatibility 
  conditions of first order by~\eqref{EquationDifferentiatedBoundaryCondition}
  since $u$ is a solution in $G_1(J' \times \R^3_+)$. We can therefore  apply 
  step II) with $t_0 = T_0$. We see that $u$ belongs to $G_1((T_0, T_1) \times \R^3_+)$, with 
  $T_1 = \min\{T, T_0 + T_s\}$. Since 
  \begin{align*}
   \partial_t u_{|[0,T_0]}(T_0) = S_{1,1}(T_0, A_0, \ldots, A_3, D,f,u(T_0)) = \partial_t u_{|[T_0, T_1]}(T_0),
  \end{align*}
  we infer
  $u \in G_1((0, T_1) \times \R^3_+)$. In this way we iterate. Since the time step $T_s$ does not depend on $t_0$, we 
  are done after finitely many steps. We conclude that $u$ is an element of $G_1((0,T) \times \R^3_+)$.
\end{proof}


We want to iterate the previous result in order to deduce higher order regularity. 
To that purpose we need a relation between the operators $S_{m,p}$ of different order, which is 
stated in the next lemma.
Its assertion follows inductively from the definition of the operators $S_{m,p}$ 
and a straightforward computation. We refer to~\cite[Lemma~4.8]{SpitzDissertation} for the details.
\begin{lem}
 \label{LemmaHigherOrderCompatibilityConditions}
 Let  $\eta > 0$, $m \in \N$ and $\tilde{m} = \max\{m,3\}$. Take $A_0 \in \Fupdwl{\max\{m+1,3\}}{cp}{\eta}$ with 
 $\partial_t A_0 \in \Fuwl{\tilde{m}}{cp}$ and $D \in \Fuwl{\max\{m+1,3\}}{cp}$. Let $A_1$, $A_2 \in \Fcoeff{\max\{m+1,3\}}{cp}$, 
 $A_3 = A_3^{\operatorname{co}}$, and $B = B^{\operatorname{co}}$.
 Choose $t_0 \in \clJ$, 
 $u_0 \in \Hh{m+1}$, $g \in \E{m+1}$, and $f \in \Ha{m+1}$. 
 Assume that $u \in \G{m}$ solves~\eqref{IBVP} with differential operator $L(A_0,\ldots, A_3, D)$, 
 inhomogeneity $f$, boundary value $g$, and initial value $u_0$.
 Set $u_1 = S_{m+1,1}(t_0, A_0, \ldots,A_3, D, f, u_0)$ and $f_1 = \partial_t f - \partial_t D u$.
 Then
 \begin{align*}
  S_{m, p}(0,A_0, \ldots, A_3, \partial_t A_0 + D, f_1, u_1) = S_{m+1, p+1}(0,A_0, \ldots, A_3, D, f,u_0)
 \end{align*}
 for all $p \in \{0, \ldots, m-1\}$.
\end{lem}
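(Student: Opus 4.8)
The claim is an identity between the finitely many functions produced by the recursion~\eqref{EquationDefinitionSmp}, so the natural approach is induction on $p$. Write $T_q:=S_{m+1,q}(0,A_0,\ldots,A_3,D,f,u_0)$ and $\tilde T_q:=S_{m,q}(0,A_0,\ldots,A_3,\partial_t A_0+D,f_1,u_1)$; note that the recursion~\eqref{EquationDefinitionSmp} does not actually involve its first index, so these are well defined for $q\le m+1$ resp.\ $q\le m$, and the assertion reads $\tilde T_p=T_{p+1}$ for $0\le p\le m-1$. The base case $p=0$ holds by definition of the operators: $\tilde T_0$ is the initial-value slot $u_1$, and $u_1=S_{m+1,1}(0,\ldots)=T_1$.

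For the inductive step I fix $1\le p\le m-1$, assume $\tilde T_q=T_{q+1}$ for all $q<p$, and first reduce $\partial_t^{p-1}f_1(0)$ to quantities built from $u_0$, $f$ and the coefficients. Since $f_1=\partial_t f-\partial_t D\,u$ and $u\in\G{m}$ solves~\eqref{IBVP}, the time traces $\partial_t^q u(0)$ exist in $\Hh{m-q}$ and equal $S_{m+1,q}(0,A_0,\ldots,A_3,D,f,u_0)=T_q$ for $0\le q\le m$ by~\eqref{EquationTimeDerivativesOfSolution}; Leibniz' rule therefore gives
\begin{align*}
 \partial_t^{p-1}f_1(0)=\partial_t^p f(0)-\sum_{l=0}^{p-1}\binom{p-1}{l}\partial_t^{l+1}D(0)\,T_{p-1-l}.
\end{align*}
Here one uses Lemma~\ref{LemmaRegularityForA0} and Lemma~\ref{LemmaEstimatesForHigherOrderInitialValues} together with the regularity hypotheses to see that $f_1\in\Ha{m}$ and that every term below lies in the Sobolev space needed to apply~\eqref{EquationDefinitionSmp}. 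Next I write out~\eqref{EquationDefinitionSmp} for $\tilde T_p$ with last coefficient $\partial_t A_0+D$, using $\partial_t^l(\partial_t A_0+D)(0)=\partial_t^{l+1}A_0(0)+\partial_t^l D(0)$, replace each $\tilde T_q$ with $q<p$ by $T_{q+1}$ via the induction hypothesis, insert the formula for $\partial_t^{p-1}f_1(0)$, and compare with~\eqref{EquationDefinitionSmp} for $T_{p+1}$. The factor $A_0(0)^{-1}$, the term $-\sum_{j=1}^3 A_j\partial_j\tilde T_{p-1}=-\sum_{j=1}^3 A_j\partial_j T_p$, and the term $\partial_t^p f(0)$ coincide immediately.

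It remains to match the $\partial_t^\bullet A_0(0)$- and $\partial_t^\bullet D(0)$-contributions, and this is the only genuine computation; it comes down to Pascal's identity. After the substitution $\tilde T_q\to T_{q+1}$ and the index shift $l\mapsto l+1$ in the sum coming from $\partial_t^{l+1}A_0(0)$, the $A_0$-terms on the left-hand side combine to $-\sum_{l=1}^{p}\bigl(\binom{p-1}{l}+\binom{p-1}{l-1}\bigr)\partial_t^l A_0(0)\,T_{p+1-l}$; likewise the two $D$-sums (one of them being the contribution of $\partial_t^{p-1}f_1(0)$ after the shift $l\mapsto l+1$) combine to $-\sum_{l=0}^{p}\bigl(\binom{p-1}{l}+\binom{p-1}{l-1}\bigr)\partial_t^l D(0)\,T_{p-l}$, with the convention $\binom{p-1}{-1}=\binom{p-1}{p}=0$. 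Pascal's rule turns these into $-\sum_{l=1}^{p}\binom{p}{l}\partial_t^l A_0(0)\,T_{p+1-l}$ and $-\sum_{l=0}^{p}\binom{p}{l}\partial_t^l D(0)\,T_{p-l}$, which are precisely the corresponding sums in~\eqref{EquationDefinitionSmp} for $T_{p+1}$. Hence $\tilde T_p=T_{p+1}$, closing the induction. I expect no analytic difficulty here: the only things to watch are that the index shifts in the Leibniz expansions are carried out consistently and that the out-of-range binomial coefficients are treated uniformly, so that every $A_0$- and $D$-coefficient ends up in the right summation range.
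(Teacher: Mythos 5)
Your proof is correct and follows the same route the paper indicates (induction on $p$ using the recursion~\eqref{EquationDefinitionSmp}, with the key computation being the Leibniz expansion of $\partial_t^{p-1}f_1(0)$ followed by an index shift and Pascal's rule on the $\partial_t^\bullet A_0(0)$- and $\partial_t^\bullet D(0)$-sums). The index bookkeeping, including the boundary terms $l=0$ and $l=p$ absorbed via the convention $\binom{p-1}{-1}=\binom{p-1}{p}=0$, checks out.
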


The combination of the previous results with an iteration argument then yields the desired 
regularity of the solution $u$ provided the coefficients are additionally elements of 
$C^{\infty}(\overline{\Omega})$.
\begin{prop}
 \label{PropositionRegularityForApproximatingProblem}
 Let $\eta > 0$, $m \in \N$, and $\tilde{m} = \max\{m,3\}$. 
 Choose coefficients 
 $A_0 \in \Fupdwl{\tilde{m}}{cp}{\eta}$, $A_1, A_2 \in \Fcoeff{\tilde{m}}{cp}$, $A_3 = A_3^{\operatorname{co}}$,
  $D \in \Fuwl{\tilde{m}}{cp}$, and $B = B^{\operatorname{co}}$. Assume that these coefficients are  
  contained in $C^\infty(\overline{\Omega})$.
 Take data $f \in \Ha{m}$, $g \in \E{m}$, and $u_0 \in \Hh{m}$ such that the tuple $(0, A_0, \ldots, A_3, D, B, f, g, u_0)$ 
 satisfies the compatibility conditions~\eqref{EquationCompatibilityConditionPrecised} 
 of order $m$, i.e.,
 \begin{align*}
  \Tr (B S_{m,l}(0,A_0,\ldots, A_3, D,f,u_0)) = \partial_t^l g(0) \quad \text{for } 0 \leq l \leq m - 1.
 \end{align*}
 Let $u$ be the weak solution of~\eqref{IBVP} with differential operator $L = L(A_0,\ldots, A_3, D)$, inhomogeneity $f$, 
 boundary value $g$, and initial value $u_0$. 
 Then $u$ belongs to $\G{m}$.
\end{prop}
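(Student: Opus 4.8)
The plan is to prove the proposition by induction on $m$, using the three regularization lemmas---Lemma~\ref{LemmaRegularityInNormalDirection}, Lemma~\ref{LemmaRegularityInSpaceTangential}, Lemma~\ref{LemmaBasicRegularityInTime}---together with the compatibility bookkeeping of Lemma~\ref{LemmaHigherOrderCompatibilityConditions}. We may take $t_0 = 0$, and recall that the weak solution $u$ exists and is unique by Lemma~\ref{LemmaEllerResult}. For $m = 0$ there is nothing to prove, since $u \in C(\clJ, \Ltwoh) = \G{0}$. For $m = 1$ I would invoke Lemma~\ref{LemmaBasicRegularityInTime}: its coefficient and data hypotheses are met because $\tilde{m} = 3$, $f \in \Ha{1}$, $g \in \E{1}$, $u_0 \in \Hh{1}$, and the first-order compatibility conditions hold. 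Its remaining, structural hypothesis---that $u|_{J'} \in C^1(\overline{J'}, \Ltwoh)$ forces $u|_{J'} \in G_1(J' \times \R^3_+)$ for every open $J' \subseteq J$---I would check by applying Lemma~\ref{LemmaRegularityInSpaceTangential} (with $m = 1$) to the restricted problem to obtain $\partial_1 u, \partial_2 u \in C(\overline{J'}, \Ltwoh)$ and then Lemma~\ref{LemmaRegularityInNormalDirection} (with $m = 1$, $k = 1$) to obtain $\partial_3 u \in C(\overline{J'}, \Ltwoh)$; here one uses $\Ha{1} \subseteq \Hata{1}$, the $C^\infty$-smoothness of the coefficients, and the fact that whenever this implication is invoked the initial value at the left endpoint of $J'$ lies in $\Hh{1} \subseteq \Hhta{1}$. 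This gives $u \in \G{1}$.

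For $m \geq 2$ I would assume the assertion for $m - 1$. Applied to the present data and coefficients---which in particular satisfy the order-$(m-1)$ compatibility conditions---the inductive hypothesis gives $u \in \G{m-1}$, hence $\partial_t u \in \G{m-2}$. Differentiating the three lines of~\eqref{IBVP} in time, which is legitimate since $u \in \G{m-1}$ and $m - 1 \geq 1$, shows that $v := \partial_t u$ is the weak solution of~\eqref{IBVP} for the operator $L_{\partial_t} = L(A_0, \ldots, A_3, \partial_t A_0 + D)$, with inhomogeneity $f_1 = \partial_t f - \partial_t D\, u \in \Ha{m-1}$, boundary value $\partial_t g \in \E{m-1}$, and initial value $u_1 := S_{m,1}(0, A_0, \ldots, A_3, D, f, u_0) \in \Hh{m-1}$ (the latter by Lemma~\ref{LemmaEstimatesForHigherOrderInitialValues}, and $v(0) = u_1$ by~\eqref{EquationTimeDerivativesOfSolution}). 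Since $A_0$ and $D$ are smooth and constant outside a compact set, so is $\partial_t A_0$, and it vanishes there; hence $\partial_t A_0 + D \in \Fuwl{\tilde{m}}{cp} \cap C^\infty(\overline{\Omega})$, so the $L_{\partial_t}$-problem carries coefficients of exactly the type required by the proposition at order $m - 1$. Lemma~\ref{LemmaHigherOrderCompatibilityConditions} then yields
\[
 S_{m-1,p}(0, A_0, \ldots, A_3, \partial_t A_0 + D, f_1, u_1) = S_{m,p+1}(0, A_0, \ldots, A_3, D, f, u_0) \quad \text{for } 0 \leq p \leq m - 2,
\]
so the order-$m$ compatibility of $(f, g, u_0)$ translates precisely into the order-$(m-1)$ compatibility of $(f_1, \partial_t g, u_1)$ for $L_{\partial_t}$. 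The inductive hypothesis therefore applies to the $L_{\partial_t}$-problem and gives $v = \partial_t u \in \G{m-1}$, that is, $u \in \bigcap_{j=1}^m C^j(\clJ, \Hh{m-j})$.

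It then remains to upgrade this to $u \in C(\clJ, \Hh{m})$. With the time regularity just obtained, together with $\Ha{m} \subseteq \Hata{m}$ and $\Hh{m} \subseteq \Hhta{m}$, Lemma~\ref{LemmaRegularityInSpaceTangential} gives $\partial^\alpha u \in C(\clJ, \Ltwoh)$ for every $\alpha \in \N_0^4$ with $|\alpha| = m$ and $\alpha_0 = \alpha_3 = 0$. The mixed derivatives $\partial^\alpha u$ with $\alpha_0 = p \geq 1$ also lie in $C(\clJ, \Ltwoh)$, being spatial derivatives of order $m - p$ of $\partial_t^p u \in C(\clJ, \Hh{m-p})$. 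Feeding all of these into Lemma~\ref{LemmaRegularityInNormalDirection}, applied successively for $k = 1, 2, \ldots, m$, one obtains $\partial^\alpha u \in C(\clJ, \Ltwoh)$ for all $\alpha$ with $|\alpha| = m$ and $\alpha_0 = 0$. Combined with $u \in \G{m-1}$ this yields $u \in C(\clJ, \Hh{m})$, hence $u \in \G{m}$, closing the induction.

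The main obstacle I anticipate is the time direction. Since the a priori estimates tolerate no mollifier in time, time regularity cannot be extracted by direct smoothing and must instead be produced through the fixed-point construction of Lemma~\ref{LemmaBasicRegularityInTime} (for $m = 1$) and the inductive passage to the $L_{\partial_t}$-problem (for $m \geq 2$). This is where the care lies: one has to keep $\partial_t A_0 + D$ inside the coefficient class $\Fuwl{\tilde{m}}{cp}$---which is precisely what the $C^\infty$-hypothesis on the coefficients buys---and transport the compatibility conditions exactly through Lemma~\ref{LemmaHigherOrderCompatibilityConditions}. Once the full time regularity is in hand, the tangential and normal regularity are routine consequences of the remaining two lemmas.
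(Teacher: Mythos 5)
Your argument is correct and follows the same route as the paper: induction on $m$, with the base case $m=1$ established by Lemma~\ref{LemmaBasicRegularityInTime} (its implication hypothesis verified through Lemma~\ref{LemmaRegularityInSpaceTangential} and Lemma~\ref{LemmaRegularityInNormalDirection}), and the inductive step carried out by applying the hypothesis to the $L_{\partial_t}$-problem solved by $\partial_t u$, transporting the compatibility conditions via Lemma~\ref{LemmaHigherOrderCompatibilityConditions}, and then recovering the spatial regularity at the top order from Lemma~\ref{LemmaRegularityInSpaceTangential} followed by successive applications of Lemma~\ref{LemmaRegularityInNormalDirection}. The only cosmetic difference is that the paper phrases the induction as $m \to m+1$ while you phrase it as $m-1 \to m$; the content, including the key observation that $C^\infty$-smoothness plus constancy outside a compact set keeps $\partial_t A_0 + D$ inside $\Fuwl{\tilde{m}}{cp} \cap C^\infty(\overline{\Omega})$, is the same.
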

\begin{proof}
 The assertion is true for $m = 1$ by Lemma~\ref{LemmaBasicRegularityInTime}, Lemma~\ref{LemmaRegularityInSpaceTangential}, 
 and Lemma~\ref{LemmaRegularityInNormalDirection}. Now assume that we have shown 
 the assertion for  a number $m \in \N$. Let all the assumptions be fulfilled for $m+1$. By the induction hypothesis,
 the weak solution $u$ of~\eqref{IBVP} belongs to $\G{m}$. Moreover, $\partial_t u$ 
 solves the initial boundary value problem
 \begin{equation*}
 \left\{\begin{aligned}
    L_{\partial_t} v   &= \partial_{t} f - \partial_t D u, \quad &&x \in \R^3_+, \quad &&t \in J; \\
    B v &= \partial_t g, \quad &&x \in \partial \R^3_+, &&t \in J; \\
    v(0) &= S_{m+1,1}(0, A_0, \ldots, A_3, D, f, u_0),  &&x \in \R^3_+,
  \end{aligned} \right.
\end{equation*}
 where we again write $L_{\partial_t}$ for $L(A_0, \ldots, A_3, \partial_t A_0 + D)$.
 Using the abbreviations $u_1$ for $S_{m+1,1}(0, A_0,\ldots, A_3, D, f, u_0)$ and $f_1$ for $\partial_t f - \partial_t D u$ once more,
 we deduce that $u_1$ is contained in $\Hh{m}$ by Lemma~\ref{LemmaEstimatesForHigherOrderInitialValues}, that $\partial_t g$ belongs
 to $\E{m}$, and that $f_1$ is an element of $\Ha{m}$ by  Lemma~\ref{LemmaRegularityForA0}~\ref{ItemProductHkappa} 
 since $\partial_t D \in \G{\max\{m,2\}}$ and 
 $u \in \G{m}$.
 Lemma~\ref{LemmaHigherOrderCompatibilityConditions} further shows that $(0, A_0, \ldots, A_3, \partial_t A_0 + D, f_1, \partial_t g, u_1)$
 fulfills the compatibility conditions~\eqref{EquationCompatibilityConditionPrecised}
 of order $m$.
 Finally, we have $A_0 \in \Fupdwl{\tilde{m}}{cp}{\eta} \cap C^\infty(\overline{\Omega})$ with $\partial_t A_0 \in \Fuwl{\tilde{m}}{cp}$ and 
 $\partial_t A_0 + D \in \Fuwl{\tilde{m}}{cp} \cap C^\infty(\overline{\Omega})$ so that the induction hypothesis yields
 that  $\partial_t u$ is an element of $\G{m}$, implying that $u$ is an element of
 $\bigcap_{j=1}^{m+1} C^j(\clJ, \Hh{m+1-j})$. 
 By Lemma~\ref{LemmaRegularityInSpaceTangential} and Lemma~\ref{LemmaRegularityInNormalDirection}, 
 the solution $u$ then belongs to $\G{m+1}$.
\end{proof}

It remains to remove the assumption of smooth coefficients. We therefore want to approximate the coefficients 
from $\F{m}$ by smooth ones. However, approximating the coefficients will violate the compatibility conditions 
in general. We overcome this difficulty by not only approximating the coefficients but also the 
initial value in such a way, that the tuple consisting of the approximating coefficients and data 
still satisfies the compatibility conditions up to order $m$.
\begin{lem}
 \label{LemmaExistenceOfApproximatingSequence}
 Let $\eta > 0$, $m \in \N$, and $\tilde{m} = \max\{m,3\}$. Take coefficients $A_0 \in \Fupdwl{\tilde{m}}{cp}{\eta}$, $A_1, A_2 \in \Fcoeff{\tilde{m}}{cp}$,
 $A_3 = A_3^{\operatorname{co}}$, $ D \in \Fuwl{\tilde{m}}{cp}$, and $B = B^{\operatorname{co}}$ and data $f \in \Ha{m}$, $g \in \E{m}$,
 and $u_0 \in \Hh{m}$ which fulfill the
 compatibility conditions~\eqref{EquationCompatibilityConditionPrecised} of order $m$ in $t_0 \in \clJ$, i.e.,
 \begin{align*}
  \Tr B S_{m,l}(t_0, A_0,\ldots, A_3, D, f, u_0) = \partial_t^l g(t_0) \quad \text{for } 0 \leq l \leq m - 1.
 \end{align*}
 Let $\{A_{i,\epsilon}\}_{\epsilon > 0}$ and $\{D_{\epsilon}\}_{\epsilon > 0}$ be the families of functions 
 provided by Lemma~\ref{LemmaApproximationOfCoefficients} for $A_i$ and $D$ respectively for $i \in \{0,1,2\}$.
 Then there exists a number $\epsilon_0 > 0$ and a family  $\{u_{0,\epsilon}\}_{0 < \epsilon <\epsilon_0}$ in $\Hh{m}$ such that the 
 compatibility conditions for $(t_0, A_{0,\epsilon}, A_{1,\epsilon}, A_{2,\epsilon}, A_3, D_\epsilon, B, f,g, u_{0,\epsilon})$ of order $m$ 
 hold; i.e.,
 \begin{align*}
  \Tr B S_{m,l}(t_0, A_{0,\epsilon},A_{1,\epsilon}, A_{2,\epsilon}, A_3, D_\epsilon, f, u_{0,\epsilon}) = \partial_t^l g(t_0) \quad \text{for } 0 \leq l \leq m - 1,
 \end{align*}
  and
 $u_{0,\epsilon} \rightarrow u_0$ in $\Hh{m}$ as $\epsilon \rightarrow 0$.
\end{lem}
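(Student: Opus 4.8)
The plan is to take $u_{0,\epsilon} := u_0 + v_\epsilon$ with $v_\epsilon \in \Hh{m}$ a small perturbation whose boundary traces $\partial_3^l v_\epsilon|_{x_3=0}$, $0 \le l \le m-1$, are prescribed so that the tuple $(t_0, A_{0,\epsilon}, A_{1,\epsilon}, A_{2,\epsilon}, A_3^{\operatorname{co}}, D_\epsilon, B^{\operatorname{co}}, f, g, u_{0,\epsilon})$ satisfies the compatibility conditions~\eqref{EquationCompatibilityConditionPrecised} of order $m$, and then to prove $v_\epsilon \to 0$ in $\Hh{m}$. Two structural observations drive this. Write $S_{m,l}^\epsilon(w) := S_{m,l}(t_0, A_{0,\epsilon}, A_{1,\epsilon}, A_{2,\epsilon}, A_3^{\operatorname{co}}, D_\epsilon, f, w)$. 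First, the recursion~\eqref{EquationDefinitionSmp} together with $A_3 = A_3^{\operatorname{co}}$ shows that $w \mapsto \Tr B^{\operatorname{co}} S_{m,l}^\epsilon(w)$ is affine, that it depends on $w$ only through the traces $\partial_3^{l'} w|_{x_3=0}$ with $l' \le l$ and their tangential derivatives, and that its linear part in the top-order trace $\partial_3^l w|_{x_3=0}$ is multiplication by the boundary matrix field $M_l^\epsilon := B^{\operatorname{co}} (-1)^l \bigl(A_{0,\epsilon}(t_0)^{-1} A_3^{\operatorname{co}}\bigr)^l$. Second, using $B^{\operatorname{co}} = M^{\operatorname{co}} A_3^{\operatorname{co}}$ (so that $A_3^{\operatorname{co}}$ is injective on $\image\bigl((M^{\operatorname{co}})^T\bigr)$ because $\operatorname{rank} B^{\operatorname{co}} = 2$), the symmetry of $A_0$ and $A_3^{\operatorname{co}}$, and $A_{0,\epsilon}(t_0) \ge \eta$, a short symmetry argument (if $A_{0,\epsilon}(t_0) u = A_3^{\operatorname{co}} w$ and $u \in \ker A_3^{\operatorname{co}}$, then $\langle A_{0,\epsilon}(t_0) u, u\rangle = \langle w, A_3^{\operatorname{co}} u\rangle = 0$, hence $u = 0$), applied inductively, shows that $M_l^\epsilon$ is at each boundary point a surjection $\R^6 \to \R^2$ with a right inverse bounded in terms of $\eta$ and a uniform $L^\infty$-bound for $A_{0,\epsilon}(t_0)$; thus multiplication by $M_l^\epsilon$ and by its right inverse is bounded on $H^{m-l-1/2}(\partial \R^3_+)$, uniformly in $\epsilon$, by the product estimates of Lemma~\ref{LemmaRegularityForA0}.

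With these facts I would determine the traces $\delta_l := \partial_3^l v_\epsilon|_{x_3=0} \in H^{m-l-1/2}(\partial \R^3_+)$ inductively in $l$. Since $m \ge 1$ the condition of order $0$ reads $\Tr B^{\operatorname{co}} u_0 = g(t_0)$, holds by hypothesis, and does not involve the coefficients, so I set $\delta_0 := 0$. Assume $\delta_0, \dots, \delta_{l-1}$ have been chosen so that the approximated conditions of orders $0, \dots, l-1$ hold for every $v_\epsilon$ realizing these traces, and let $\hat v_\epsilon \in \Hh{m}$ be any lift of $(\delta_0, \dots, \delta_{l-1}, 0)$. By the first observation the condition of order $l$, i.e. $\Tr B^{\operatorname{co}} S_{m,l}^\epsilon(u_0 + v_\epsilon) = \partial_t^l g(t_0)$ with $\partial_3^l v_\epsilon|_{x_3=0} = \delta_l$, is equivalent to $M_l^\epsilon \delta_l = R_l^\epsilon$, where $R_l^\epsilon := \partial_t^l g(t_0) - \Tr B^{\operatorname{co}} S_{m,l}^\epsilon(u_0 + \hat v_\epsilon)$ is independent of the chosen lift and belongs to $H^{m-l-1/2}(\partial \R^3_+; \R^2)$ (here $S_{m,l}^\epsilon(\cdot) \in \Hh{m-l}$ with the attendant bounds by Lemma~\ref{LemmaEstimatesForHigherOrderInitialValues}). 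I set $\delta_l := (M_l^\epsilon)^{\dagger} R_l^\epsilon$ using the uniformly bounded right inverse, which closes the induction. Finally I let $v_\epsilon \in \Hh{m}$ be a lift of $(\delta_0, \dots, \delta_{m-1})$ provided by the trace theorem for $\Hh{m}$, so that $\|v_\epsilon\|_{\Hh{m}} \le C \sum_{l=0}^{m-1} \|\delta_l\|_{H^{m-l-1/2}(\partial \R^3_+)}$, and put $u_{0,\epsilon} := u_0 + v_\epsilon$. By construction the approximated tuple satisfies the compatibility conditions of order $m$.

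It remains to show $u_{0,\epsilon} \to u_0$ in $\Hh{m}$, for which it suffices that $\delta_l \to 0$ for each $l$; I would prove this by induction on $l$, the case $\delta_0 = 0$ being trivial. Assuming $\delta_{l'} \to 0$ in $H^{m-l'-1/2}(\partial \R^3_+)$ for $l' < l$, one may choose the partial lifts $\hat v_\epsilon$ of $(\delta_0, \dots, \delta_{l-1}, 0)$ so that $\hat v_\epsilon \to 0$ in $\Hh{m}$. As $\epsilon \to 0$, Lemma~\ref{LemmaApproximationOfCoefficients} gives $A_{i,\epsilon}(t_0) \to A_i(t_0)$ in $L^\infty(\R^3_+)$ and $\partial_t^j A_{0,\epsilon}(t_0) \to \partial_t^j A_0(t_0)$, $\partial_t^j D_\epsilon(t_0) \to \partial_t^j D(t_0)$ in the relevant Sobolev norms, while $f$, $g$, $u_0$ are fixed; by the product estimates of Lemma~\ref{LemmaRegularityForA0} and the formula~\eqref{EquationDefinitionSmp} this yields $S_{m,l}^\epsilon(u_0 + \hat v_\epsilon) \to S_{m,l}(t_0, A_0, A_1, A_2, A_3^{\operatorname{co}}, D, f, u_0)$ in $\Hh{m-l}$, hence $\Tr B^{\operatorname{co}} S_{m,l}^\epsilon(u_0 + \hat v_\epsilon) \to \Tr B^{\operatorname{co}} S_{m,l}(t_0, A_0, \dots, A_3^{\operatorname{co}}, D, f, u_0) = \partial_t^l g(t_0)$ in $H^{m-l-1/2}(\partial \R^3_+)$, the last equality being the original compatibility condition of order $l$. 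Therefore $R_l^\epsilon \to 0$ and $\delta_l = (M_l^\epsilon)^{\dagger} R_l^\epsilon \to 0$, which completes both inductions and the proof.

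The step I expect to be the crux is the structural statement that each $M_l^\epsilon$ is pointwise surjective onto $\R^2$ with a right inverse bounded uniformly in $\epsilon$ and in the boundary point; this is exactly where the geometry of the perfectly conducting boundary condition enters --- through $B^{\operatorname{co}} = M^{\operatorname{co}} A_3^{\operatorname{co}}$, the symmetry and signature of $A_3^{\operatorname{co}}$, and the positivity of $A_{0,\epsilon}(t_0)$ --- although it reduces to the short symmetry argument above. The remaining technical burden is the bookkeeping: isolating in the order-$l$ condition the part linear in the single new unknown $\delta_l$, checking that the remainder $R_l^\epsilon$ is independent of the auxiliary lift $\hat v_\epsilon$, and controlling its dependence on $\epsilon$ and on the previously constructed traces via Lemmas~\ref{LemmaRegularityForA0}, \ref{LemmaEstimatesForHigherOrderInitialValues}, and~\ref{LemmaApproximationOfCoefficients}.
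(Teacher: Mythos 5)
Your proposal is correct and follows the same overall strategy as the paper: perturb $u_0$ by a function $v_\epsilon \to 0$ in $\Hh{m}$ whose normal-derivative traces are constructed inductively from the leading term $(-A_0(t_0)^{-1}A_3)^p\partial_3^p$ in the representation~\eqref{EquationClaimedFormOfCC} of $S_{m,p}$, using positive definiteness of $A_0$ to invert that leading term, and finally lifting the prescribed traces via the trace theorem. The two places where you diverge technically are worth noting. First, the paper never works directly with boundary traces until the very end of the argument: it constructs interior correctors $a_p^\epsilon\in\Hh{m-p}^6$ solving $A_3\bigl((-A_{0,\epsilon}(0)^{-1}A_3)^p a_p^\epsilon\bigr)=\text{r.h.s.}$ and only then defines $b_p^\epsilon := a_p^\epsilon(\cdot,0)$. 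This keeps all multiplier estimates in the integer-order interior spaces covered by Lemma~\ref{LemmaRegularityForA0}. You instead solve $M_l^\epsilon\delta_l=R_l^\epsilon$ directly on $\partial\R^3_+$, which requires uniform multiplier bounds for $M_l^\epsilon$ and its right inverse on the fractional spaces $H^{m-l-1/2}(\partial\R^3_+)$; these are not directly provided by Lemma~\ref{LemmaRegularityForA0} and would need a trace-space variant, so this step needs more justification than you give. Second, where you establish pointwise surjectivity of $M_l^\epsilon$ via the abstract argument (that the kernel of $A_3(A_{0,\epsilon}^{-1}A_3)^l$ equals $\ker A_3$ by the symmetry/positivity pairing, then rank-nullity), the paper proves the stronger fact $A_3(A_{0,\epsilon}(0)^{-1}A_3)^p$ is onto $\image A_3$ with controlled inverse by an explicit block computation (Lemma~\ref{LemmaConstructionForCC}, via the permutation $Q$ and the positive-definite $2\times2$ block $\Theta_\epsilon$). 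Your argument is shorter and cleaner for showing surjectivity at a point, but the explicit construction gives the uniform-in-$\epsilon$ quantitative bound (in terms of $\eta,r$) more transparently, which is what actually matters for passing to the limit. Both routes are sound once the fractional-space estimates are filled in; the paper's interior-then-trace order of operations simply avoids having to do that.
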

\begin{proof}
 Without loss of generality we assume $t_0 = 0$. Note that $A_{1,\epsilon}$ and $A_{2,\epsilon}$ are 
 still time independent for all $\epsilon > 0$. We set $u_{0,\epsilon} = u_0 + h_\epsilon$ and look for $h_\epsilon \in \Hh{m}$ with $h_\epsilon \rightarrow 0$ in 
 $\Hh{m}$  such that the compatibility conditions are fulfilled. Since 
 $B = M A_3$ for a constant matrix $M = M^{\operatorname{co}}$, it is 
 sufficient for that purpose to find $h_\epsilon$ with
 \begin{align*}
  A_3 S_{m,p}(0, A_{0,\epsilon}, A_{1,\epsilon}, A_{2,\epsilon}, A_3, D_\epsilon, f, u_0 + h_\epsilon)  = A_3 S_{m,p}(0, A_0,\ldots, A_3, D, f, u_0)
 \end{align*}
 for all $0 \leq p \leq m-1$ on $\partial \R^3_+$. To simplify the notation, we will drop the dependancy of 
 the operators on $0$, $A_3$ and $f$ in the following since they remain fixed throughout the proof.
 
 I) The definition of the operators $S_{m,k}$ was given inductively. In principle, it is possible 
 to derive an explicit representation of $S_{m,k}$. However, we are satisfied with the representation
 \begin{align}
  \label{EquationClaimedFormOfCC}
  S_{m,p}(A_0, A_1, A_2, D,u_0) &= (-A_{0}(0)^{-1}A_3)^p \partial_3^p u_0 + \sum_{j=0}^{p-1} C_{p,p-j}(A_0,A_1,A_2, D) \partial_3^j u_0 \nonumber\\
		      &\qquad + B_p(A_0, A_1, A_2, D) f ,
 \end{align}
 where 
 $C_{p, p-j}$ is 
 a differential operator which only involves tangential derivatives up to order $p-j$ 
 and which maps $\Hh{m-j}$ continuously into $\Hh{m-p}$ with
 \begin{equation*}
  \|C_{p,p-j}(A_{0,\epsilon}, A_{1,\epsilon}, A_{2,\epsilon}, D_{\epsilon})\|_{\Hh{m-j} \rightarrow \Hh{m-p}} \leq C
 \end{equation*}
 for all $\epsilon \geq 0$ and $j \in \{0, \ldots, p-1\}$. Here $A_{i,0}$ means $A_i$ for $i \in \{0,1,2\}$ 
 and accordingly for $D$.
 Similarly, $B_p$ is a differential operator of order $p - 1$ which maps $\Ha{m}$ 
 continuously into $\Hh{m-p}$.
 
 For the proof of this claim one proceeds by induction with respect to $p$, inserting the 
 representation~\eqref{EquationClaimedFormOfCC} for the lower order terms into the 
 definition of $S_{m,p}$. A very careful analysis of the regularity of 
 the arising coefficients then yields the mapping properties of $C_{p,p-j}$ and $B_p$. 
 Similarly, an induction shows that $S_{m,p}(A_{0,\epsilon}, A_{1,\epsilon}, A_{2,\epsilon}, D_{\epsilon},u_0)$ 
 converges to $S_{m,p}(A_0, A_1, A_2, D,u_0)$ in $\Hh{m-p}$ as $\epsilon \rightarrow 0$ for 
 $0 \leq p \leq m-1$.
 The details can be found in~\cite[Lemma~4.10]{SpitzDissertation}.

 II) Let $h \in \Hh{m}$. By means of~\eqref{EquationClaimedFormOfCC}, we have
 \begin{align}
 \label{EquationConstructionOfCCFormalComputation}
  &S_{m,p}( A_{0,\epsilon}, A_{1,\epsilon}, A_{2,\epsilon},  D_\epsilon, u_0 + h) 
  =  S_{m,p}( A_{0,\epsilon}, A_{1,\epsilon}, A_{2,\epsilon}, D_\epsilon,u_0)  \nonumber\\
		      &\hspace{2em} + (-A_{0,\epsilon}(0)^{-1}A_3)^p \partial_3^p h + \sum_{j=0}^{p-1} C_{p,p-j}(A_{0,\epsilon}, A_{1,\epsilon}, A_{2,\epsilon}, D_\epsilon) \partial_3^j  h. 
 \end{align}
 Set $a_0^\epsilon = 0$. Then $a_0^\epsilon \in \Hh{m}^6$ and 
 \begin{align*}
  S_{m,0}(A_0,A_1,A_2, D,u_0) - S_{m,0}(A_{0,\epsilon},A_{1,\epsilon}, A_{2,\epsilon}, D_\epsilon,u_0) = u_0 - u_0 = 0 = a_0^\epsilon.
 \end{align*} 
 Let $k \in \{0, \ldots, m-2\}$. Assume that we have constructed families of functions $a_p^\epsilon \in \Hh{m-p}^6$ such that 
 \begin{align}
 \label{EquationConstructionOfCCInductionStep}
  &A_3 \,\big((-A_{0,\epsilon}(0)^{-1}A_3)^p  a_p^\epsilon\big) \\
    &= A_3 \Big(S_{m,p}(A_0,A_1, A_2, D,u_0) - S_{m,p}(A_{0,\epsilon}, A_{1,\epsilon}, A_{2,\epsilon}, D_\epsilon, u_0) \Big) \nonumber \\
      &\quad - A_3 \Big(\sum_{j=0}^{p-1} C_{p,p-j}(0, A_{0,\epsilon}, A_{1,\epsilon}, A_{2,\epsilon}, D_\epsilon)   a_j^\epsilon\Big) , \nonumber\\
     &a_p^\epsilon \longrightarrow 0 \quad \text{in } \Hh{m-p}^6 \text{ as } \epsilon \rightarrow 0 \nonumber
 \end{align}
  for every $p \in \{0, \ldots, k\}$.
  Then the functions
  \begin{align*}
   &\sum_{j=0}^{k} C_{k+1,k+1-j}(A_{0,\epsilon}, A_{1,\epsilon}, A_{2,\epsilon}, D_\epsilon)  a_j^\epsilon, \\
   &S_{m,k+1}(A_0,A_1, A_2, D,u_0) - S_{m,k+1}(A_{0,\epsilon},A_{1,\epsilon}, A_{2,\epsilon}, D_\epsilon,u_0)
  \end{align*}
  belong to $\Hh{m-k-1}$ and converge to zero in this space by step I)
  as $\epsilon \rightarrow 0$.  
  Lemma~\ref{LemmaConstructionForCC} below thus gives a number $\epsilon_0 > 0$ and 
  functions $a_{k+1}^\epsilon \in \Hh{m-k-1}^6$ 
  such that
  \begin{align*}
   &A_3\,\big((-A_{0,\epsilon}(0)^{-1}A_3)^{k+1}  a_{k+1}^\epsilon\big) \\
   &= A_3 \Big(S_{m,k+1}(A_0,A_1, A_2, D,u_0) - S_{m,k+1}(A_{0,\epsilon}, A_{1,\epsilon}, A_{2,\epsilon}, D_\epsilon, u_0) \Big) \\
      &\quad - A_3  \Big(\sum_{j=0}^{k} C_{k+1,k+1-j}(A_{0,\epsilon}, A_{1,\epsilon}, A_{2,\epsilon}, D_\epsilon)  a_j^\epsilon \Big) , \\
   &a_{k+1}^\epsilon \longrightarrow 0 \quad \text{in } \Hh{m-k-1}^6 \text{ as } \epsilon \rightarrow 0.
  \end{align*}
  for all $\epsilon \in (0, \epsilon_0)$. The induction is thus finished.
  We next define
  \begin{align*}
   b_p^\epsilon := a_p^\epsilon(\cdot, 0) \in H^{m-p-\frac{1}{2}}(\partial \R^3_+)
  \end{align*}
  for $0 \leq p \leq m-1$. Since the trace operator from $\Hh{m-p}$ into $H^{m-p-\frac{1}{2}}(\partial \R^3_+)$ 
  is continuous, we infer that $b_p^\epsilon \rightarrow 0$ in $H^{m-p-\frac{1}{2}}(\partial \R^3_+)$ as 
  $\epsilon \rightarrow 0$. Theorems~2.5.7 and~2.5.6 in~\cite{Hoermander} now yield functions $h_\epsilon \in \Hh{m}$ 
  with 
  \begin{align*}
   \partial_3^p h_\epsilon(\cdot, 0) = b_p^\epsilon \quad \text{on } \partial \R^3_+
  \end{align*}
  for $0 \leq p \leq m-1$ and $\epsilon \in (0, \epsilon_0)$, which satisfy $h_\epsilon \rightarrow 0$ in $\Hh{m}$ as $\epsilon \rightarrow 0$. 
  
  We set $u_{0,\epsilon} = u_0 + h_\epsilon$ for all $\epsilon > 0$. Then $u_{0,\epsilon}$ tends to $u_0$ in $\Hh{m}$ and
  by construction we have
  \begin{align*}
   &\tr \big( A_3 S_{m,p}(A_{0,\epsilon},A_{1,\epsilon}, A_{2,\epsilon}, D_\epsilon, u_{0,\epsilon})\big)= \tr \big(A_3 S_{m,p}(A_0,A_1,A_2,D,u_0)\big)
  \end{align*}
  for $0 \leq p \leq m-1$. Since $(0,A_0,\ldots, A_3, D,B,f,g,u_0)$ fulfills the compatibility conditions~\eqref{EquationCompatibilityConditionPrecised}
  of order $m$ and $B = M A_3 $, we conclude the assertion.
\end{proof}

In the proof of the previous result we exploited that we can continuously
invert $(-A_{0,\epsilon}(0)^{-1} A_3)^p$ on the range of $A_3$ in a certain sense. We provide the proof 
of this statement in the next lemma.
\begin{lem}
 \label{LemmaConstructionForCC} 
 Let $\eta > 0$ and $m \in \N$ with $m \geq 3$. Take $A_0 \in \Fpdk{m}{\eta}{6}$ and $A_3 = A_3^{\operatorname{co}}$.
 Pick $k\in \N$ with $k \leq m-1$ and $p \in \N_0$. Choose $r > 0$ such that $\Fvarnorm{m-1}{A_0(0)}\leq r$.
 Take an approximating family $\{A_{0,\epsilon}\}_{\epsilon> 0}$  provided 
 by Lemma~\ref{LemmaApproximationOfCoefficients}.
 Let $\{v_{0,\epsilon}\}_{\epsilon > 0}$ be a family of functions in $\Hh{k}^6$. Then there exists a number 
 $\epsilon_0 >0$ and a family of functions $\{v_{p,\epsilon}\}_{0 < \epsilon < \epsilon_0}$ in $\Hh{k}^6$ 
 such that
  \begin{align*}
  A_3 (A_{0,\epsilon}(0)^{-1} A_{3})^p v_{p,\epsilon} = A_3 v_{0,\epsilon}
 \end{align*}
 for all $\epsilon \in (0, \epsilon_0)$
 and a constant $C = C(\eta,r)$ such that
 \begin{align*}
    \Hhn{k}{v_{p,\epsilon}} \leq C \Hhn{k}{v_{0,\epsilon}}
 \end{align*}
 for all $\epsilon \in (0, \epsilon_0)$.
\end{lem}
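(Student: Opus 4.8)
The plan is to solve the equation explicitly and pointwise, exploiting the kernel/image structure of $A_3 = A_3^{\operatorname{co}}$. For $p = 0$ the relation to be established is satisfied by $v_{0,\epsilon}$ itself, so I assume $p \ge 1$ and abbreviate $M_\epsilon := A_{0,\epsilon}(0)$. Using the elementary identity $A_3 (M_\epsilon^{-1} A_3)^p = (A_3 M_\epsilon^{-1})^p A_3$, the equation $A_3(M_\epsilon^{-1}A_3)^p v_{p,\epsilon} = A_3 v_{0,\epsilon}$ becomes $R_\epsilon^p (A_3 v_{p,\epsilon}) = A_3 v_{0,\epsilon}$ with $R_\epsilon := A_3 M_\epsilon^{-1}$. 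Recall that $A_3^{\operatorname{co}}$ is symmetric with $\kernel A_3^{\operatorname{co}} = \sp\{e_3,e_6\}$, hence $W := \image A_3^{\operatorname{co}} = \sp\{e_1,e_2,e_4,e_5\} = (\kernel A_3^{\operatorname{co}})^\perp$, and $A_3^{\operatorname{co}}|_W : W \to W$ is a fixed linear isomorphism.

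The key point I would establish first is a purely linear-algebraic claim: at every $x \in \R^3_+$ the matrix $R_\epsilon$ maps $\R^6$ onto $W$ and $R_\epsilon|_W : W \to W$ is invertible, with $\|(R_\epsilon|_W)^{-1}\|$ bounded by a constant depending only on $\eta$ and $r$, uniformly in $x$ and in all sufficiently small $\epsilon$. Indeed $\image R_\epsilon = A_3(\R^6) = W$ is immediate; and if $w \in W$ with $R_\epsilon w = 0$, then $M_\epsilon^{-1} w \in \kernel A_3 = W^\perp$, so $0 = \langle M_\epsilon^{-1} w, w\rangle \ge \|M_\epsilon\|^{-1}|w|^2$, forcing $w = 0$. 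The same computation, together with the fixed lower bound for $A_3|_W$, yields the uniform bound on $(R_\epsilon|_W)^{-1}$, where I use that $\eta \le M_\epsilon(x) \le 2r$ for $\epsilon$ below some $\epsilon_0$: this holds by Lemma~\ref{LemmaApproximationOfCoefficients}, since $A_{0,\epsilon}(0) \ge \eta$ for all $\epsilon$ and $A_{0,\epsilon}(0) \to A_0(0)$ in $L^\infty(\R^3_+)$ with $\|A_0(0)\|_{L^\infty(\R^3_+)} \le r$; shrinking $\epsilon_0$ further we also get $\Fvarnorm{m-1}{M_\epsilon} \le 2r$ from the $L^2$-convergence of $\partial^\alpha A_{0,\epsilon}(0)$.

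Granting the claim, I would set
\begin{equation*}
 v_{p,\epsilon} := (A_3|_W)^{-1} (R_\epsilon|_W)^{-p}\, A_3 v_{0,\epsilon},
\end{equation*}
where $(A_3|_W)^{-1}$ and $(R_\epsilon|_W)^{-1}$ denote the $6\times6$ matrices inverting $A_3$, resp.\ $R_\epsilon$, on $W$ and vanishing on $W^\perp$. Since $A_3 v_{0,\epsilon} \in W$ and $A_3(A_3|_W)^{-1}$ is the identity on $W$, one gets $A_3 v_{p,\epsilon} = (R_\epsilon|_W)^{-p} A_3 v_{0,\epsilon}$, and since $R_\epsilon$ preserves $W$ we have $R_\epsilon^p|_W = (R_\epsilon|_W)^p$, so $R_\epsilon^p(A_3 v_{p,\epsilon}) = A_3 v_{0,\epsilon}$, which is the required identity. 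For the estimate: by the bilinear estimates in Lemma~\ref{LemmaRegularityForA0} and the standard fact that inverting a matrix bounded below preserves $\Fvar{m-1}$ (cf.\ \cite{SpitzDissertation}), $M_\epsilon^{-1}$ and hence $R_\epsilon = A_3 M_\epsilon^{-1}$ lie in $\Fvarwk{m-1}{6}$ with norm $\le C(\eta,r)$; the entries of $(R_\epsilon|_W)^{-1}$ are rational expressions in the entries of $R_\epsilon$ with denominator bounded below by $c(\eta,r) > 0$ (by the claim), so $(R_\epsilon|_W)^{-1}$, and then its $p$-th power, again lie in $\Fvarwk{m-1}{6}$ with norm $\le C(\eta,r)$ ($m$ and $p$ being fixed). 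Multiplying by the constant matrices $A_3$ and $(A_3|_W)^{-1}$, the $6\times6$ matrix function $N_\epsilon$ with $v_{p,\epsilon} = N_\epsilon v_{0,\epsilon}$ belongs to $\Fvarwk{m-1}{6}$ with $\Fvarnorm{m-1}{N_\epsilon} \le C(\eta,r)$. As $m \ge 3$ and $k \le m-1$, Lemma~\ref{LemmaRegularityForA0}~\ref{ItemProductIn0} in its $\Fvar{m-1}$-version yields $v_{p,\epsilon} \in \Hh{k}^6$ with $\Hhn{k}{v_{p,\epsilon}} \le C\,\Fvarnorm{m-1}{N_\epsilon}\,\Hhn{k}{v_{0,\epsilon}} \le C(\eta,r)\,\Hhn{k}{v_{0,\epsilon}}$.

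The main obstacle is the linear-algebraic claim, specifically obtaining the bound on $(R_\epsilon|_W)^{-1}$ uniformly in $x$ and $\epsilon$ — this is exactly where the positive definiteness $A_{0,\epsilon}(0) \ge \eta$, the upper bound coming from $\Fvarnorm{m-1}{A_0(0)} \le r$, and the precise kernel and image of $A_3^{\operatorname{co}}$ all enter. Everything afterwards — the algebraic identity, the formula for $v_{p,\epsilon}$, and the propagation of $\Fvar{m-1}$-regularity through matrix inversion, powers, and multiplication by $v_{0,\epsilon}$ — is routine given Lemma~\ref{LemmaRegularityForA0} and standard arguments.
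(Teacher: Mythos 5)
Your proof is correct, and it takes a genuinely different route from the paper's. The paper first performs a change of basis via an explicit matrix $Q$ chosen so that $A_3 Q$ acts as the identity on $W = \operatorname{im} A_3^{\operatorname{co}}$, then extracts the $2\times 2$ submatrix $\Theta_\epsilon = (A_{0,\epsilon}(0))_{(3,6)\times(3,6)}$ (whose positive definiteness is inherited from $A_{0,\epsilon}(0)$) and uses $\Theta_\epsilon^{-1}$ to build explicit corrections $h_{1,\epsilon}, h_{2,\epsilon}$ supported on $\kernel A_3 = \operatorname{span}\{e_3,e_6\}$ that land $-A_{0,\epsilon}(0)(w_0 + h)$ in $W$; the case $p\geq 2$ is then handled by iterating this single-step construction and tracking norms inductively (their step III). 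You instead conjugate via the identity $A_3(M_\epsilon^{-1}A_3)^p = (A_3 M_\epsilon^{-1})^p A_3$, observe that $R_\epsilon := A_3 M_\epsilon^{-1}$ preserves $W$ and is invertible there with a uniform bound coming directly from the quadratic-form estimate $\langle M_\epsilon^{-1} w, w\rangle \geq \|M_\epsilon\|^{-1}|w|^2$, and write down a closed-form solution $v_{p,\epsilon} = (A_3|_W)^{-1}(R_\epsilon|_W)^{-p}A_3 v_{0,\epsilon}$ whose coefficient matrix $N_\epsilon$ you bound in $\Fvarwk{m-1}{6}$ in one step. Both arguments exploit the same ingredients — symmetry and the kernel/image structure of $A_3^{\operatorname{co}}$, the uniform positive definiteness of $A_{0,\epsilon}(0)$, the approximation properties in Lemma~\ref{LemmaApproximationOfCoefficients}, and the $\Fvar{m-1}$ algebra from Lemma~\ref{LemmaRegularityForA0} — but your restricted-inverse formulation replaces the paper's explicit $Q$- and $\Theta_\epsilon$-constructions and its induction in $p$ by a single, more transparent formula. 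The one place where your argument is slightly compressed, and where a fully written version should supply the details, is the passage from the pointwise bound $|(R_\epsilon|_W)^{-1}| \leq C(\eta,r)$ to the assertion that $(R_\epsilon|_W)^{-1}$ and its powers lie in $\Fvarwk{m-1}{6}$ with uniform norm: this follows by differentiating via $\partial(R^{-1}) = -R^{-1}(\partial R)R^{-1}$ (in the $4\times 4$ representation on $W$) together with the product estimates of Lemma~\ref{LemmaRegularityForA0}, but it is the same kind of routine bookkeeping the paper performs through $\Theta_\epsilon^{-1}$.
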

\begin{proof}
 I) Due to the properties of the approximating family, we find an $\epsilon_0 > 0$ such 
  that
  \begin{align}
  \label{EquationBoundsForApproximatingSequenceCoefficients}
   \Fvarnorm{m-1}{A_{0,\epsilon}(0)} \leq 2 r
  \end{align}
  for all $\epsilon \in (0, \epsilon_0)$. We introduce the invertible matrix
  \begin{equation*}
   Q = \begin{pmatrix}
          0 &0 &0 &0 &1 &0 \\
          0 &0 &0 &-1 &0 &0 \\
          0 &0 &1 &0 &0  &0 \\
          0 &-1 &0 &0 &0  &0 \\
          1 &0 &0 &0 &0  &0 \\
          0 &0 &0 &0 &0  &1
         \end{pmatrix}
         \quad \text{so that} \quad 
    A_3 Q = \begin{pmatrix}
			   1 &0 &0 &0 &0 &0 \\
			   0 &1 &0 &0 &0 &0 \\
			   0 &0 &0 &0 &0  &0 \\
			   0 &0 &0 &1 &0  &0 \\
			   0 &0 &0 &0 &1  &0 \\
			   0 &0 &0 &0 &0  &0
                        \end{pmatrix}.
  \end{equation*}
  We further set
  \begin{align*}
   \Theta_\epsilon = \begin{pmatrix}
       A_{0,\epsilon; 33}  &A_{0,\epsilon;36}  \\
       A_{0,\epsilon;63}   &A_{0,\epsilon;66} 
       \end{pmatrix},
  \end{align*}
  which inherits the positive definiteness from $A_{0,\epsilon}$, i.e., $\Theta_\epsilon \geq \eta$ on $\Omega$.
  In particular, $\Theta_\epsilon$ has an inverse with
  \begin{equation}
   \label{EquationBoundForThetaInverse}
   \Fvarnorm{m-1}{\Theta_\epsilon^{-1}(0)} \leq C(\eta, r)
  \end{equation}
  for all $\epsilon \in (0, \epsilon_0)$.
  
  II) Let $w_0 \in \Hh{k}^6$. Due to the previous step
  we can define scalar functions $h_{1,\epsilon}$ and $h_{2,\epsilon}$ by
  \begin{equation*}
  	(h_{1,\epsilon}, h_{2,\epsilon}) = -\Theta_\epsilon^{-1}(0) (A_{0,\epsilon}(0) w_0)_{(3,6)},
  \end{equation*} 
  where we denote for any vector $\zeta$ from $\R^6$ by $\zeta_{(3,6)}$ the two-dimensional vector $(\zeta_3, \zeta_6)$.
  Note that
  \begin{equation}
  	\label{EquationBoundForh1epsh2eps}
  	\Hhn{k}{(h_{1,\epsilon}, h_{2,\epsilon})} \leq C(\eta,r) \Hhn{k}{w_0}
  \end{equation}
  for all $\epsilon \in (0,\epsilon_0)$ by Lemma~\ref{LemmaRegularityForA0}, \eqref{EquationBoundsForApproximatingSequenceCoefficients}, 
  and~\eqref{EquationBoundForThetaInverse}.
  We next set
  \begin{align}
  	\label{EquationDefinitionOfw1eps}
  	\tilde{w}_{0,\epsilon} &= -A_{0,\epsilon}(0)  \Big( w_0 + h_{1,\epsilon} e_3 + h_{2,\epsilon} e_6 \Big), \nonumber \\
  	\tilde{w}_{1,\epsilon} &= Q \tilde{w}_{0,\epsilon}
  \end{align}
  for all $\epsilon \in (0, \epsilon_0)$. We once more obtain a constant $C(\eta,r)$ such that
  \begin{equation*}
  	\Hhn{k}{\tilde{w}_{1,\epsilon}} \leq C(\eta,r) \Hhn{k}{w_0}
  \end{equation*}
  for all $\epsilon \in (0,\epsilon_0)$ due to Lemma~\ref{LemmaRegularityForA0}, \eqref{EquationBoundsForApproximatingSequenceCoefficients}, 
   and~\eqref{EquationBoundForh1epsh2eps}.
   We further point out that the construction of $h_{1,\epsilon}$, $h_{2,\epsilon}$, and $\tilde{w}_{0,\epsilon}$ yields
   \begin{equation*}
   	(\tilde{w}_{0,\epsilon})_{(3,6)} = (-A_{0,\epsilon}(0)   w_0)_{(3,6)} 
   		- \Theta_\epsilon(0)(h_{1,\epsilon},h_{2,\epsilon}) = 0
   \end{equation*}
   for all $\epsilon \in (0,\epsilon_0)$. In particular,
	\begin{equation*}
		A_3 Q \tilde{w}_{0,\epsilon} = \tilde{w}_{0,\epsilon}
	\end{equation*}
	for all $\epsilon \in (0,\epsilon_0)$. We thus compute
	\begin{align*}
		A_3(-A_{0,\epsilon}(0)^{-1} A_3) \tilde{w}_{1,\epsilon} &=  A_3 (-A_{0,\epsilon}(0)^{-1}) \tilde{w}_{0,\epsilon} = A_3 w_0
	\end{align*}
	for all $\epsilon \in (0,\epsilon_0)$, where we also used that the span of $e_3$ and $e_6$ 
	is the kernel of $A_3$. To sum up, we have shown that for each $w_0 \in \Hh{k}^6$ and $\epsilon \in (0,\epsilon_0)$, there is a function $w_{\epsilon} \in \Hh{k}^6$ such that
	\begin{equation}
		\label{EquationConstructionOfFunctionSummary}
		A_{3}(-A_{0,\epsilon}(0)^{-1} A_3) w_{\epsilon} = A_{3} w_0.
	\end{equation}
	Moreover, there is a constant $C = C(\eta,r)$, in particular independent of  $\epsilon$, such that
	\begin{equation}
		\label{EquationEstimateForConstructedFunctionInSummary}
		\Hhn{k}{w_{\epsilon}} \leq C \Hhn{k}{w_0}
	\end{equation}
	for all $\epsilon \in (0, \epsilon_0)$.
	
	III) To show the actual assertion, we proceed inductively. We claim that for all $p \in \N_0$, $\epsilon \in (0, \epsilon_0)$, 
	and $w \in \Hh{k}^6$ there is a function $w_{p,\epsilon}(w)$ in $\Hh{k}^6$ 
	and a constant $C_p = C_p(\eta,r)$
	such that
	\begin{align}
		\label{EquationInductionClaimForConstructionvdeltaepsilon}
		&A_{3} (-A_{0,\epsilon}(0)^{-1} A_3)^{p} w_{p,\epsilon}(w) = A_{3} w, \\
			\label{EquationInductionClaimForEstimatevdeltaepsilon}
		&\Hhn{k}{w_{p,\epsilon}(w)} \leq C_p \Hhn{k}{w}.
	\end{align}
	Note that there is nothing to show in the case $p = 0$. Now assume that we have proven the claim for a number $p \in \N_0$. Fix $\epsilon \in (0, \epsilon_0)$ and $w \in \Hh{k}^6$. Step II) applied with $w_0 = w$ yields a function 
	$\tilde{w}_{p,  \epsilon} \in \Hh{k}^6$ with
	\begin{align}
		 \label{EquationConstructionFirstStep}
		 A_{3} (-A_{0,\epsilon}(0)^{-1} A_3) \tilde{w}_{p, \epsilon} = A_{3} w
		\hspace{0.5em} \text{and} \hspace{0.5em} \Hhn{k}{ \tilde{w}_{p,  \epsilon}} \leq C(\eta,r) \Hhn{k}{w}.
	\end{align}
	We now define $w_{p+1, \epsilon}(w) = w_{p,\epsilon}(\tilde{w}_{p,  \epsilon})$ for each 
	$\epsilon \in (0, \epsilon_0)$. 
	Then $w_{p+1,  \epsilon}(w)$ is contained in $\Hh{k}^6$ and we compute
	\begin{align*}
		A_{3} (-A_{0,\epsilon}(0)^{-1} A_3)^{p+1} w_{p+1,  \epsilon}(w) &= A_{3}(-A_{0,\epsilon}(0)^{-1}) A_3 (-A_{0,\epsilon}(0)^{-1} A_3)^{p} w_{p, \epsilon}(\tilde{w}_{p,  \epsilon}) \\
		&= A_{3}(-A_{0,\epsilon}(0)^{-1}) A_3 \tilde{w}_{p,  \epsilon} 
		= A_{3} w,
	\end{align*}
	where we employed the induction hypothesis~\eqref{EquationInductionClaimForConstructionvdeltaepsilon} and~\eqref{EquationConstructionFirstStep}.
	Combining~\eqref{EquationInductionClaimForEstimatevdeltaepsilon} with~\eqref{EquationEstimateForConstructedFunctionInSummary}, we further obtain
	\begin{align*}
		\Hhn{k}{w_{p+1,\epsilon}(w)} = \Hhn{k}{w_{p,\epsilon}(\tilde{w}_{p,  \epsilon})} 
		\leq C_p \Hhn{k}{\tilde{w}_{p,  \epsilon}} \leq C \Hhn{k}{w},
	\end{align*}
	where $C = C(\eta,r)$. The claim now follows by induction.
	
	The assertion of the lemma is finally proven by setting $v_{p,\epsilon} = w_{p, \epsilon}(v_{0, \epsilon})$ 
	for all $\epsilon \in (0, \epsilon_0)$ and $p \in \N_0$.
\end{proof}

Applying now Proposition~\ref{PropositionRegularityForApproximatingProblem} to the solutions of 
the approximating initial boundary value problems with coefficients and data from Lemma~\ref{LemmaExistenceOfApproximatingSequence},
we derive the differentiability theorem.
\begin{theorem}
  \label{TheoremRegularityOfSolution}
 Let $\eta > 0$, $m \in \N$, and  $\tilde{m} = \max\{m,3\}$. Take coefficients
 $A_0 \in \Fupdwl{\tilde{m}}{cp}{\eta}$, $A_1, A_2 \in \Fcoeff{\tilde{m}}{cp}$, $A_3 = A_3^{\operatorname{co}}$,
  $D \in \Fuwl{\tilde{m}}{cp}$, and $B = B^{\operatorname{co}}$. Choose data
 $f \in \Ha{m}$, $g \in \E{m}$, and $u_0 \in \Hh{m}$ such that the tuple $(0, A_0, \ldots, A_3, D, B, f,g, u_0)$ 
 satisfies the compatibility conditions~\eqref{EquationCompatibilityConditionPrecised} of order $m$.
 Then the weak solution $u$ of~\eqref{IBVP} belongs to $\G{m}$.
\end{theorem}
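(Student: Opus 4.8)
The plan is to construct $u$ as a limit of solutions of regularized problems with $C^\infty$--coefficients, for which Proposition~\ref{PropositionRegularityForApproximatingProblem} already gives the desired $\G{m}$--regularity, and to transport this regularity to the limit by means of the uniform a priori bound of Theorem~\ref{TheoremAPrioriEstimates}. The delicate point is that one cannot simply mollify the coefficients, since this destroys the compatibility conditions~\eqref{EquationCompatibilityConditionPrecised}; it is precisely Lemma~\ref{LemmaExistenceOfApproximatingSequence} that repairs this, by perturbing the initial value simultaneously.

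Concretely, I would first invoke Lemma~\ref{LemmaApproximationOfCoefficients} to obtain families $\{A_{i,\epsilon}\}_{\epsilon>0}$ (for $i\in\{0,1,2\}$) and $\{D_\epsilon\}_{\epsilon>0}$ of coefficients in $C^\infty(\overline{\Omega})$ which keep the class memberships ($A_{0,\epsilon}\in\Fupdwl{\tilde{m}}{cp}{\eta}$, $A_{1,\epsilon},A_{2,\epsilon}\in\Fcoeff{\tilde{m}}{cp}$, $D_\epsilon\in\Fuwl{\tilde{m}}{cp}$), which are bounded uniformly in $\epsilon$ in $\F{\tilde{m}}$, and whose time traces at $0$ stay bounded (and converge) in $\Fvar{\tilde{m}-1}$ and in the spaces $\Hh{\tilde{m}-1-j}$ appearing in Theorem~\ref{TheoremAPrioriEstimates}. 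Keeping $A_3=A_3^{\operatorname{co}}$, $B=B^{\operatorname{co}}$, $f$ and $g$ fixed, Lemma~\ref{LemmaExistenceOfApproximatingSequence} then yields a family $\{u_{0,\epsilon}\}\subseteq\Hh{m}$ with $u_{0,\epsilon}\to u_0$ in $\Hh{m}$ such that the regularized tuple still satisfies the compatibility conditions of order $m$. Let $u_\epsilon$ be the (unique, by Lemma~\ref{LemmaEllerResult}) weak solution of~\eqref{IBVP} for these coefficients and data. By Proposition~\ref{PropositionRegularityForApproximatingProblem}, $u_\epsilon\in\G{m}$; and since the regularized coefficients and their traces at $0$ are bounded uniformly in $\epsilon$, while $\|u_{0,\epsilon}\|_{\Hh{m}}$ is bounded and $f,g$ are fixed, Theorem~\ref{TheoremAPrioriEstimates} applies with a threshold $\gamma_m$ independent of $\epsilon$ and gives $\sup_\epsilon\Gnorm{m}{u_\epsilon}<\infty$.

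Next I would identify the limit. Writing $L=L(A_0,\ldots,A_3,D)$ for the original operator, $u_\epsilon$ solves~\eqref{IBVP} with operator $L$, boundary value $g$, initial value $u_{0,\epsilon}$, and inhomogeneity $f+\tilde f_\epsilon$, where $\tilde f_\epsilon=(A_0-A_{0,\epsilon})\partial_t u_\epsilon+\sum_{j=1}^2(A_j-A_{j,\epsilon})\partial_j u_\epsilon+(D-D_\epsilon)u_\epsilon\in\Ltwoa$ (the normal term drops since $A_3=A_{3,\epsilon}=A_3^{\operatorname{co}}$). Hence $u_\epsilon-u$ is the weak solution of~\eqref{IBVP} with operator $L$, zero boundary value, initial value $u_{0,\epsilon}-u_0$, and inhomogeneity $\tilde f_\epsilon$, so the basic estimate~\eqref{EquationEllerEstimateInL2}, together with $A_{i,\epsilon}\to A_i$, $D_\epsilon\to D$ in $L^\infty(\Omega)$, $u_{0,\epsilon}\to u_0$ in $\Ltwoh$, and the uniform bound on $\partial_t u_\epsilon,\partial_j u_\epsilon,u_\epsilon$ in $L^\infty(J,\Ltwoh)$ (which follows from $\sup_\epsilon\Gnorm{m}{u_\epsilon}<\infty$ and $m\geq1$), shows $u_\epsilon\to u$ in $\G{0}$. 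Combining the uniform $\G{m}$--bound with weak-$*$ compactness in each $L^\infty(J,\Hh{m-j})$ and the distributional convergence $\partial_t^j u_\epsilon\to\partial_t^j u$, one obtains $\partial_t^j u\in L^\infty(J,\Hh{m-j})$ for $0\leq j\leq m$ with the bound of Theorem~\ref{TheoremAPrioriEstimates}; in particular $u\in\G{m-1}$. The promotion of this $L^\infty$--in--time statement to strong continuity in the top space, i.e.\ $u\in\G{m}$, is then obtained by the standard weak--strong continuity argument for symmetric hyperbolic systems (weak continuity in $\Hh{m-j}$ combined with continuity of $t\mapsto\|\partial_t^j u(t)\|_{\Hh{m-j}}$ via the energy identity).

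The main obstacle I expect is the bookkeeping behind the \emph{uniformity} of the a priori bound: one has to check that the regularized tuples meet the hypotheses of Theorem~\ref{TheoremAPrioriEstimates} with the radii $r,r_0$ (hence the constants $C_{m,0},C_m$ and the threshold $\gamma_m$) independent of $\epsilon$. This works because Lemma~\ref{LemmaApproximationOfCoefficients} provides exactly $L^\infty$--convergence of the coefficients, uniform higher-order bounds, and convergence of the low-order time traces at $0$ --- enough both to control the constants and to identify the limit via Lemma~\ref{LemmaEllerResult}, but deliberately not enough to run a Cauchy argument directly in $\G{m}$ (the higher coefficient derivatives need not converge), which is why one passes to the limit weakly and then invokes the continuity argument separately.
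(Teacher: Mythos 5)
The first half of your proposal matches the paper's step~I closely: you approximate the coefficients via Lemma~\ref{LemmaApproximationOfCoefficients}, preserve the compatibility conditions by perturbing the initial value via Lemma~\ref{LemmaExistenceOfApproximatingSequence}, apply Proposition~\ref{PropositionRegularityForApproximatingProblem} to get $u_\epsilon\in\G{m}$, obtain a uniform bound from Theorem~\ref{TheoremAPrioriEstimates}, identify $u_\epsilon\to u$ in $\G{0}$ through the basic estimate~\eqref{EquationEllerEstimateInL2}, and pass to the weak-$*$ limit. This is correct and yields $\partial^\alpha u\in L^\infty(J,\Ltwoh)$ for all $|\alpha|\leq m$, i.e.\ $u\in\Gvar{m}$, together with the claimed bound.

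The gap is in the final step, where you invoke ``the standard weak-strong continuity argument for symmetric hyperbolic systems (weak continuity combined with continuity of $t\mapsto\|\partial_t^j u(t)\|_{\Hh{m-j}}$ via the energy identity).'' That argument is standard for \emph{noncharacteristic} boundary problems, where the normal derivative is recovered algebraically from the equation, so the continuity of the full $H^m$-norm reduces to a tangential-derivative energy identity in which the boundary terms are controlled by the boundary condition. Here the boundary is characteristic: $\partial_3^k u$ for $k\geq 1$ is not read off the equation, the differentiated problem for $\partial_3^k u$ carries no usable boundary condition, and the only control on the normal derivative comes from Proposition~\ref{PropositionCentralEstimateInNormalDirection}, which is an integral (Gronwall) estimate exploiting the $\Div$-structure of the Maxwell operator, not a pointwise-in-time energy identity. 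Consequently there is no ``energy identity'' giving absolute continuity of $t\mapsto\|u(t)\|_{\Hh{m}}^2$; using Theorem~\ref{TheoremAPrioriEstimates} for this purpose would be circular, since it presupposes $u\in\G{m}$. The paper instead removes the tilde by a bootstrap: in steps~II and~III it differentiates in time, verifies the shifted compatibility conditions via Lemma~\ref{LemmaHigherOrderCompatibilityConditions} and identity~\eqref{EquationConstructionForCCWithSameDiffOp}, and then inducts on $k$ to show $\partial_t^{m-k}u\in\G{k}$ by interlocking Lemma~\ref{LemmaBasicRegularityInTime} (time direction), step~II of Lemma~\ref{LemmaRegularityInSpaceTangential} (tangential direction), and Lemma~\ref{LemmaRegularityInNormalDirection} (normal direction, using the translate--mollify--Gronwall machinery). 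This iterative regularization is essential and cannot be replaced by the weak-strong continuity shortcut you propose.
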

\begin{proof}
 I) Let $\{A_{i,\epsilon}\}_{\epsilon > 0}$ and $\{D_\epsilon\}_{\epsilon > 0}$ be the families of functions 
 given by Lemma~\ref{LemmaApproximationOfCoefficients} for $A_i$, $i \in \{0,1,2\}$,
 and $D$ respectively. In particular, the coefficients $A_{0,\epsilon}$, $A_{1,\epsilon}$, 
 $A_{2,\epsilon}$, and $D_\epsilon$ belong to $C^\infty(\overline{\Omega})$
 and $\partial_t A_{0,\epsilon}$ is contained in $\F{\tilde{m}}$ for each $\epsilon > 0$. Moreover, $A_{1,\epsilon}$ and 
 $A_{2,\epsilon}$ are independent of time for all $\epsilon > 0$ as $A_1$ and $A_2$ have this property.
 Lemma~\ref{LemmaExistenceOfApproximatingSequence} provides a parameter $\epsilon_0 > 0$ and a family $\{u_{0,\epsilon}\}_{ 0 < \epsilon < \epsilon_0} \subseteq \Hh{m}$ 
 such that $(0, A_{0,\epsilon}, A_{1,\epsilon}, A_{2,\epsilon}, A_3, D_\epsilon, B, f, g, u_{0,\epsilon})$ fulfill the compatibility conditions~\eqref{EquationCompatibilityConditionPrecised} 
 of order $m$  for all $\epsilon \in (0, \epsilon_0)$ and $u_{0,\epsilon} \rightarrow u_0$ in $\Hh{m}$ as $\epsilon \rightarrow 0$. 
 Let $u_\epsilon$ denote the weak solution of~\eqref{IBVP} with differential operator $L(A_{0,\epsilon},A_{1,\epsilon}, A_{2,\epsilon}, A_3, D_\epsilon)$ 
 and inhomogeneity $f$, boundary value $g$, and initial value $u_{0,\epsilon}$ for each $\epsilon \in (0, \epsilon_0)$.
 By Proposition~\ref{PropositionRegularityForApproximatingProblem}, the function $u_\epsilon$ belongs 
 to $\G{m}$ for all $\epsilon \in (0, \epsilon_0)$. Take $r > 0$ such that 
 \begin{align*}
  \Fnorm{\tilde{m}}{A_i} \leq r \quad \text{and} \quad \Fnorm{\tilde{m}}{D} \leq r
 \end{align*}
 for all $i \in \{0, \ldots, 3\}$. Due to Lemma~\ref{LemmaApproximationOfCoefficients} we then also have
 \begin{align*}
  \Fnorm{\tilde{m}}{A_{i,\epsilon}} \leq C r \quad \text{and} \quad \Fnorm{\tilde{m}}{D_\epsilon} \leq C r
 \end{align*}
 for all $\epsilon \in (0, \epsilon_0)$ and $i \in \{0,1,2\}$. 
 Theorem~\ref{TheoremAPrioriEstimates} then yields a constant $C = C(\eta, r,T)$
 and a number $\gamma = \gamma(\eta, r,T)$ 
 such that
 \begin{align}
 \label{EquationEstimateForApproximatingSolutions}
  \Gnorm{m}{u_\epsilon}^2 \leq C\Big(&\sum_{j = 0}^{m-1} \Hhn{m-1-j}{\partial_t^j f(0)}^2 + \Enorm{m}{g}^2 + \Hhn{m}{u_{0,\epsilon}}^2 \nonumber \\
  &\hspace{3em} + \frac{1}{\gamma} \Hangamma{m}{f}^2 \Big)
 \end{align}
 for all $\epsilon \in (0, \epsilon_0)$.
 Let $(\epsilon_n)$ be a sequence 
 of positive numbers converging to zero. Then~\eqref{EquationEstimateForApproximatingSolutions} and 
 $u_{0,\epsilon} \rightarrow u_0$ in $\Hh{m}$ as $\epsilon \rightarrow 0$ yield that 
 $(\partial^\alpha u_{\epsilon_n})$ is bounded in $L^\infty(J, L^2(\R^3_+)) 
 = (L^1(J, L^2(\R^3_+)))^*$ for each $\alpha \in \N_0^4$ with $|\alpha| \leq m$. 
 Since $L^1(J, L^2(\R^3_+))$ is separable, the Banach-Alaoglu theorem gives a $\sigma^*$-convergent subsequence.
 Taking iteratively subsequences for each $\alpha \in \N_0^4$ with $|\alpha| \leq m$, we obtain a subsequence, 
 denoted by $(u_n)$, such that the $\sigma^*$-limit
 $u_\alpha$ of $\partial^\alpha u_n$ exists for all $\alpha \in \N_0^4$ 
 with $|\alpha| \leq m$.
 Lemma~\ref{LemmaEllerResult} and Lemma~\ref{LemmaApproximationOfCoefficients} imply that
 \begin{align*}
  \Gnorm{0}{u_n - u} &\leq C(\Gnorm{0}{L(A_0,\ldots, A_3, D) u_{n} - f}^2 + \Ltwohn{u_{0,n} - u_0}^2) \\
			&\leq C \Big(\sum_{i = 0}^2 \|A_i - A_{i,n}\|_{L^\infty(\Omega)}^2 \Gnorm{0}{\partial_i u_{n}}^2 \\
			    &\qquad + \|D - D_{n}\|_{L^\infty(\Omega)}^2 \Gnorm{0}{u_n}^2
			    + \Ltwohn{u_{0,n} - u_0}^2\Big) \longrightarrow 0
 \end{align*}
 as $n \rightarrow \infty$, where we also exploited that $f = L(A_{0,n},A_{1,n}, A_{2,n}, A_3, D_{n})u_n$, \eqref{EquationEstimateForApproximatingSolutions}, 
 and that $(u_{0,n})_n$ is bounded in $\Hh{m}$. 
 Consequently, $u$ is equal to $u_{(0,0,0,0)}$. Looking at the distributional derivative, 
 we further deduce
 \begin{align*}
  \langle \phi, \partial^\alpha u \rangle = (-1)^{|\alpha|} \langle \partial^\alpha \phi, u \rangle 
      = (-1)^{|\alpha|} \lim_{n \rightarrow \infty} \langle \partial^\alpha \phi, u_n \rangle = \langle \phi, u_\alpha \rangle
 \end{align*}
 for all $\phi \in C_c^\infty(\Omega)$. We conclude that $\partial^\alpha u \in L^\infty(J, \Ltwoh)$ for all $\alpha 
 \in \N_0^4$ with $|\alpha| \leq m$; i.e., $u \in \Gvar{m}$. It remains to remove the tilde here.
 
 II) Let $0 \leq j \leq m-1$. Differentiating the differential equation and 
 the boundary condition and employing~\eqref{EquationTimeDerivativesOfSolution}, we see that
 $\partial_t^j u$ solves  the initial boundary value problem,
 \begin{align*}
  \left\{\begin{aligned}
   L(A_0, \ldots, A_3, D) v  &= f_j, \quad &&x \in \R^3_+, \quad &t \in J; \\
   B v &= \partial_t^j g, \quad &&x \in \partial \R^3_+, &t \in J; \\
   v(0) &= S_{m,j}(0, A_0, \ldots, A_3, D, f, u_0), \quad &&x \in \R^3_+;
\end{aligned}\right.
 \end{align*}
 where 
 \begin{equation*}
   f_j = \partial_t^j f - \sum_{l = 1}^j \binom{j}{l} 
    \Big(\partial_t^l A_0 \partial_t^{j+1-l} u + \partial_t^l D \partial_t^{j-l} u\Big)
 \end{equation*}
 belongs to $\Ha{m-j}$ by Lemma~\ref{LemmaRegularityForA0}.
 We want to apply Lemma~\ref{LemmaBasicRegularityInTime} to $\partial_t^j u$.
 Therefore, the tuple $(0, A_0, \ldots, A_3, D, B, f_j, \partial_t^j g, u_0^j)$ has to satisfy
 the compatibility conditions~\eqref{EquationCompatibilityConditionPrecised} 
 of order $m - j$, where we abbreviate  $S_{m,j}(0,A_0,\ldots, A_3, D,f,u_0)$ by $u_0^j$ for all $0 \leq j \leq m$. 
  
 Similar to Lemma~\ref{LemmaHigherOrderCompatibilityConditions}, an induction with 
 a straightforward calculation in the induction step yields
  \begin{align}
 \label{EquationConstructionForCCWithSameDiffOp}
  S_{m-m_1,m_2}(0,A_0,\ldots, A_3,D, f_{m_1}, u_0^{m_1}) = S_{m, m_1 + m_2}(0,A_0,\ldots, A_3, D,f, u_0)
 \end{align}
 for all $m_1, m_2 \in \N_0$ with $m_2 \leq m_1$ and $m_1 + m_2 \leq m-1$. We once more refer to~\cite{SpitzDissertation}, 
 see step~III) of the proof of Theorem~4.12 there, 
 for the details.
 Note that this identity implies that
 \begin{align*}
  B S_{m-m_1,m_2} (0,A_0,\ldots, A_3,D, f_{m_1}, u_0^{m_1}) &= B S_{m, m_1 + m_2}(0,A_0,\ldots, A_3, D,f, u_0) \\
  &= \partial_t^{m_1 + m_2} g(0) 
  = \partial_t^{m_2} (\partial_t^{m_1} g)(0)
 \end{align*}
 on $\partial \R^3_+$ for all $m_2 \leq m  - m_1 -1$, as the tuple $(0, A_0, \ldots, A_3, D, B, f, g, u_0)$ fulfills the compatibility conditions 
 of order $m$ by assumption. We infer that the 
 tuple $(0, A_0, \ldots, A_3, D, B, f_{m_1} , \partial_t^{m_1} g, u_0^{m_1})$ fulfills the compatibility conditions~\eqref{EquationCompatibilityConditionPrecised} of order 
 $m - m_1$.
 
  III) Step II) applied with $j = m - 1$ shows that $\partial_t^{m-1} u$ solves~\eqref{IBVP} with inhomogeneity
  $f_{m-1} \in \Ha{1}$, boundary value $\partial^{m-1}_t g \in \E{1}$, 
  and initial value $u_0^{m-1} \in \Hh{1}$. The tuple $(0, A_0, \ldots, A_3, D, B, f_{m-1}, \partial_t^{m-1} g, u_0^{m-1})$ fulfills the compatibility
  conditions~\eqref{EquationCompatibilityConditionPrecised} of order $1$ by step II). Next take an open subinterval 
  $J'$ of $J$. Assume that $\partial_t^{m-1} u$ belongs to $C^1(\overline{J'}, \Ltwoh)$. 
  As we already know that $\partial_t^{m-1} u$ belongs to $L^\infty(J, \Hhta{1})$, 
  we can argue as in step II) of the proof of Lemma~\ref{LemmaRegularityInSpaceTangential} 
  to infer that $\partial_t^{m-1}u$ is an element of $C(\overline{J'}, \Hhta{1})$. (Note 
  that the smoothness of the coefficients is not used in that step of the proof of 
  Lemma~\ref{LemmaRegularityInSpaceTangential}.) Lemma~\ref{LemmaRegularityInNormalDirection} then implies that $\partial_t^{m-1} u$
  is contained in $G_1(J' \times \R^3_+)$.
  Lemma~\ref{LemmaBasicRegularityInTime} thus yields
  that $\partial_t^{m-1} u$ belongs to $C^1(\overline{J},\Ltwoh)$; i.e., $u \in C^m(\overline{J},\Ltwoh)$. 
  The previous arguments applied with $J' = J$ now imply that $\partial_t^{m-1}u$ is an element of 
  $\G{1}$.
  
  Next assume that we have proven that $\partial_t^{m-k} u$ is an element of $\G{k}$ for some $k \in \{1,\ldots,m-1\}$. Then $\partial_t^{m-k-1}u$ 
  belongs to 
\begin{equation*}  
  \bigcap_{l=0}^{k} C^{l+1}(\overline{J}, \Hh{k-l}) = \bigcap_{l = 1}^{k+1} C^l(\overline{J},\Hh{k + 1 - l}).
\end{equation*}
  Observe that $\partial_t^{m-k-1} u$ solves~\eqref{IBVP} with inhomogeneity $f_{m - k - 1} \in \Ha{k + 1}$, boundary value 
  $\partial_t^{m-k-1} g \in \E{k+1}$, and initial 
  value $u_0^{m-k-1} \in \Hh{k + 1}$ by  
  step~II). Arguing as before, i.e., combining Lemma~\ref{LemmaBasicRegularityInTime} 
  with step II) of Lemma~\ref{LemmaRegularityInSpaceTangential} and Lemma~\ref{LemmaRegularityInNormalDirection}, 
  we derive that $\partial_t^{m-k-1} u$ belongs to $C(\clJ,\Hh{k+1})$ and thus to $\G{k+1}$. 
  
  By induction we arrive at $\partial_t^{m-k} u \in \G{k}$ for all $k \in \{0, \ldots, m\}$. With $k = m$ we finally 
  obtain $u \in \G{m}$.
 \end{proof}
 
 \textbf{Proof of Theorem~\ref{TheoremMainResultOnDomain}:} Combining Theorems~\ref{TheoremAPrioriEstimates} and~\ref{TheoremRegularityOfSolution}, we derive
 the assertion of Theorem~\ref{TheoremMainResultOnDomain} for $G = \R^3_+$. The localization 
 procedure from Section~\ref{SectionLocalizationAndFunctionSpaces}, see Remark~\ref{RemarkAboutLocalization}, 
 then yields Theorem~\ref{TheoremMainResultOnDomain} for coefficients constant outside of a compact set. 
 
 Once the regularity theory has been established for coefficients constant outside of a 
   compact set, another approximation procedure extends the results to coefficients $A_0$ and $D$ 
   which merely have a limit as $|(t,x)| \rightarrow \infty$. We refer to~\cite[Theorem~4.13]{SpitzDissertation} for details.
 \hfill $\qed$

 \begin{rem}
  Not only the main result extends to coefficients $A_0$ and $D$ with a limit as $|(t,x)| \rightarrow \infty$, 
  but also all the intermediate results. In particular, Proposition~\ref{PropositionCentralEstimateInNormalDirection}, 
  Theorem~\ref{TheoremAPrioriEstimates}, and Theorem~\ref{TheoremRegularityOfSolution} are still
  true if $A_0$ and $D$ only have a limit as $|(t,x)| \rightarrow \infty$, see~\cite[Theorem~4.13]{SpitzDissertation}.
 \end{rem}
 
 \vspace{2em}
 \textbf{Acknowledgment:}
 I want to thank my advisor Roland Schnaubelt for stimulating discussions concerning this research 
and helpful suggestions during the preparation of this article. Moreover, I gratefully acknowledge 
financial support by the Deutsche Forschungsgemeinschaft (DFG) through CRC 1173.

\vspace{1em} 

\emergencystretch = 1em
\printbibliography
\vspace{2em}
\end{document}